\tikzset{>= angle 60}
\theoremstyle{plain}
\newtheorem{theorem}{Theorem}[section]
\newtheorem{lemma}[theorem]{Lemma}
\newtheorem{corollary}[theorem]{Corollary}
\newtheorem{proposition}[theorem]{Proposition}
\theoremstyle{definition}
\newtheorem{definition}[theorem]{Definition}
\theoremstyle{remark}
\newcommand{\R}{{\mathbb{R}}}
\newcommand{\cqd}{\hfill $\square$} 
\title[Piecewise $\lambda$-affine maps ]{Topological dynamics of piecewise $\lambda$-affine maps}
\subjclass[2000]{Primary 37E05 Secondary 37C20, 37E15}
\keywords{Topological dynamics, piecewise contraction,
  periodic orbit, iterated function system}
\begin{document}

\maketitle

\centerline{\scshape Arnaldo Nogueira\footnote{Partially
  supported by BREUDS.}}
\smallskip
{\footnotesize
 \centerline{Aix-Marseille Universit\'e, CNRS, Centrale Marseille, I2M, UMR 7373, 13453 Marseille, France}
 \centerline{arnaldo.nogueira@univ-amu.fr}} 

\medskip

\centerline{\scshape Benito Pires \footnote{Partially supported by FAPESP 2015/20731-5 and CNPq 303731/2013-3.} and Rafael A. Rosales}
\smallskip

{\footnotesize
 \centerline{Departamento de Computa\c c\~ao e Matem\'atica, Faculdade de Filosofia, Ci\^encias e Letras}
 \centerline {Universidade de S\~ao Paulo, 14040-901, Ribeir\~ao Preto - SP, Brazil}
   \centerline{benito@usp.br, rrosales@usp.br} }

\marginsize{2.5cm}{2.5cm}{1cm}{2cm}
  \begin{abstract} Let $-1<\lambda<1$ and $f:[0,1)\to\R$ be a piecewise $\lambda$-affine map, that is, there exist points $0=c_0<c_1<\cdots <c_{n-1}<c_n=1$ and real numbers
  $b_1,\ldots,b_n$ such that $f(x)=\lambda x+b_i$ for every $x\in [c_{i-1},c_i)$. We prove that, for Lebesgue almost every $\delta\in\R$, the map $f_{\delta}=f+\delta\,({\rm mod}\,1)$
  is asymptotically periodic. More precisely, $f_{\delta}$ has at most $2n$ periodic orbits and the $\omega$-limit set of every $x\in [0,1)$ is a periodic orbit.         \end{abstract}

\section{Introduction}

 Let $I=[0,1)$ and $-1<\lambda<1$. We say that $f:I\to \R$ is an  {\it $n$-interval piecewise $\lambda$-affine map} if there exist points $0=c_0<c_1<\cdots <c_{n-1}<c_n=1$  and  real numbers 
 $b_1,\ldots, b_n$  
such that  
$f(x)=\lambda x + b_i$ for every $x\in [c_{i-1},c_i)$ and $1\le i\le n$. We are interested in studying the topological dynamics of the one-parameter family of piecewise $\lambda$-affine contractions (see Figure \ref{figthm1})
\begin{equation}\label{fdelta}
 f_{\delta}=f + \delta\,({\rm mod}\,1), \quad \delta\in\R.
\end{equation}

The case in which $0<\lambda<1$ and $f$ is the continuous map $x\mapsto \lambda x$ was explicitly considered   by, among others, Y. Bugeaud  \cite{YB}, Y. Bugeaud  and J-P. Conze \cite{YBC}, R. Coutinho \cite{RC} and P. Veerman \cite{PV}  using a rotation number approach. It is known that for each $\delta\in\R$, the $\omega$-limit set $\omega_{f_{\delta}}(x)=\bigcap_{m\ge 0}\overline{\bigcup_{k\ge m}\{ f_\delta^k(x)\}}$ is the same set for every $x\in I$: either a finite set or a Cantor set. The second situation happens for a non-trivial Lebesgue null set of parameters $\delta$.

Here we consider the general case where $f$ is any piecewise $\lambda$-affine contraction having finitely many discontinuities. Beyond the difficulty brought by the presence of discontinuities, we also have  to deal with the possible lack of injectivity of the map, which rules out any approach based on the  theory of rotation numbers. Differently from the $\delta$-parameter family
$x\mapsto \lambda x +\delta\,({\rm mod}\,1)$, the dynamics of the general case allows the coexistence of several attractors of finite cardinality together with several Cantor sets. In other words, $\omega_{f_{\delta}}(x)$ may depend on $x$. 

Given $f:I\to I$ and $x\in I$, if there exists $k\geq1$ such that $f^k(x)=x$, we say that  the {\it $f$-orbit} of $x$, $O_f(x)=\bigcup_{k\ge 0}\{ f^k(x)\}$,  is a {\it periodic orbit}.
We say that $f$ is {\it asymptotically periodic} if $\omega_f(x)$ is a periodic orbit for every $x\in I$. 

Our first result is the following. 

\begin{theorem}\label{abcd} Let $-1<\lambda<1$ and $f:I\to\R$ be an $n$-interval piecewise $\lambda$-affine map, then, for Lebesgue almost every $\delta\in\R$, the map $f_{\delta}=f+\delta\,({\rm mod}\,1)$ is asymptotically periodic and has at most $2n$ periodic orbits.
\end{theorem}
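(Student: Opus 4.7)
My plan has three parts. First, I would observe that $f_\delta$ is itself piecewise $\lambda$-affine with at most $2n$ intervals of continuity: on each original piece $[c_{i-1},c_i)$, the affine branch $x\mapsto \lambda x+b_i+\delta$ has range of length $|\lambda|(c_i-c_{i-1})<1$, hence crosses at most one integer, so reducing modulo $1$ introduces at most one new discontinuity per piece.

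Second, I would invoke the following contraction principle: if $x,y\in I$ have the same itinerary under $f_\delta$, meaning $f_\delta^k(x)$ and $f_\delta^k(y)$ lie in the same continuity interval for every $k$, then $|f_\delta^k(x)-f_\delta^k(y)|\leq|\lambda|^k|x-y|\to 0$, so they share the same $\omega$-limit set. This shows each symbolic cylinder is a (possibly degenerate) interval, and the basin of any periodic orbit is a union of such intervals bounded by discontinuities and their preimages. An elementary accounting of these interval endpoints yields the bound of at most $2n$ periodic orbits, since each basin must contain at least one interval of continuity of $f_\delta$.

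Third and most important, to establish asymptotic periodicity for a.e.\ $\delta$, it suffices, again by the contraction principle, to show that the forward orbits under $f_\delta$ of the at most $4n$ one-sided limit values $y^{\pm}$ at discontinuities of $f_\delta$ are eventually periodic. I would prove this by a Fubini argument, stratifying $\delta$-space by finite itinerary prefixes of these values. Along any such prefix of length $k$, both $f_\delta^k(y^{\pm})$ and each discontinuity $d(\delta)$ depend affinely on $\delta$, with derivatives $(1-\lambda^k)/(1-\lambda)$ and either $0$ or $-1/\lambda$ respectively, so their difference has derivative bounded away from zero for large $k$. Hence the $\delta$-set on which a non-eventually-periodic orbit of some $y^{\pm}$ comes within $C|\lambda|^k$ of some discontinuity has measure $O(|\lambda|^k)$; summing via Borel--Cantelli over $k$ and over the finitely many index pairs yields a null set of bad $\delta$.

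The main obstacle is this last step: the itineraries of the discontinuity values themselves depend on $\delta$, so one must carefully partition $\delta$-space according to these itineraries before applying the transversality estimate, and then sum the measure bounds over a countable but well-controlled family of strata. A further subtlety, absent from the classical continuous case $x\mapsto \lambda x+\delta\,({\rm mod}\,1)$, is the potential non-injectivity of $f_\delta$, which complicates both the interval-endpoint bookkeeping in Step 2 and the control of preimages needed to rule out non-trivial Cantor $\omega$-limit sets.
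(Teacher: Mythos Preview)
Your approach is fundamentally different from the paper's. Rather than a direct Borel--Cantelli argument in $\delta$, the paper uses the Reduction Lemma (Lemma~\ref{redl}): $\bar f_\delta$ is conjugate to the piecewise contraction with \emph{shifted discontinuity points} $x_i=c_i-\delta/(1-\lambda)$, thereby reducing Theorem~\ref{abcd} to the IFS result Theorem~\ref{main}. That theorem is proved by showing that for a highly contractive IFS the attractor $\bigcap_k A_k$ has measure zero (Section~\ref{IFS}), then reducing the general case to this one by a local truncation trick (Section~\ref{epi}) and passage to an iterate $f^k$ with $\rho^k<\frac12$ (Section~\ref{gc}); the periodic-orbit bound is a separate combinatorial argument on the invariant quasi-partition (Section~\ref{upperb}). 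Your transversality scheme never appears.

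The substantive gap in your sketch is the measure estimate in Step~3. The derivative computation on a fixed itinerary stratum is correct, but the function $\delta\mapsto f_\delta^k(y^\pm)$ is in general \emph{not monotone}: whenever some earlier iterate $f_\delta^j(y^\pm)$ crosses a discontinuity $c_i$, the value jumps by $b_{i+1}-b_i\pmod 1$, and the subsequent itinerary can differ completely on the two sides, so the jump at depth $k$ need not be $O(|\lambda|^{k-j})$. Consequently the number of strata at depth $k$ may grow like $(2n)^k$, and summing your $O(|\lambda|^k)$ per-stratum bound yields only $O\big((2n|\lambda|)^k\big)$, which is not summable unless $2n|\lambda|<1$. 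You correctly flag the stratification as the main obstacle but offer no mechanism to control it; the paper's highly-contractive replacement and the passage to an iterate are precisely the devices that tame this combinatorial blow-up in their framework. A smaller point: Step~2's claim that each basin contains a full continuity interval is not immediate and is essentially what Section~\ref{upperb} is devoted to establishing via the quasi-partition.
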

In the statement of Theorem \ref{abcd}, the bound $2n$ for the number of periodic orbits is sharp: in fact, for $n=1$ and $f:x\mapsto -\frac{x}{2}+\frac{1}{4}$ we have that $f_{\delta}$ is the map
$x\mapsto -\frac{x}{2}+\frac{1}{4} +\delta\,({\rm mod}\,1)$, which has two periodic orbits for every $\delta$ small enough.  However,
the bound $2n$ can be replaced by $n$, if in $(\ref{fdelta})$ the map $f$ satisfies $f(I)\subset (0,1)$. Besides, the claim of Theorem \ref{abcd} holds for $I=\R$, $f_{\delta}=f+\delta$ and the bound $n$ in the place of $2n$. Observe that being asymptotically periodic is stronger than saying that $\omega_{f_{\delta}}(x)$ is a finite set for every $x\in I$. Due to the arguments in the proof of Lemma \ref{redl}, our approach to prove Theorem \ref{abcd} requires $f$ to be a constant slope map .

\begin{figure}[h]
 \begin{center}

\newcommand*{\slb}{0.03}%
\newcommand*{\dlt}{0.111111}%
\newcommand*{\yone}{5/18}
\newcommand*{\ytwo}{13/18}
\newcommand*{\lwd}{1.2pt}
\newcommand*{\dONE}{2/9}
\newcommand*{\dTWO}{14/25}

\begin{tikzpicture}[scale=3.95]
\begin{scope}[shift={(0,0)}]
  \draw 
     (0-\slb,0-\slb) -- (1+\slb,0-\slb) -- (1+\slb,1+\slb) 
       -- (0-\slb,1+\slb) -- cycle;
  \draw[line width=0.25pt, densely dashed] 
      (0,0) -- (1,0) -- (1,1) -- (0,1) -- cycle;
   \draw[line width=\lwd] (0,1/3) --   (0.3, 0.48333);
   \draw[line width=\lwd] (0.3,2/3) -- (0.6,1/2*3/10+2/3);
   \draw[line width=\lwd] (0.6,2/5) -- (0.9,1/2*3/10+2/5);
   \draw[line width=\lwd] (0.9,1/10) -- (1,1/2*1/10+1/10);
   \foreach \x in {0.3, 0.6, 0.9} {
     \draw[line width=0.4pt, dotted]  (\x,0) -- (\x,1);}
   \draw[fill=black] (0,1/3) circle (0.4pt);
   \draw[fill=white] (0.3, 0.48333) circle (0.4pt);
   \draw[fill=black] (0.3,2/3) circle (0.4pt);
   \draw[fill=white] (0.6,1/2*0.3+2/3) circle (0.4pt);
   \draw[fill=black] (0.6,2/5) circle (0.4pt);
   \draw[fill=white] (0.9,1/2*0.3+2/5) circle (0.4pt);
   \draw[fill=black] (0.9,0.1) circle (0.4pt);
   \draw[fill=white] (1,1/2*0.1+0.1) circle (0.4pt);
   \foreach \x in {0, 0.3, 0.6, 0.9, 1} {
      \draw (\x, -\slb) -- (\x, -\slb-0.02);}
   \foreach \x in {0, 1} {
      \draw (-\slb, \x) -- (-\slb-0.02, \x);}
  \node at (0, -0.1) {0};
  \node at (1, -0.1) {1};
  \node at (-0.1, 0) {0};
  \node at (-0.1, 1) {1};
  \node at (0.3, -0.1) {\small $c_1$};
  \node at (0.6, -0.1) {\small $c_2$};
  \node at (0.9, -0.1) {\small $c_3$};
  \node at (1/2, -0.3) {\small $f$ with $\lambda=\frac{1}{2}$, $n=4$};
 \end{scope}
\begin{scope}[shift={(1.4,0)}]
  \draw 
     (0-\slb,0-\slb) -- (1+\slb,0-\slb) -- (1+\slb,1+\slb) 
       -- (0-\slb,1+\slb) -- cycle;
  \draw[line width=0.25pt, densely dashed] 
      (0,0) -- (1,0) -- (1,1) -- (0,1) -- cycle;
   \draw[line width=\lwd] (0,1/3+\dONE) --   (0.3, 0.48333+\dONE);
   \draw[line width=\lwd] (0.3,2/3+\dONE) -- (0.5222,1); 
   \draw[line width=\lwd] (0.5222,0) -- (0.6,0.0389); 
   \draw[line width=\lwd] (0.6,2/5+\dONE) -- (0.9,1/2*0.3+2/5+\dONE);
   \draw[line width=\lwd] (0.9,0.1+\dONE) -- (1,1/2*0.1+0.1+\dONE);
   \foreach \x in {0.3, 0.5222222, 0.6, 0.9} {
      \draw[line width=0.4pt, dotted]  (\x,0) -- (\x,1);}
   \draw[fill=black] (0,1/3+\dONE) circle (0.4pt);
   \draw[fill=white] (0.3, 0.48333+\dONE) circle (0.4pt);
   \draw[fill=black] (0.3,2/3+\dONE) circle (0.4pt);
   \draw[fill=white] (0.5222,1) circle (0.4pt);
   \draw[fill=black] (0.5222,0) circle (0.4pt);
   \draw[fill=white] (0.6,0.0389) circle (0.4pt);
   \draw[fill=black] (0.6,2/5+\dONE) circle (0.4pt);
   \draw[fill=white] (0.9,1/2*0.3+2/5+\dONE) circle (0.4pt);
   \draw[fill=black] (0.9,0.1+\dONE) circle (0.4pt);
   \draw[fill=white] (1,1/2*0.1+0.1+\dONE) circle (0.4pt);      
   \foreach \x in {0, 0.3, 0.5222222, 0.6, 0.9, 1} {
      \draw (\x, -\slb) -- (\x, -\slb-0.02);}
   \foreach \x in {0, 1} {
      \draw (-\slb, \x) -- (-\slb-0.02, \x);}
  \node at  (0, -0.1) {0};
  \node at (1, -0.1) {1};
  \node at  (-0.1, 0) {0};
  \node at (-0.1, 1) {1};
  \node at (0.3, -0.1) {\small $c_1$};
  \node at (0.5222222, -0.1) {\small $c_2$};
  \node at (0.62, -0.1) {\small $c_3$};
  \node at (0.9, -0.1) {\small $c_4$};  
  \node at (1/2, -0.3) {\small $f _\delta$ with $\delta= \frac{2}{5}$};
 \end{scope} 
\begin{scope}[shift={(2.8,0)}]
  \draw 
     (0-\slb,0-\slb) -- (1+\slb,0-\slb) -- (1+\slb,1+\slb) 
       -- (0-\slb,1+\slb) -- cycle;
  \draw[line width=0.25pt, densely dashed] 
      (0,0) -- (1,0) -- (1,1) -- (0,1) -- cycle;
   \draw[line width=\lwd] (0,1/3+\dTWO) --   (16/75, 1);
   \draw[line width=\lwd] (16/75,0) --   (0.3, 13/300); 
   \draw[line width=\lwd] (0.3,17/75) -- (0.6,113/300);
   \draw[line width=\lwd] (0.6,2/5+\dTWO) -- (17/25,1);
   \draw[line width=\lwd] (17/25,0) --   (0.9,11/100); 
   \draw[line width=\lwd] (0.9,0.1+\dTWO) -- (1,1/2*0.1+0.1+\dTWO);
   \foreach \x in {16/75, 0.3, 0.6, 17/25, 0.9} {
     \draw[line width=0.4pt, dotted]  (\x,0) -- (\x,1);}
   \draw[fill=black] (0,1/3+\dTWO) circle (0.4pt);
   \draw[fill=white] (16/75, 1) circle (0.4pt);
   \draw[fill=black] (16/75,0) circle (0.4pt);
   \draw[fill=white] (0.3, 13/300) circle (0.4pt);
   \draw[fill=black] (0.3,17/75) circle (0.4pt);
   \draw[fill=white] (0.6,113/300) circle (0.4pt);
   \draw[fill=black] (0.6,2/5+\dTWO) circle (0.4pt);
   \draw[fill=white] (17/25,1) circle (0.4pt);
   \draw[fill=black] (17/25,0) circle (0.4pt);
   \draw[fill=white] (0.9,11/100) circle (0.4pt);      
   \draw[fill=black] (0.9,0.1+\dTWO) circle (0.4pt);
   \draw[fill=white] (1,1/2*0.1+0.1+\dTWO) circle (0.4pt);      
   \foreach \x in {0, 16/75, 0.3, 0.6, 17/25, 0.9, 1} {
      \draw (\x, -\slb) -- (\x, -\slb-0.02);}
   \foreach \x in {0, 1} {
      \draw (-\slb, \x) -- (-\slb-0.02, \x);}
  \node at  (0, -0.1) {0};
  \node at (1, -0.1) {1};
  \node at  (-0.1, 0) {0};
  \node at (-0.1, 1) {1};
  \node at  (16/75, -0.1) {\small $c_1$};
  \node at (0.3+0.02, -0.1) {\small $c_2$};
  \node at (0.6, -0.1) {\small $c_3$};  
  \node at  (17/25+0.02, -0.1) {\small $c_4$};
  \node at (0.9, -0.1) {\small $c_5$};
  
  \node at (1/2, -0.3) {\small $f_\delta$ with $\delta=\frac{14}{25}$};
 \end{scope}
\end{tikzpicture}
  \caption{The family $f_{\delta}=f+\delta$ ({\rm mod}\,1).}\label{figthm1}
 \end{center}
\end{figure}
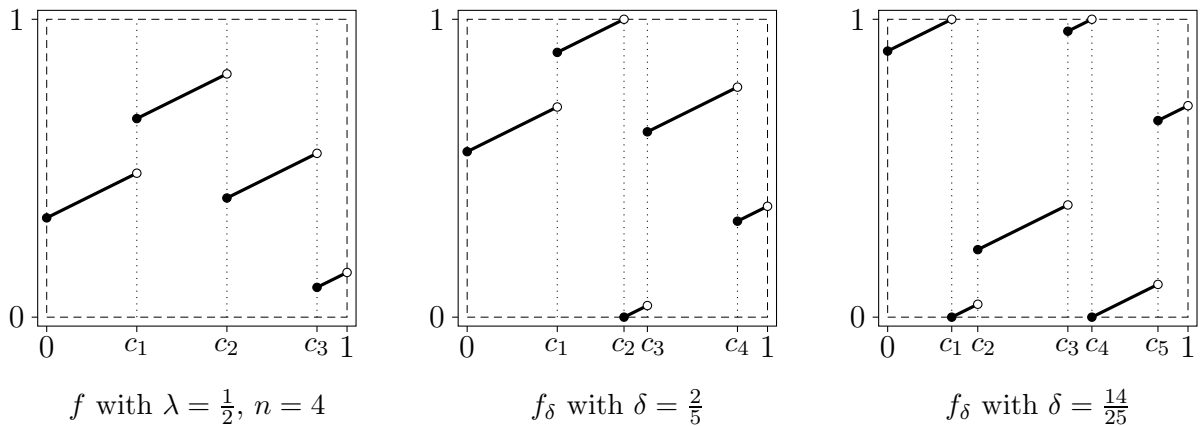

Maps of constant slope are important because many piecewise smooth interval maps are topologically conjugate or semiconjugate to them. In this regard, J. Milnor and W. Thurston \cite{MT}  proved  that any continuous piecewise monotone interval map of positive entropy ${\rm htop}\,(T)$ is topologically semiconjugate to a map whose slope in absolute value equals $e^{{\rm htop}}$. This result was generalised by L. Alsed\`a and M. Misiurewicz \cite{AM} to piecewise continuous piecewise monotone interval maps of positive entropy. Concerning countably piecewise continuous piecewise monotone interval maps, a necessary and sufficient condition for the existence of a non-decreasing semiconjugacy to a map of constant slope was provided by
  M. Misiurewicz and S. Roth  \cite{MR}.  A. Nogueira and B. Pires  \cite{NP} proved  that every injective piecewise contraction is topologically conjugate to a  map whose slope in absolute value equals $\frac12$. It is worth observing that the types of maps we consider here appear in the field of diophantine approximation (see \cite{YB2}). Concerning the dynamics of piecewise contractions, we refer the reader to \cite{BD,CGMU,GT}.

Theorem \ref{abcd} can be reduced to Theorem \ref{main}, a much more general result. To state it, we need some additional notation. Let $I=[0,1)$ or $I=\R$. Denote by $\overline{I}$ and $\mathring{I}$, respectively, the closure and the interior of $I$. We say that $\Phi=\{\phi_1,\ldots,\phi_n\}$, $n\ge 2$,  is an {\it Iterated Function System} (IFS)  defined on $I$ if each map $\phi_i:\overline{I}\to\mathring{I}$ is a Lipschitz contraction. Set $\inf \,\R=-\infty$, $\sup \,\R=\infty$ and 
$$
\Omega_{n-1}=\Omega_{n-1}(I)=\{(x_1,\ldots,x_{n-1}): \inf I<x_1<\cdots <x_{n-1}<\sup I\}.
$$ 
For each $(x_1,\ldots,x_{n-1})\in\Omega_{n-1}(I)$, let 
$f_{\phi_1,\ldots,\phi_n,x_1,\ldots,x_{n-1}}: I \to I$ be the {\it $n$-interval piecewise contraction $($PC$)$}  defined by
\begin{equation}\label{mapf}
 f_{\phi_1,\ldots,\phi_n,x_1,\ldots,x_{n-1}}(x)=
 \begin{cases}
 \phi_1(x)\quad\textrm{if}\quad x\in I\cap (-\infty,x_1)\\
 \phi_i(x)\quad\textrm{if}\quad x\in [x_{i-1},x_i),\quad 2\le i\le n-1\,.\\
\phi_n(x)\quad\textrm{if}\quad x\in I\cap [x_{n-1},\infty) \end{cases}
\end{equation} 
All measure-theoretical statements hereafter concern the Lebesgue measure denoted by $\mu$. In particular, $W_{\Phi}\subset I$ is a {\it full set} if $\mu(I\setminus W_{\Phi})=0$.
  
\begin{theorem}\label{main} Let $I=[0,1)$ or $I=\R$. Let $\Phi=\{\phi_1,\ldots,\phi_n\}$ be an IFS defined on $I$, then there exists a full set $W_{\Phi}\subset I$ such that
for  every $(x_1,\ldots,x_{n-1})\in\Omega_{n-1}(I)\cap W_{\Phi}^{n-1}$, the $n$-interval PC $f_{\phi_1,\ldots,\phi_n,x_1,\ldots,x_{n-1}}$ defined by $($\ref{mapf}$)$ is  asymptotically periodic and has at most $n$ periodic orbits.
\end{theorem}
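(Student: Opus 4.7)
The plan is to prove Theorem \ref{main} by establishing two independent components: (i) a pointwise bound of $n$ on the number of periodic orbits valid for every choice of discontinuities, and (ii) asymptotic periodicity for a generic choice of discontinuities.

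For the bound on the number of periodic orbits, I would adapt the standard contraction argument for piecewise contractions. Suppose $f := f_{\phi_1, \ldots, \phi_n, x_1, \ldots, x_{n-1}}$ has $m$ distinct periodic orbits $O_1, \ldots, O_m$, ordered so that $\min O_1 < \cdots < \min O_m$. I would show that between consecutive minima $\min O_j$ and $\min O_{j+1}$ there lies at least one discontinuity: otherwise both values belong to a common continuity piece $[x_{i-1}, x_i)$, and applying $\phi_i$ and iterating, the Lipschitz contraction would force either an eventual separation by some discontinuity (which, traced back, yields a separating discontinuity specific to the pair) or a geometric collapse of the distance, contradicting the disjointness of the two orbits. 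Careful bookkeeping yields $m - 1$ distinct separating discontinuities, whence $m \leq n$.

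For asymptotic periodicity, I would construct $W_\Phi$ by excluding a countable collection of arithmetic coincidences associated with the IFS. Let $\mathcal{F}$ denote the countable family of all finite compositions $\psi = \phi_{i_k} \circ \cdots \circ \phi_{i_1}$, each of which is a strict contraction with a unique fixed point $p_\psi$. The idea is to define $W_\Phi \subset I$ so that, when the discontinuities $x_1, \ldots, x_{n-1}$ lie in $W_\Phi$ (supplemented by an additional Fubini-type argument ruling out coincidences between pairs of discontinuities), no iterate $\psi(x_j)$ for $\psi \in \mathcal{F}$ coincides with a fixed point $p_{\psi'}$ or with another discontinuity $x_{j'}$. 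Since these are countably many codimension-one conditions, the resulting set of admissible parameters has full Lebesgue measure in $\Omega_{n-1}(I)$.

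For generic discontinuities, I would then argue by contradiction that $\omega_f(y)$ is a periodic orbit for every $y \in I$. If $\omega_f(y)$ were not a periodic orbit, the contractive nature of $f$ on each piece would force it to be a perfect minimal set on which a first-return map behaves like an interval-exchange-type map whose breakpoints are iterates of discontinuities under members of $\mathcal{F}$. The failure of eventual periodicity in the itinerary of $y$ would then produce an identity of the form $\psi(x_j) = p_{\psi'}$ or $\psi(x_j) = x_{j'}$, contradicting the construction of $W_\Phi$. The principal obstacle is precisely this last implication: extracting such a clean arithmetic identity from the qualitative hypothesis that $\omega_f(y)$ is not a periodic orbit requires a delicate analysis of how non-eventually-periodic itineraries produce accumulations of discontinuity iterates, and constitutes the technical core of the argument.
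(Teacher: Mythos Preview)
Your outline diverges from the paper's proof in both components, and in part (ii) the divergence is a genuine gap rather than an alternative route.

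For the bound on periodic orbits, your separating-discontinuity idea is plausible for injective piecewise contractions, but the paper explicitly allows the $\phi_i$ to be non-injective, and your ``careful bookkeeping'' is not spelled out in that setting. The paper does \emph{not} prove the bound for every $(x_1,\ldots,x_{n-1})$; it proves it only on the same generic set where asymptotic periodicity holds, using the invariant quasi-partition (finitely many intervals $J_\ell$ permuted by $f$) and a combinatorial equivalence relation on the intervals adjacent to the $x_i$ (Section~\ref{upperb}). So even if your argument can be completed, it would yield a stronger statement than the paper claims or proves.

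For asymptotic periodicity, the gap is more serious. Your exceptional set is the \emph{countable} set of coincidences $\psi(x_j)=p_{\psi'}$ or $\psi(x_j)=x_{j'}$. The paper's exceptional set is (after reduction to the highly contractive case) the attractor $\bigcap_{k\ge 0}A_k$ of the IFS, which is typically an \emph{uncountable} Cantor set of measure zero. The mechanism you are missing is this: when each $x_i$ lies outside $\bigcap_k A_k$, the total backward orbit $Q=\bigcup_i\bigcup_{k\ge 0}f^{-k}(\{x_i\})$ is \emph{finite} (Theorem~\ref{Qifinite}), which produces an invariant quasi-partition of $I$ into finitely many intervals on which $f$ acts as a self-map of a finite set of intervals; asymptotic periodicity then follows by an elementary pigeonhole/contraction argument. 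Your conditions do not force $Q$ to be finite, and you give no substitute mechanism---you yourself identify the implication ``$\omega_f(y)$ not periodic $\Rightarrow$ arithmetic identity'' as the technical core, but offer no indication of how to extract it. Without the finiteness of $Q$ (or an equivalent structural fact), the contradiction you seek does not materialize: a non-periodic $\omega$-limit set need not produce a coincidence with a \emph{fixed point} $p_{\psi'}$, only an accumulation inside the attractor $\bigcap_k A_k$, which is a much larger set.

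The paper also has to work to reduce the general Lipschitz case to one where $\mu(\bigcap_k A_k)=0$: first by a local truncation of the $\phi_i$ to make the IFS highly contractive (Section~\ref{epi}), then by passing to an iterate $f^k$ with $\rho^k<\tfrac12$ (Section~\ref{gc}). None of this structure is visible in your outline.
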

     
 Notice that in Theorem \ref{main}, the IFS $\Phi$ does not need to be affine nor injective. A weaker version of Theorem \ref{main} was proved by the authors in \cite{NPR} under two additional hypothesis: the maps $\phi_1,\ldots,\phi_n$ were injective and had non-overlapping ranges.
 
 This article is organized as follows. The proof of Theorem \ref{main} for $I=[0,1)$ is distributed along Sections \ref{IFS}, \ref{epi}, \ref{gc} and \ref{upperb}. The first three sections are dedicated to the asymptotic stability aspect while the upper bound for the number of periodic orbits  is in Section \ref{upperb}. The proof of Theorem \ref{main} for $I=\R$ is in Section \ref{proofofmain}.  Theorem \ref{abcd} is proved in Section \ref{proofofabcd}.

 \section{Highly Contractive Iterated Function Systems}\label{IFS}

In this section we provide a version of Theorem \ref{main}  for highly contractive IFSs defined on $I=[0,1)$ as follows. If $\phi: I \rightarrow I$ is a Lipschitz map, then $D\phi(x)$ exists  for almost every $x\in I$.
We say that an IFS $\{\phi_1,\ldots,\phi_{n}\}$ is {\it highly contractive} if there exists $0\le\rho<1$ such that, for almost every $x\in I$,
\begin{equation}\label{rho}
\vert D\phi_1(x)\vert +\ldots +\vert D\phi_n(x)\vert \le \rho<1.
\end{equation}

\begin{theorem}\label{pr1} 
Let $\Phi=\{\phi_1,\ldots,\phi_{n}\}$ be a highly contractive IFS defined on $I=[0,1)$, then there exists a full set 
$W_{\Phi}\subset I$ such that, for every $(x_1,\ldots,x_{n-1})\in\Omega_{n-1}\cap W_{\Phi}^{n-1}$, the  PC 
$f_{\phi_1,\ldots,\phi_n,x_1,\ldots,x_{n-1}}$ defined by $($\ref{mapf}$)$  is asymptotically periodic.
\end{theorem}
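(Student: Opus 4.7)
The plan is to produce a full measure, parameter-independent set $W_\Phi\subset I$ from the IFS attractor, so that placing all discontinuities in $W_\Phi$ forces every $\omega$-limit set to miss them, and then to upgrade this to asymptotic periodicity via a local-contraction lemma.

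For $w=(w_1,\ldots,w_k)\in\{1,\ldots,n\}^k$, set $\phi_w=\phi_{w_k}\circ\cdots\circ\phi_{w_1}$ and define
\[
\Lambda_\Phi^{\ast}=\bigcap_{k\ge 1}\overline{\bigcup_{|w|=k}\phi_w(I)},\qquad W_\Phi = I\setminus\Lambda_\Phi^{\ast};
\]
this depends only on $\Phi$. From the area formula $\mu(\phi(E))\le\int_E|D\phi|\,d\mu$ together with $(\ref{rho})$ one has $\sum_i\mu(\phi_i(E))\le\rho\,\mu(E)$, so by induction $\sum_{|w|=k}\mu(\phi_w(I))\le\rho^k$; hence $\mu(\Lambda_\Phi^{\ast})=0$ and $W_\Phi$ is full. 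Now fix $(x_1,\ldots,x_{n-1})\in\Omega_{n-1}\cap W_\Phi^{n-1}$ and any $x\in I$, write $f=f_{\phi_1,\ldots,\phi_n,x_1,\ldots,x_{n-1}}$ and let $\sigma_0\sigma_1\ldots$ be the symbolic coding of the orbit of $x$. Then $f^k(x)=\phi_w(x)\in\phi_w(I)$ for $w=(\sigma_0,\ldots,\sigma_{k-1})$, and since $\bigcup_{|w|=k}\phi_w(I)$ is decreasing in $k$, we obtain $\omega_f(x)\subset\Lambda_\Phi^{\ast}$. As each $x_j\notin\Lambda_\Phi^{\ast}$, this gives $\omega_f(x)\cap\{x_1,\ldots,x_{n-1}\}=\emptyset$ for every $x\in I$.

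The crucial remaining step is a lemma: if $K:=\omega_f(x)$ is disjoint from $D=\{x_1,\ldots,x_{n-1}\}$, then $K$ is a single periodic orbit. Pick $\eta>0$ with $\eta<d(K,D)$ and let $V$ be the open $\eta$-neighborhood of $K$; then $V\cap D=\emptyset$, $V$ is a finite disjoint union of open intervals each contained in one piece of the partition, the orbit of $x$ eventually enters $V$, and the minimum distance $\delta$ between distinct components of $V$ is strictly positive. By a pigeonhole/recurrence argument there exist $m_1<m_2$ with $f^{m_1}(x)$ and $f^{m_2}(x)$ lying in a common component of $V$ at mutual distance strictly less than $\delta$; setting $q=m_2-m_1$, the piecewise $\rho$-Lipschitz structure on $V$ yields by induction that $f^{m_1+k}(x)$ and $f^{m_2+k}(x)$ stay in a common component of $V$ and satisfy $|f^{m_1+k}(x)-f^{m_2+k}(x)|\le\rho^k\delta$. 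Letting $k\to\infty$, every $z\in K$ is a fixed point of $f^q$, so $K\subset\mathrm{Fix}(f^q)$, which is locally discrete (each such fixed point is isolated since $f^q$ is a $\rho^q$-contraction in a neighborhood of it); hence $K$ is finite. A standard basin-of-attraction argument then shows that $K$ is exactly one periodic orbit. The main obstacle is this lemma, in particular keeping the two orbit segments confined to a common component of $V$ as they contract and handling the fact that $V$ may have many components when $K$ is topologically complicated; the earlier measure-theoretic steps are routine once $\Lambda_\Phi^{\ast}$ is introduced.
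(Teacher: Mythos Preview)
Your argument is correct and follows a genuinely different route from the paper's. The paper proceeds structurally: it takes $W_\Phi=W_1\cap W_2$, where $W_1=I\setminus\bigcap_k A_k$ is essentially your $I\setminus\Lambda_\Phi^*$, while $W_2$ is a Sard-type full set (via a lemma of Wells) guaranteeing that $h^{-1}(\{x\})$ is finite for every $h\in\bigcup_k\mathscr{C}_k$. From this it deduces that $Q=\bigcup_i\bigcup_{k\ge 0}f^{-k}(\{x_i\})$ is finite, producing an \emph{invariant quasi-partition} $\mathscr{P}$ of $(0,1)\setminus Q$ into finitely many open intervals that $f$ maps into one another; eventual periodicity of the induced symbolic map on $\mathscr{P}$, together with a contraction argument on each periodic interval, gives the conclusion. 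Your path is more direct for this particular statement: using only $W_1$, you observe that $\omega_f(x)\subset\Lambda_\Phi^*$ automatically avoids the discontinuities, and then your pigeonhole/shadowing lemma on the $\eta$-neighbourhood forces $\omega_f(x)$ to be a single periodic orbit (indeed your estimate $|f^{m_1+k}(x)-f^{m_1+k+q}(x)|\le\rho^k\delta$ makes each subsequence $f^{m_1+r+jq}(x)$ Cauchy, so the limits $p_0,\ldots,p_{q-1}$ form a single cycle, which is a cleaner way to finish than the vague ``basin-of-attraction'' phrase). This dispenses with $W_2$ and the quasi-partition entirely and even yields a slightly larger admissible $W_\Phi$. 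The trade-off is that the paper reuses the invariant quasi-partition heavily downstream---in extending to Lipschitz constant $\kappa<\tfrac12$, then to general $\rho<1$, and in bounding the number of periodic orbits---so their apparently longer route earns its keep later, whereas your cleaner proof here in isolation would need additional ideas to support those extensions. One cosmetic point: you should also take $\eta<d(K,\{0,1\})$ so that $V\subset(0,1)$ and each component of $V$ genuinely lies in one continuity interval; this is harmless since $\phi_i(\bar I)\subset(0,1)$ forces $K\subset(0,1)$.
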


We need some preparatory lemmas to prove Theorem \ref{pr1}. Throughout this section, except in Definition \ref{partition} and Lemma \ref{tmt}, 
we assume that $\Phi$ is a highly contractive IFS.

Denote by $Id$ the identity map on $\bar{I}$. Let $\mathscr{C}_0=\{Id\}$ and $A_0=\bar{I}$. For every $k\ge 0$, let
\begin{equation}\label{ckak}
\mathscr{C}_{k+1}=\mathscr{C}_{k+1}(\phi_1,\ldots,\phi_n)=\{\phi_{i}\circ h : 1\le i\le n, h \in \mathscr{C}_{k} \} \;\; \mbox{ and } \; A_k=\cup_{h\in\mathscr{C}_k} h(\bar{I}).
\end{equation}

\begin{lemma}\label{Ck} For every $k\ge 0$, 
\begin{itemize} \item [(i)] $A_{k+1}=\cup_{i=1}^n \phi_i (A_k)\subset A_k$ ;
\item [(ii)] Let $W_1=I \setminus\cap_{k\ge 0} A_k$, then $W_1=\bar{I}$ almost surely. 
\end{itemize}
\end{lemma}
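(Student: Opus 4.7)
Part (i) is a direct unpacking of definitions, so I would dispatch it first. Since $\mathscr{C}_{k+1}=\{\phi_i\circ h: 1\le i\le n,\ h\in\mathscr{C}_k\}$, I would write
\[
A_{k+1}=\bigcup_{i=1}^{n}\bigcup_{h\in\mathscr{C}_k}\phi_i\bigl(h(\bar I)\bigr)=\bigcup_{i=1}^{n}\phi_i\Bigl(\bigcup_{h\in\mathscr{C}_k}h(\bar I)\Bigr)=\bigcup_{i=1}^{n}\phi_i(A_k).
\]
The inclusion $A_{k+1}\subset A_k$ then follows by induction on $k$: the base case $A_1=\bigcup_i\phi_i(\bar I)\subset\mathring I\subset\bar I=A_0$ holds because each $\phi_i$ maps $\bar I$ into $\mathring I$, and the inductive step is monotonicity of $\phi_i$ under set inclusion applied to $A_{k+1}\subset A_k$.

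For part (ii), the idea is to show $\mu(A_k)\to 0$ geometrically using the highly contractive hypothesis. The key analytic fact I would invoke is the area-formula inequality for Lipschitz maps: if $\phi:\bar I\to\bar I$ is Lipschitz and $E\subset\bar I$ is measurable, then
\[
\mu(\phi(E))\le\int_E |D\phi(x)|\,dx,
\]
which holds even when $\phi$ fails to be injective (the factor $\mathcal{H}^0(\phi^{-1}(y)\cap E)\ge\chi_{\phi(E)}(y)$ in the area formula yields the inequality after integration). Applying this to each $\phi_i$, summing over $i$, and using the highly contractive bound (\ref{rho}), I would get
\[
\mu(A_{k+1})\le\sum_{i=1}^n\mu(\phi_i(A_k))\le\int_{A_k}\sum_{i=1}^n|D\phi_i(x)|\,dx\le\rho\,\mu(A_k).
\]
By induction $\mu(A_k)\le\rho^k\mu(A_0)=\rho^k$, and since $0\le\rho<1$ and the sets $A_k$ are nested, $\mu\bigl(\bigcap_{k\ge 0}A_k\bigr)=\lim_k\mu(A_k)=0$. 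This is exactly $\mu(\bar I\setminus W_1)=0$, i.e.\ $W_1=\bar I$ almost surely.

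The only nontrivial ingredient is the Lipschitz area-formula inequality, and since the $\phi_i$ need not be injective, I would take a moment to justify it rather than appeal to the naive change of variables. Everything else is a routine iteration of the contraction estimate.
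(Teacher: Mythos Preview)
Your proof is correct and follows essentially the same route as the paper: the equality in (i) is unpacked identically, and part (ii) is the same geometric decay argument via the Lipschitz area-formula inequality (the paper calls it ``the change of variables formula for Lipschitz maps'' and does not pause to justify it, so your added care about non-injectivity is a bonus). The only minor difference is in proving the inclusion $A_{k+1}\subset A_k$: you argue by induction on $k$, whereas the paper observes directly that $\mathscr{C}_{k+1}$ can also be written as $\{h\circ\phi_i : h\in\mathscr{C}_k,\ 1\le i\le n\}$, from which $A_{k+1}=\bigcup_{h\in\mathscr{C}_k}\bigcup_i h(\phi_i(\bar I))\subset\bigcup_{h\in\mathscr{C}_k}h(\bar I)=A_k$ follows in one line.
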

\begin{proof} The equality in claim (i) follows from the following equalities:
$$A_{k+1}=\bigcup_{g\in\mathscr{C}_{k+1}}g(\bar{I})=\bigcup_{i=1}^n\bigcup_{h\in\mathscr{C}_k}\phi_i\big(h(\bar{I})\big)
=\bigcup_{i=1}^n\phi_i\Big(\bigcup_{h\in\mathscr{C}_k}h(\bar{I})\Big)=\bigcup_{i=1}^n \phi_i(A_k).
$$
It follows easily from (\ref{ckak}) that
$\mathscr{C}_{k+1}=\{h\circ\phi_{i} \mid 1\le i\le n, h \in \mathscr{C}_{k} \}$, thus
$$
A_{k+1}=\bigcup_{h\in\mathscr{C}_k} \bigcup_{i=1}^n h\big(\phi_i(\bar{I})\big) \subset \bigcup_{h\in\mathscr{C}_k} \bigcup_{i=1}^n h\big(\bar{I}\big) =  \bigcup_{h\in\mathscr{C}_k} h\big(\bar{I}\big) =A_k
$$
which concludes the proof of item (i). The proof of  claim (ii) follows from the change of variables formula for Lipschitz maps together with claim (i) and equation (\ref{rho}),
$$
\mu(A_{k+1})\le \sum_{i=1}^n \mu\left(\phi_i(A_k)\right)\le\sum_{i=1}^n \int_{A_k} \vert D\phi_i\vert\,{\rm d}\mu=\int_{A_k} \bigg(\sum_{i=1}^n \vert D\phi_i\vert\bigg)\,{\rm d}\mu\le\rho \mu(A_k).
$$ 
Therefore $\mu(A_{k})\le \rho^{k}$, for every $k\ge 0$, thus $\mu(\cap_{k\ge 0} A_k)=0$ which proves item (ii).

\end{proof}

If $\Phi$ is a highly contractive IFS  its atractor, $ \cap_{k\ge 0} A_k$, is a null measure set, by Lemma \ref{Ck},  item (ii). Using   \cite[Theorem 3.1]{CGMU}, one obtains that for  every point $(x_1,\ldots,x_{n-1}) \in \Omega_{n-1}\cap W_{1}^{n-1}$,  the map $f_{x_1,\ldots, x_{n-1}}$ has finitely many periodic orbits and is asymptotically periodic. However, the claim of Theorem \ref{main} is stronger and holds for any contractive IFS.

\begin{lemma}\label{st2} 
There exists a full set $W_2\subset I$ such that 
$h^{-1}(\{x\})$ is a finite set for every $x\in W_2$ and $h\in\cup_{k\ge 0} \mathscr{C}_k$.
\end{lemma}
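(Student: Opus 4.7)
The plan is to exploit that every $h\in\bigcup_{k\ge 0}\mathscr{C}_k$ is a finite composition of Lipschitz contractions, hence itself Lipschitz, and then use a Banach indicatrix / area-formula argument to control the fibres of $h$ for almost every point. The full set $W_2$ will be obtained as the complement of a countable union of exceptional null sets, one for each $h$.

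First I observe that the family $\bigcup_{k\ge 0}\mathscr{C}_k$ is countable: by construction $\mathscr{C}_k$ has at most $n^k$ elements. Next, each $h\in\mathscr{C}_k$ is the composition of $k$ Lipschitz contractions $\phi_i$, so $h$ itself is Lipschitz on $\bar I=[0,1]$; by Rademacher's theorem $Dh$ exists almost everywhere and is bounded.

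The main step is to apply the one-dimensional area formula (equivalently, Banach's indicatrix theorem for BV functions) to $h$: setting $N(h,y)=\#\{x\in\bar I:h(x)=y\}$, we have
\begin{equation*}
\int_{\R} N(h,y)\,{\rm d}\mu(y)=\int_{\bar I}|Dh(x)|\,{\rm d}\mu(x)<\infty,
\end{equation*}
the last inequality because $h$ is Lipschitz and $\bar I$ has finite measure. Since an integrable function is almost everywhere finite, the exceptional set
\begin{equation*}
E_h=\{y\in I:N(h,y)=\infty\}=\{y\in I:h^{-1}(\{y\})\text{ is infinite}\}
\end{equation*}
satisfies $\mu(E_h)=0$.

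Finally I define
\begin{equation*}
W_2=I\setminus\bigcup_{k\ge 0}\bigcup_{h\in\mathscr{C}_k}E_h,
\end{equation*}
which is a countable union of null sets, so $W_2$ is a full set. By construction, for every $x\in W_2$ and every $h\in\bigcup_{k\ge 0}\mathscr{C}_k$, the fibre $h^{-1}(\{x\})$ is finite, as required. The only subtle point is ensuring the area-formula identity above is applicable to a general Lipschitz $h$, and not merely to injective or piecewise monotone ones; but this is precisely the content of Banach's classical indicatrix theorem for functions of bounded variation, and Lipschitz maps on a bounded interval are BV, so no additional hypothesis on the $\phi_i$ is needed.
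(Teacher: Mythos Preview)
Your proof is correct and follows essentially the same approach as the paper: both argue that each $h\in\bigcup_k\mathscr{C}_k$ is Lipschitz, hence has finite fibres outside a null set, and then take the countable union of these exceptional sets. The paper simply cites \cite{JW} for the finite-fibre fact, whereas you supply a self-contained argument via Banach's indicatrix theorem.
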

\begin{proof} 
It is proved in \cite{JW} that if $h:\bar{I}\to\bar{I}$ is a Lipschitz map, then $h^{-1}(\{x\})$ is finite for almost every $x\in\bar{I}$.
The lemma follows immediately  from the fact that $\mathscr{C}_k$ is a finite set.
\end{proof}

Hereafter, let $W_1$ and $W_2$ be as in Lemma \ref{Ck},  item (ii), and Lemma \ref{st2}. Set
\begin{equation}\label{WW}
W_{\Phi}=W_1\cap W_2,\,\,\textrm{then}\,\, W_{\Phi}=I\,\,\textrm{almost surely}.
\end{equation}

 \begin{proposition}\label{mainar} 
For each $x \in W_{\Phi}$, $\displaystyle \bigcup_{k\ge 0}\bigcup_{h\in\mathscr{C}_k} h^{-1}(\{x\})$ is a finite subset of $ \bar{I} \; \big\backslash \bigcap_{k\ge 0} A_k$.
 \end{proposition}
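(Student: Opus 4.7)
The plan is to combine the two properties that define $W_{\Phi}=W_1\cap W_2$: the first ($x\in W_1$) will give an \emph{a priori} depth bound on how far into the tree of compositions the point $x$ can have preimages, while the second ($x\in W_2$) will handle finiteness of each individual fiber. The forward‐invariance of the attractor will then force every preimage point to lie outside $\bigcap_{k\ge 0} A_k$.

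\textbf{Step 1 (finiteness).} Since $x\in W_{\Phi}\subset W_1 = I\setminus\bigcap_{k\ge 0}A_k$ and, by Lemma \ref{Ck}(i), the sequence $(A_k)_{k\ge 0}$ is decreasing, there exists an integer $k_0\ge 0$ with $x\notin A_k$ for every $k\ge k_0$. By the very definition \eqref{ckak} we have $h(\bar I)\subset A_k$ for every $h\in\mathscr{C}_k$, hence $h^{-1}(\{x\})=\emptyset$ as soon as $k\ge k_0$. Therefore
\[
\bigcup_{k\ge 0}\bigcup_{h\in\mathscr{C}_k} h^{-1}(\{x\}) \;=\; \bigcup_{0\le k<k_0}\bigcup_{h\in\mathscr{C}_k} h^{-1}(\{x\}),
\]
which is a finite union (each $\mathscr{C}_k$ is finite by induction from $|\mathscr{C}_{k+1}|\le n\,|\mathscr{C}_k|$) of finite sets (each $h^{-1}(\{x\})$ is finite by Lemma \ref{st2}, because $x\in W_2$).

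\textbf{Step 2 (avoidance of the attractor).} Set $A_\infty=\bigcap_{k\ge 0}A_k$. From Lemma \ref{Ck}(i) one has $\phi_i(A_k)\subset A_{k+1}\subset A_k$ for every $i$ and $k$, hence $\phi_i(A_\infty)\subset A_\infty$. Every $h\in\mathscr{C}_k$ is either the identity (when $k=0$) or a composition of maps in $\{\phi_1,\ldots,\phi_n\}$, so $h(A_\infty)\subset A_\infty$ in all cases. Now, if some $y\in h^{-1}(\{x\})$ belonged to $A_\infty$, we would get $x=h(y)\in A_\infty$, contradicting $x\in W_1$. Consequently every such $y$ lies in $\bar I\setminus A_\infty$, which is exactly the required inclusion.

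There is no single hard step here; the subtle point is to notice that the two sets $W_1$ and $W_2$ play complementary roles. Without $W_1$ one could not truncate the union at a finite depth $k_0$, and without $W_2$ the individual fibers $h^{-1}(\{x\})$ could be infinite. The forward-invariance $\phi_i(A_\infty)\subset A_\infty$, which is a direct consequence of the inclusion part of Lemma \ref{Ck}(i), is the only geometric input needed to transfer the non-attractor property from $x$ back to all of its preimages.
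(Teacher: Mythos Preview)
Your proof is correct and follows essentially the same strategy as the paper's: use $x\in W_1$ together with the monotonicity of $(A_k)$ to bound the depth at which preimages can occur, use $x\in W_2$ to ensure each fiber is finite, and use the forward invariance $\phi_i(A_k)\subset A_{k+1}$ to show that no preimage of $x$ can lie in $\bigcap_{k\ge 0}A_k$. The only cosmetic difference is that the paper phrases both parts by contradiction, whereas you argue directly by exhibiting the cutoff $k_0$ and the inclusion $h(A_\infty)\subset A_\infty$.
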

\begin{proof} 
Let $x\in W_{\Phi}$. Assume by contradiction that $\cup_{k\ge 0}\cup_{h\in\mathscr{C}_k} h^{-1}(\{x\})$ is an infinite set. By \mbox{Lemma \ref{st2}}, for every $k\ge 0$, the set $\bigcup_{h\in\mathscr{C}_k} h^{-1}(\{x\})$ is finite. Therefore, for infinitely many $k\ge 0$, the set $\bigcup_{h\in\mathscr{C}_k} h^{-1}(\{x\})$ is nonempty and $x \in A_k$. By item (i) of Lemma \ref{Ck}, $x\in \cap_{k\ge 0} A_k$, which contradicts the fact that $x\in W_1$. This proves the first claim.

Let $y\in \bigcup_{k\ge 0}\bigcup_{h\in\mathscr{C}_k} h^{-1}(\{x\})$, then there exist $\ell\ge 0$ and $h_\ell \in \mathscr{C}_\ell$ such that $x=h_\ell(y)$. Assume by contradiction that $y\in \cap_{k\ge 0} A_k$. Then  $x\in h_\ell ( A_k) \subset \cup_{h\in\mathscr{C}_\ell} h(A_k)= A_{\ell+k}$ for every $k\geq 0$, implying that $x\in \cap_{k\ge \ell} A_k = \cap_{k\ge 0} A_k$, which is a contradiction. This proves the second claim. \end{proof}

 \begin{theorem}\label{Qifinite}  
Let $(x_1,\ldots,x_{n-1})\in \Omega_{n-1}\cap W_{\Phi}^{n-1}$ and  $f=f_{\phi_1,\ldots,\phi_n,x_1,\ldots,x_{n-1}}$, then the set
\begin{equation}\label{tap}
Q=\bigcup_{i=1}^{n-1}\bigcup_{k\ge0} f^{-k}(\{x_i\})
\end{equation}
is finite. Moreover, $Q\subset I\setminus\cap_{k\ge 0} A_k$.
\end{theorem}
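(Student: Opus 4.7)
The plan is to bound the full preimage $\bigcup_{k\ge 0} f^{-k}(\{x_i\})$ by the corresponding IFS preimage set $\bigcup_{k\ge 0}\bigcup_{h\in\mathscr{C}_k} h^{-1}(\{x_i\})$, and then quote Proposition \ref{mainar} applied at each point $x_i \in W_{\Phi}$.

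First I would record the pointwise structure of $f$: by the definition (\ref{mapf}), for every $y \in I$ there is a unique index $i(y) \in \{1,\ldots,n\}$ such that $f(y) = \phi_{i(y)}(y)$. Consequently, for every $x \in I$,
$$
f^{-1}(\{x\}) \subset \bigcup_{i=1}^{n} \phi_i^{-1}(\{x\}) = \bigcup_{h\in\mathscr{C}_1} h^{-1}(\{x\}).
$$
Then I would prove by induction on $k$ that $f^{-k}(\{x\}) \subset \bigcup_{h\in\mathscr{C}_k} h^{-1}(\{x\})$ for every $k \ge 0$. The base case $k=0$ is trivial. For the step, given $y$ with $f^{k+1}(y) = x$, set $z = f(y) = \phi_{i(y)}(y)$. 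Since $f^{k}(z) = x$, the induction hypothesis yields some $h \in \mathscr{C}_k$ with $h(z) = x$, and hence $(h \circ \phi_{i(y)})(y) = x$. Using the alternative presentation $\mathscr{C}_{k+1} = \{h \circ \phi_i : 1 \le i \le n,\ h \in \mathscr{C}_k\}$ established in the proof of Lemma \ref{Ck}, the composition $h \circ \phi_{i(y)}$ belongs to $\mathscr{C}_{k+1}$, completing the induction.

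Taking the union over $k\ge 0$ and then over $i \in \{1,\ldots,n-1\}$ gives
$$
Q \subset \bigcup_{i=1}^{n-1}\bigcup_{k\ge 0}\bigcup_{h\in\mathscr{C}_k} h^{-1}(\{x_i\}).
$$
Because each $x_i$ lies in $W_{\Phi}$, Proposition \ref{mainar} ensures that the inner double union is a finite subset of $\bar{I} \setminus \bigcap_{k\ge 0} A_k$. Hence $Q$ is a finite union of finite sets, and since $Q \subset I$ by construction, we obtain the announced inclusion $Q \subset I \setminus \bigcap_{k\ge 0} A_k$.

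I do not foresee any substantive obstacle: all the analytic content (Lipschitz change of variables, Lebesgue-a.e.\ finiteness of fibres, highly contractive estimate) has already been absorbed into Proposition \ref{mainar}. The only point requiring attention is the inductive step, where one must be careful that compositions are accumulated on the right (as $h \circ \phi_{i(y)}$) rather than on the left, which is exactly the identity between the two descriptions of $\mathscr{C}_{k+1}$ used in Lemma \ref{Ck}.
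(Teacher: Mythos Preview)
Your proposal is correct and follows essentially the same approach as the paper: both arguments rest on the inclusion $\bigcup_{k\ge 0}f^{-k}(\{x_i\})\subset\bigcup_{k\ge 0}\bigcup_{h\in\mathscr{C}_k} h^{-1}(\{x_i\})$ and then invoke Proposition~\ref{mainar}. The paper simply asserts this inclusion without justification, whereas you supply the explicit induction on $k$ (including the care about composing on the right via the alternative description of $\mathscr{C}_{k+1}$), so your write-up is in fact a more complete version of the same proof.
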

\begin{proof} 
Let $(x_1,\ldots,x_{n-1})\in \Omega_{n-1}\cap W_{\Phi}^{n-1}$.   
By Proposition \ref{mainar}, the set $\cup_{k\ge 0}\cup_{h\in\mathscr{C}_k} h^{-1}(\{x_i\})$ is finite for every $1\le i\le n-1$. 
Hence, as 
$$
\bigcup_{k\ge 0}f^{-k}(\{x_i\})\subset\bigcup_{k\ge 0}\bigcup_{h\in\mathscr{C}_k} h^{-1}(\{x_i\}), \quad 1\le i\le n-1,
$$
we have that $Q$ is also a finite set. Moreover, $Q\subset I\setminus \cap_{k\ge 0} A_k$ by Proposition \ref{mainar}. 
 \end{proof}
 
 Next corollary assures that  Theorem \ref{Qifinite} holds if the partition $[x_{0},x_1)$, \ldots, $[x_{n-1},x_n)$ in (\ref{mapf}) is replaced by any partition $I_1,\ldots,I_n$ with each interval $I_i$ having endpoints 
$x_{i-1}$ and $x_i$.
 
 \begin{corollary}\label{Rem1} 
Let $f=f_{\phi_1,\ldots,\phi_n,x_1,\ldots,x_{n-1}}$ be as in Theorem \ref{Qifinite}. Let  $\tilde{f}:I\to I$ be a map having the following properties:
 \begin{itemize}
 \item [(P1)] $\tilde{f}(x)=f(x)$ for every $x\in (0,1)\setminus \{x_1,\ldots,x_{n-1}\}$;
 \item [(P2)] $\tilde{f}(x_i)\in \{\lim_{x\to x_{i}-} f(x),\lim_{x\to x_{i}+} f(x)\}$ for every $1\le i\le n-1$.
 \end{itemize}
 Then the set $\tilde{Q}=\bigcup_{i=1}^{n-1}\bigcup_{k\ge0} \tilde{f}^{-k}(\{x_i\})$ is finite.
 \end{corollary}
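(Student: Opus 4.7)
The plan is to reduce the statement to Theorem \ref{Qifinite} by showing that each preimage of $x_i$ under $\tilde{f}$ arises as a preimage under one of the composition maps in some $\mathscr{C}_k$. More precisely, I will establish the inclusion
\begin{equation*}
\tilde{f}^{-k}(\{x_i\}) \subset \bigcup_{h\in\mathscr{C}_k} h^{-1}(\{x_i\})
\end{equation*}
for every $k\ge 0$ and every $i \in \{1,\ldots,n-1\}$, and then invoke Proposition \ref{mainar}.

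The first step is a pointwise observation: under (P1) and (P2), for every $y\in I$ one has $\tilde{f}(y)=\phi_j(y)$ for some $j\in\{1,\ldots,n\}$. Indeed, if $y\notin\{x_1,\ldots,x_{n-1}\}$, then (P1) together with (\ref{mapf}) gives $\tilde{f}(y)=f(y)=\phi_i(y)$ for the unique index $i$ with $y\in [x_{i-1},x_i)$. If $y=x_\ell$, then (P2) and the continuity of the Lipschitz maps $\phi_\ell,\phi_{\ell+1}$ force $\tilde{f}(x_\ell)\in\{\phi_\ell(x_\ell),\phi_{\ell+1}(x_\ell)\}$, so again $\tilde{f}(y)$ is a selection from $\{\phi_1(y),\ldots,\phi_n(y)\}$. (A boundary case for $\ell=n-1$ needs a moment's care, but the same two limits are available.)

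Iterating this observation, if $\tilde{f}^k(y)=x_i$ there exist indices $\sigma(1),\ldots,\sigma(k)\in\{1,\ldots,n\}$ such that $x_i=\phi_{\sigma(k)}\circ\cdots\circ\phi_{\sigma(1)}(y)$, whence $y\in h^{-1}(\{x_i\})$ for $h=\phi_{\sigma(k)}\circ\cdots\circ\phi_{\sigma(1)}\in\mathscr{C}_k$. This yields the displayed inclusion. Consequently
\begin{equation*}
\tilde{Q}\subset \bigcup_{i=1}^{n-1}\bigcup_{k\ge 0}\bigcup_{h\in\mathscr{C}_k} h^{-1}(\{x_i\}),
\end{equation*}
and since $(x_1,\ldots,x_{n-1})\in\Omega_{n-1}\cap W_\Phi^{n-1}$, Proposition \ref{mainar} makes each inner double union finite; a finite union over $i$ preserves finiteness, so $\tilde{Q}$ is finite. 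There is no real obstacle here — the only delicate point is checking (P2) forces $\tilde{f}(x_\ell)$ to lie in $\{\phi_\ell(x_\ell),\phi_{\ell+1}(x_\ell)\}$, which is immediate from the Lipschitz (hence continuous) nature of the branches.
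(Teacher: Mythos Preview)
Your proof is correct and follows essentially the same route as the paper. The paper phrases the key observation as the existence of a partition $I_1,\ldots,I_n$ of $I$ (with the same endpoints $x_0,\ldots,x_n$ but possibly different left/right endpoint conventions) on which $\tilde{f}\vert_{I_i}=\phi_i\vert_{I_i}$; you phrase it pointwise as $\tilde{f}(y)\in\{\phi_1(y),\ldots,\phi_n(y)\}$ for each $y$. Either way one obtains $\tilde{Q}\subset\bigcup_{i=1}^{n-1}\bigcup_{k\ge 0}\bigcup_{h\in\mathscr{C}_k}h^{-1}(\{x_i\})$ and concludes by Proposition~\ref{mainar}. (Your aside about $\ell=n-1$ is unnecessary: since $x_{n-1}<1$ and $\phi_{n-1},\phi_n$ are defined on all of $[0,1]$, there is no boundary subtlety there.)
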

 \begin{proof} 
The definition of $f$ given by (\ref{mapf}) together with
 the properties (P1) and (P2) assure that there exists a partition of $I$ into $n$ intervals 
 $I_1,\ldots,I_n$ such that, for every $1\le i\le n$, the interval $I_i$ has endpoints $x_{i-1}$ and $x_i$ and
 $\tilde{f}\vert_{I_i}=\phi_i\vert_{I_i}$. In particular, we have that
 $\tilde{Q}\subset\bigcup_{i=1}^{n-1} \bigcup_{k\ge 0}\bigcup_{h\in\mathscr{C}_k}h^{-1}(\{x_i\})$ which is a finite set by Proposition \ref{mainar}.
 \end{proof}
 
 We remark that, in the next  definition  and in Lemma \ref{tmt},  the IFS is not assumed to be highly contractive.
    
  \begin{definition}\label{partition} 
Let $(x_1,\ldots,x_{n-1})\in\Omega_{n-1}$ and $f=f_{\phi_1,\ldots,\phi_n,x_1,\ldots,x_{n-1}}$ be such that the set $Q$ defined in (\ref{tap}) is finite. The  collection $\mathscr{P}=\{J_{\ell}\}_{\ell=1}^m$ of all connected components of $(0,1)\setminus Q$ is called the {\it invariant quasi-partition} of $f$. In this case,  we say that $f$ {\it has an  invariant quasi-partition}.
  \end{definition}
  
The existence of an invariant quasi-partition plays a fundamental role in  this article. 
   
  \begin{lemma}\label{tmt} 
Let $f=f_{\phi_1,\ldots,\phi_n,x_1,\ldots,x_{n-1}}$ and $\mathscr{P}=\{J_{\ell}\}_{\ell=1}^m$ 
   be as in Definition \ref{partition}, then for every interval $J\in \mathscr{P}$ 
  there exists an interval $J' \in \mathscr{P}$ such that $f(J)\subset J'$.
 \end{lemma}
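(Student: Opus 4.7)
The plan is to prove this via two observations: first, that $f$ restricted to any $J \in \mathscr{P}$ is a single continuous branch $\phi_i$; and second, that the image $f(J)$ is disjoint from $Q$. These together force $f(J)$, being connected, to sit inside a single connected component of $(0,1)\setminus Q$.

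To begin, I would note that each $x_i$ belongs to $Q$: taking $k=0$ in (\ref{tap}) gives $x_i\in f^{-0}(\{x_i\})=\{x_i\}\subset Q$. Hence any $J\in\mathscr{P}$, being an open connected component of $(0,1)\setminus Q$, contains none of $x_1,\ldots,x_{n-1}$ and therefore lies in a single interval $(x_{i-1},x_i)$ (with the convention $x_0=0$, $x_n=1$). By (\ref{mapf}) this gives $f|_J=\phi_i|_J$. Since $\phi_i$ is continuous and $\phi_i(\bar I)\subset\mathring I$, the image $f(J)=\phi_i(J)$ is a connected subset of $(0,1)$.

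The key step is to verify that $f(J)\cap Q=\emptyset$. Suppose, for contradiction, that $y=f(x)\in Q$ for some $x\in J$. Then $y\in f^{-k}(\{x_j\})$ for some $k\ge 0$ and $1\le j\le n-1$; applying $f^k$ gives $f^{k+1}(x)=x_j$, so $x\in f^{-(k+1)}(\{x_j\})\subset Q$, contradicting $x\in J\subset (0,1)\setminus Q$.

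Combining the two observations, $f(J)$ is a connected subset of $(0,1)\setminus Q$, hence contained in a single connected component $J'\in\mathscr{P}$, which is the desired conclusion. The argument is short and I do not foresee a significant obstacle; the whole mechanism is the built-in self-containment of $Q$ under preimages (i.e., $f^{-1}(Q)\cap I\subset Q$), combined with continuity of the individual branches $\phi_i$ on the sub-intervals determined by the discontinuity set.
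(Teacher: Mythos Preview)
Your proof is correct and follows essentially the same approach as the paper: the core mechanism in both is the backward-invariance $f^{-1}(Q)\subset Q$, which forces $f(J)\cap Q=\emptyset$ and hence $f(J)$ lies in a single component of $(0,1)\setminus Q$. You simply make explicit two points the paper leaves implicit, namely that $x_i\in Q$ (so $f|_J$ is a single continuous branch) and that $f(J)\subset(0,1)$.
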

 \begin{proof} Assume the lemma is false, then there exists $J \in \mathscr{P}$ such that $f(J)\cap  Q \neq\emptyset$. Hence, 
$J \cap f^{-1}(Q)  \neq\emptyset$.
However, $f^{-1}(Q)\subset Q$ implying that  $J \cap Q  \neq\emptyset$ which  contradicts the definition of $\mathscr{P}$.
 \end{proof}

As the next lemma shows, the existence of an invariant quasi-partition $\mathscr{P}$ implies the following weaker notion of periodicity. Let $d:I\to \{1,\ldots,n\}$ be the piecewise constant function defined by $d(x)=i$ if $x\in I_i$. The {\it itinerary} of the point $x\in I$ is the sequence of digits $d_0,d_1,d_2,\ldots$ defined by $d_k=d\left(f^k(x)\right)$. 
We say that {\it the itineraries of $f$ are eventually periodic} if the sequence $d_0,d_1,d_2,\ldots$ is eventually periodic for every $x\in I$. 

\begin{lemma}\label{ipartition} 
Let $(x_1,\ldots,x_{n-1})\in \Omega_{n-1}\cap W_{\Phi}^{n-1}$, then  all itineraries of $f=f_{\phi_1,\ldots,\phi_n,x_1,\ldots,x_{n-1}}$ are eventually periodic.
\end{lemma}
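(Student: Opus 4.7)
The plan is to exploit the invariant quasi-partition $\mathscr{P}=\{J_1,\ldots,J_m\}$ produced by Theorem \ref{Qifinite} and Definition \ref{partition}, and to use the fact that $\mathscr{P}$ is finite. The starting observation is that the cut points $x_1,\ldots,x_{n-1}$ all lie in $Q$, so each connected component $J\in\mathscr{P}$ of $(0,1)\setminus Q$ is contained in a single piece $I_i=[x_{i-1},x_i)$. Consequently, the digit function $d$ is constant on each $J$, and descends to a well-defined $\widehat{d}:\mathscr{P}\to\{1,\ldots,n\}$. Combined with Lemma \ref{tmt}, which gives a map $\sigma:\mathscr{P}\to\mathscr{P}$ with $f(J)\subset\sigma(J)$, this already settles the case $x\in J$: one has $f^k(x)\in\sigma^k(J)$ for every $k\ge 0$, and the itinerary of $x$ reads $\bigl(\widehat{d}(\sigma^k(J))\bigr)_{k\ge 0}$, which is eventually periodic because $\sigma$ acts on a finite set.

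The next step is to reduce an arbitrary starting point $x\in I$ to the previous case. I will first verify that $I\setminus Q$ is forward invariant: if $x\notin Q$ and $f(x)\in Q$, then $f^{k+1}(x)=x_i$ for some $k\ge 0$ and $i$, forcing $x\in f^{-(k+1)}(\{x_i\})\subset Q$, contradiction. Moreover, since $\phi_i(\bar I)\subset\mathring I=(0,1)$, any one iterate lands in $(0,1)$; thus for $x\in I\setminus Q$ one has $f(x)\in(0,1)\setminus Q=\bigcup_\ell J_\ell$. Therefore, after at most one step, the orbit enters some $J\in\mathscr{P}$ and the first paragraph's argument applies.

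It remains to handle points $x\in Q$. Here I use only that $Q$ is finite (Theorem \ref{Qifinite}). Either the forward orbit of $x$ stays in $Q$ forever, in which case the pigeonhole principle on the finite set $Q$ forces it to be eventually periodic, so the itinerary inherits eventual periodicity; or the orbit eventually leaves $Q$, after which it lies in $I\setminus Q$ and the preceding reduction applies. Either way, the itinerary of $x$ is eventually periodic, completing the proof. I do not anticipate a real obstacle: once the finiteness of $Q$ (hence of $\mathscr{P}$) and the forward invariance of $I\setminus Q$ are in hand, the argument is purely combinatorial, the only mildly delicate point being the bookkeeping for points whose orbits remain trapped in the finite set $Q$.
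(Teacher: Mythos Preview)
Your proposal is correct and follows essentially the same approach as the paper's proof: both use the finite invariant quasi-partition $\mathscr{P}$ from Theorem~\ref{Qifinite}, observe that each $J_\ell$ lies in a single $I_i$ (since $x_1,\ldots,x_{n-1}\in Q$), reduce the itinerary to the orbit of the induced self-map on the finite set $\mathscr{P}$, and then handle the remaining points in $\{0\}\cup Q$ by the dichotomy ``orbit stays in the finite set'' versus ``orbit eventually enters some $J_\ell$''. Your explicit verification that $I\setminus Q$ is forward invariant is a slightly more detailed phrasing of the paper's final paragraph, but the substance is identical.
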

\begin{proof} 
By Theorem \ref{Qifinite}, $Q$ is finite, thus $f$ has an  invariant quasi-partition $\mathscr{P}=\{J_{\ell}\}_{\ell=1}^m$ as in Definition \ref{partition}. By Lemma \ref{tmt}, there exists a map
$\tau:\{1,\ldots,m\}\to \{1,\ldots,m\}$ such that $f\left(J_\ell \right)\subset J_{\tau(\ell)}$ for every $1\le \ell \le m$. Let $1 \le \ell_0 \le m$ and $\{\ell_k\}_{k=0}^\infty$ be the sequence defined recursively by  
$\ell_{k+1}=\tau (\ell_k)$ for every  $k\ge 0$.
It is elementary that  the sequence $\{\ell_k\}_{k=0}^\infty$ is eventually periodic.
We have that $x_i\in Q$ (see (\ref{tap})) for every $1\le i \le n-1$, therefore, by (P1), there exists a unique map $\eta: \{1,\ldots ,m\} \to \{1,\ldots ,n\}$ satisfying $ J_{\ell} \subset I_{\eta(\ell)}$ for every $1 \le \ell \le m$,
Hence, the sequence $\{\eta(\ell_k)\}_{k=0}^\infty$ is eventually periodic and,
by definition, so is the itinerary of any $x \in  J_{\ell_0}$.

Now let $x\in \{0\}\cup Q$. If $\{x,f(x),f^2(x),\ldots\}\subset Q$, then the orbit of $x$ is finite and so its itinerary is eventually  periodic. Otherwise, there exist  $1\le \ell_0 \le m$ and $k\ge 1$ such that $f^k(x)\in J_{\ell_0}$. By the above, the itinerary of $f^k(x)$ is eventually periodic and so is that of $x$. This proves the lemma.
\end{proof}

 \begin{proof}[Proof of Theorem \ref{pr1}]
 Let $(x_1,\ldots,x_{n-1})\in\Omega_{n-1}\cap W_{\Phi}^{n-1}$ and $f=f_{\phi_1,\ldots,\phi_n,x_1,\ldots,x_{n-1}}$.
 By Theorem \ref{Qifinite}, $Q$ is finite, thus $f$ has an  invariant quasi-partition $\mathscr{P}=\{J_{\ell}\}_{\ell=1}^m$ as in Definition \ref{partition}. Let $1\le \ell_0 \le m$ and $x\in  J_{\ell_0}$. In the proof of Lemma \ref{ipartition}, it was proved that the itinerary of $x$ in  $\mathscr{P}$, $\{\ell_k\}_{k=0}^{\infty}$,   is eventualy periodic. Therefore there exist an integer $s \ge 0$ and an even integer $p\ge 2$  such that 
  $\ell_{s}=\ell_{s+p}$. As $\mathscr{P}$ is invariant under $f$,
  $f^{p}(J_{\ell_s})\subset  J_{\ell_{s+p}}=J_{\ell_s}$. Hence, if $J_{\ell_s}=(c,d)$, there exist $0\le c\le c'\le d'\le d\le 1$ such that $\overline{f^{p}\left((c,d)\right)}= [c',d']$. We claim that  $c'>c$.
Assume by contradiction that $c'=c$. As $f^{p}\vert_{(c,d)}$ is a nondecreasing contractive map, there exist $0\le \eta<\epsilon$ such that
 $\overline{f^{p}((c, c+\epsilon))}= [c, c+\eta]$. By induction, for every integer $k\ge 1$, there exist
 $0\le \eta_k<\epsilon_k$ such that $\overline{f^{kp}((c,c+\epsilon_k))}= [c, c+\eta_k]$. Hence,
 $$ c\in\bigcap_{k\ge 1}\bigcup_{h\in\mathscr{C}_{kp}} h(I)=\bigcap_{k\ge 1} A_{kp}=
 \bigcap_{k\ge 0} A_k.
 $$
This contradicts the fact that $c\in\partial J_{\ell_s}\subset \{0,1\}\cup Q\subset I\setminus\cap_{k\ge 0} A_k$ (see \mbox{Theorem \ref{Qifinite}}). We conclude therefore that $c'>c$. Analogously, $d'<d$.
In this way, there exists $\xi>0$ such that
$f^{p}\left((c,d)\right)\subset (c+\xi,d-\xi)$.  As $f^{p}\vert_{(a,b)}$ is a continuous contraction, $f^{p}$ has a unique fixed point $z\in (c,d)$.
Notice that $O_f(z)$ is a periodic orbit. Moreover, it is clear that $\omega(x)=O_f(z)$ for every $x\in J_{\ell_0}$.

Now let $x\in I \setminus\cup_{\ell=1}^m J_{\ell}=\{0\} \cup Q$.
By the proof of Lemma \ref{ipartition}, either  $O_f(x)$ is contained in the finite set $I \setminus\cup_{\ell=1}^m J_{\ell}$ (and thus is finite)
or there exists  $k\geq 1$ such that $f^k(x)\in\cup_{\ell=1}^m J_{\ell}$. By the above, 
in either case, $\omega_f(x)$ is a periodic orbit.
\end{proof}

 \section{Iterated Function Systems}\label{epi}
 
 In this section we prove the following improvement of Theorem \ref{pr1}.
   
\begin{theorem}\label{mr1}
Let $\Phi=\{\phi_1,\ldots,\phi_n\}$ be an IFS formed by $\kappa$-Lipschitz functions, with $0\le \kappa <\frac12$, defined on $I=[0,1)$, then there exists a full set 
$W_{\Phi}\subset I$ such that for every $(x_1,\ldots,x_{n-1})\in\Omega_{n-1}(I)\cap W_{\Phi}^{n-1}$, the  PC $f_{\phi_1,\ldots,\phi_n,x_1,\ldots,x_{n-1}}$ has an  invariant quasi-partition and is asymptotically periodic.
\end{theorem}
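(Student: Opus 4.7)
The plan is to adapt the argument of Theorem \ref{pr1}, but since the universal sets $A_k=\bigcup_{h\in\mathscr{C}_k}h(\bar I)$ need not have measure tending to zero under the weaker hypothesis $\kappa<\tfrac12$ (one only gets $\mu(A_k)\le(n\kappa)^k$), I would replace them by the dynamical images $f^k(I)$. The key observation is that for any PC $f=f_{\phi_1,\ldots,\phi_n,x_1,\ldots,x_{n-1}}$ built from $\Phi$, the itinerary-branches of length $k$ partition $I$ and each is mapped by a $\kappa^k$-Lipschitz composition, so
\[
\mu\bigl(f^k(I)\bigr)\le\kappa^k\sum_\omega|B_\omega|=\kappa^k,
\]
and the sequence $\{f^k(I)\}$ is nested decreasing because $f(I)\subset I$. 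Hence the ``dynamical attractor'' $\bigcap_k f^k(I)$ has Lebesgue measure zero for every choice of $(x_1,\ldots,x_{n-1})$. If no $x_i$ lies in this attractor, then for each $i$ there is $k_i$ with $x_i\notin f^{k_i}(I)$, which by nesting forces $f^{-k}(\{x_i\})=\emptyset$ for all $k\ge k_i$; together with a Lemma \ref{st2}-style argument applied to the finitely many remaining levels, this shows $Q$ is finite and the invariant quasi-partition $\mathscr{P}$ exists via Definition \ref{partition}.

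The full set $W_\Phi$ would be built as $W_2\cap W$, with $W_2$ from Lemma \ref{st2} and $W$ a new full set guaranteeing that no discontinuity lies in the attractor. To construct $W$, I would show that for each $i$ and $k$ the set
\[
Z_{k,i}=\{(x_1,\ldots,x_{n-1})\in\Omega_{n-1}:x_i\in f^k_{x_1,\ldots,x_{n-1}}(I)\}
\]
satisfies $\mu^{(n-1)}(Z_{k,i})\le C\kappa^k$, so that Borel--Cantelli produces a full set on which $x_i\notin\bigcap_k f^k(I)$ for every $i$.

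The main obstacle is the measure estimate on $Z_{k,i}$: the set $f^k(I)$ depends on the partition, and a direct Fubini over $(x_i,y)$ collapses onto the zero-measure diagonal. Fixing the other $x_j$'s and varying $x_i$, one observes that $f^k(I)$ is a finite union of intervals whose endpoints are of the form $\phi_{j_k}\circ\cdots\circ\phi_{j_{s+1}}(x_i)$, arising from the branch boundary $y=(f^s)^{-1}(x_i)$ mapped forward by the remainder of the composition, and these endpoints are $\kappa^{k-s}$-Lipschitz in $x_i$ with $k-s\ge 1$. Intersecting the moving graph of each such interval with the diagonal $y=x_i$, and using the contraction threshold $\kappa<\tfrac12$ to guarantee that the geometric factor $1/(1-\kappa)$ is $<2$ so the resulting fixed-point comparison can be iterated, one obtains a slice of length $O(\kappa^k)$; integrating over the remaining coordinates then yields $\mu^{(n-1)}(Z_{k,i})\le C\kappa^k$, which is summable in $k$.

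Once $Q$ is finite, Lemma \ref{tmt} gives the $f$-invariance of $\mathscr{P}=\{J_\ell\}$, and asymptotic periodicity follows by the Banach fixed point argument in the proof of Theorem \ref{pr1} applied to $f^p|_{J_\ell}$ for an even combinatorial period $p$. The only adaptation needed is to replace the reliance on $c,d\notin\bigcap_k A_k$ (which required the highly contractive hypothesis) by $c,d\notin\bigcap_k f^k(I)$, which is valid on $W_\Phi^{n-1}$ because $\partial J_\ell\subset\{0,1\}\cup Q$ and each point of $Q$ avoids the attractor by construction of $W$. The delicate point throughout is the diagonal-slice estimate, which is where the threshold $\kappa<\tfrac12$ is used in an essential way.
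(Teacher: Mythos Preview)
Your route is genuinely different from the paper's, but it has a structural gap that I do not see how to close.

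The theorem asks for a full set $W_\Phi\subset I$ such that the conclusion holds on $\Omega_{n-1}\cap W_\Phi^{\,n-1}$, i.e.\ a \emph{product} set. Your Borel--Cantelli argument, applied to the sets $Z_{k,i}\subset\Omega_{n-1}$, produces a full-measure subset $\Omega'\subset\Omega_{n-1}$, and there is no mechanism in your outline that upgrades $\Omega'$ to the form $\Omega_{n-1}\cap W^{n-1}$. The obstruction is intrinsic to your strategy: the dynamical attractor $\bigcap_k f^k(I)$ depends on the \emph{entire} tuple $(x_1,\ldots,x_{n-1})$, so the condition ``$x_i$ lies in the attractor'' is not a condition on $x_i$ alone, and a null set in $\Omega_{n-1}$ satisfying such a diagonal condition need not be covered by the complement of any product $W^{n-1}$. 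This product structure is not cosmetic: it is exactly what is needed later when Theorem~\ref{abcd} restricts to the one-dimensional family $x_i=c_i-\delta/(1-\lambda)$, a line in $\Omega_{n-1}$ that a generic full-measure subset of $\Omega_{n-1}$ could miss entirely.

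A secondary issue is that the slice estimate $\mu^{(n-1)}(Z_{k,i})\le C\kappa^k$ is only sketched. As $x_i$ varies (with the other coordinates frozen), the branch structure of $f^k$ changes discontinuously whenever a preimage $f^{-s}(x_i)$ collides with some $x_j$, so the interval endpoints you describe are only piecewise defined and branches appear and disappear; the bookkeeping needed to sum the diagonal contributions across these regimes is not addressed.

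The paper avoids both difficulties by a localisation trick that decouples the relevant null set from the partition points. Fixing a centre $(z_1,\ldots,z_{n-1})$ and a scale $\delta=\tfrac13\min_i(z_i-z_{i-1})$, one replaces each $\phi_i$ by a map $\varphi_i$ equal to $\phi_i$ on $[z_{i-1}-\delta,z_i+\delta]$ and constant outside. At any point at most two of the supports overlap, so $\sum_i|D\varphi_i|\le 2\kappa<1$ and $\Upsilon=\{\varphi_1,\ldots,\varphi_n\}$ is highly contractive; for every $(y_1,\ldots,y_{n-1})$ in the neighbourhood $V$ of the centre one has $f_{\varphi,y}=f_{\phi,y}$. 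Now Theorem~\ref{pr1} applied to $\Upsilon$ yields a full set $W_\Upsilon\subset I$ depending only on the centre, and covering $\Omega_{n-1}$ by the neighbourhoods of countably many rational centres and intersecting the corresponding $W_\Upsilon$'s gives a single full $W_\Phi\subset I$ with the required product property. This is where the hypothesis $\kappa<\tfrac12$ enters: it makes the locally modified system highly contractive, so that the partition-independent attractor $\bigcap_k A_k(\Upsilon)$ is null and one can work entirely with subsets of $I$.
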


Theorem \ref{mr1} will be deduced from Theorem \ref{pr1} in the following way.
First, we show that  the IFS $\Phi$ can be  locally replaced by a highly contractive IFS $\Upsilon$ and then that the local substitution suffices to prove  \mbox{Theorem \ref{mr1}}.

Hereafter, let  $(x_1,\ldots,x_{n-1})\in \Omega_{n-1}$ be fixed. Set  $x_0=0$, $x_n=1$,
\begin{equation}\label{delta} 
 \delta=\min_{1\le i\le n}  \frac{x_{i}-x_{i-1}}{3}\; \textrm{and} \;\;
V(x_1,\ldots,x_{n-1})=\{(y_1,\ldots,y_{n-1})\in\Omega_{n-1}: \vert y_i-x_i\vert< \delta,\forall i\}. 
\end{equation}

In what follows, let $0\le \kappa<\frac12$ and $\phi_1,\ldots,\phi_n:[0,1]\to (0,1)$ be $\kappa$-Lipschitz contractions.  Let
$\Upsilon=\{\varphi_1,\ldots,\varphi_n\}$ be the IFS defined by 
$$
\varphi_1(x)=\begin{cases} \phi_1(x)\,\,\phantom{aaaa}\textrm{, if}\,\,x\in[0,x_{1}+\delta] \\ \phi_1(x_1+\delta)\,\,\phantom{}\textrm{, if}\,\,x\in[x_{1}+\delta,1] 
\end{cases},\; \varphi_n(x)=\begin{cases}\phi_n(x_{n-1}-\delta)\,\,\phantom{}\textrm{, if}\,\,x\in[0,x_{n-1}-\delta] \\   \phi_n(x)\,\,\phantom{aaaaaa}\textrm{, if}\,\,x\in[x_{n-1}-\delta,1] 
\end{cases}
$$
$$
\varphi_i(x)=\begin{cases} \phi_i(x_{i-1}-\delta)\,\,\textrm{, if}\,\,x\in [0,x_{i-1}-\delta]\\ \phi_i(x)\,\,\phantom{aaaaaa}\textrm{, if}\,\,x\in[x_{i-1}-\delta,x_{i}+\delta] \\ \phi_i(x_i+\delta)\,\,\phantom{aa}\textrm{, if}\,\,x\in [x_{i}+\delta,1]
\end{cases},\quad 2\le i \le n-1.
$$
\begin{figure}[h]
\begin{center}
\includegraphics[width=\linewidth]{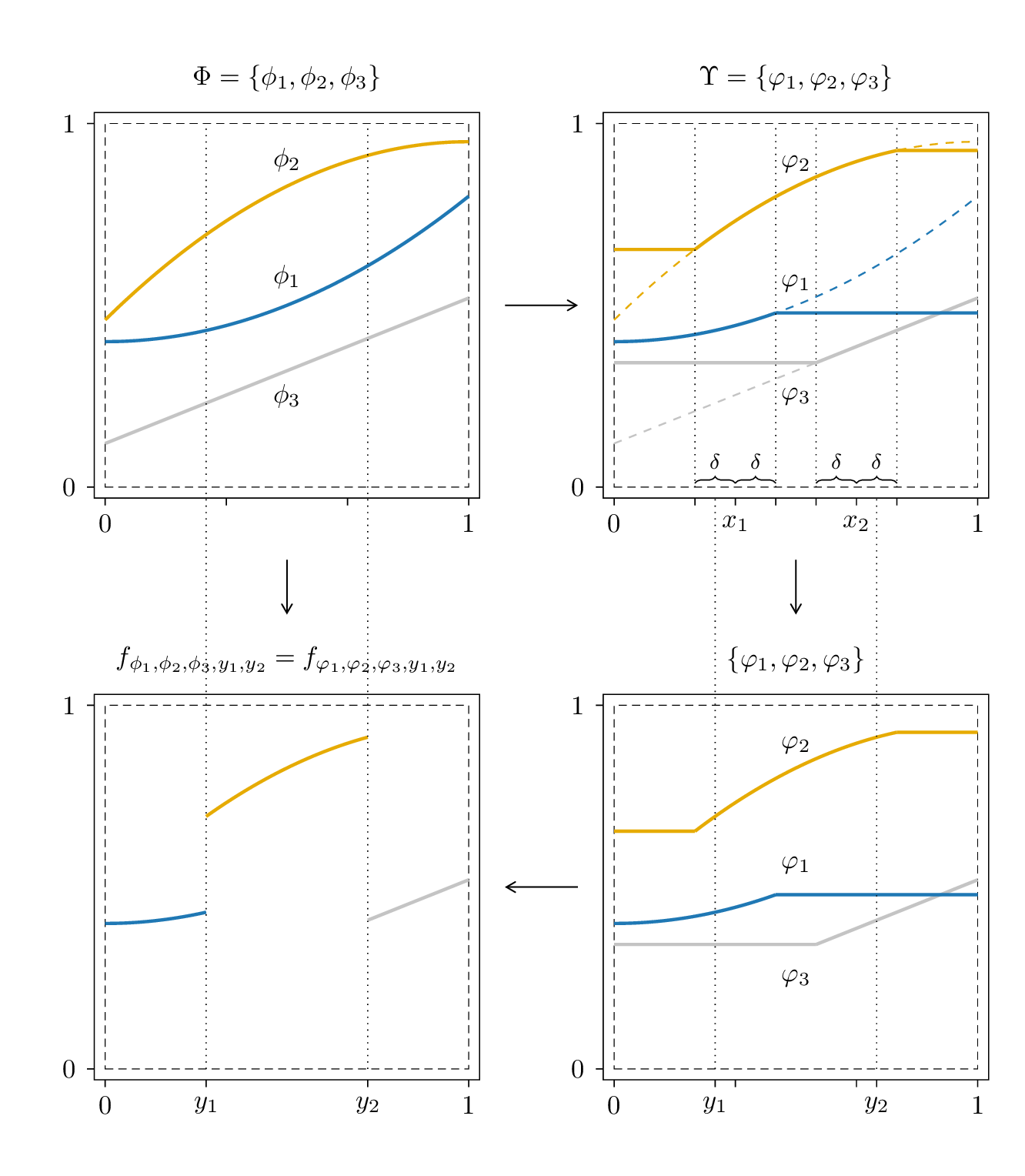}
\caption{Relations between the IFS $\{\phi_1,\phi_2,\phi_3\}$ and $\{\varphi_1,\varphi_2,\varphi_3\}$.}\label{fig:ifss}
\end{center}
\end{figure}

A scheme illustrating the construction of the
IFS $\{\varphi_1,\ldots,\varphi_n\}$ from the IFS $\{\phi_1,\ldots,\phi_n\}$ is shown in Figure \ref{fig:ifss}.
 
 \begin{lemma}\label{ispecial} 
The IFS $\Upsilon=\{\varphi_1,\ldots,\varphi_n\}$ is highly contractive.
 \end{lemma}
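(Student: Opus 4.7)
The goal is to exhibit a constant $0\le \rho<1$ such that $\sum_{i=1}^n |D\varphi_i(x)|\le \rho$ for almost every $x\in[0,1]$. The first step is the observation that each $\varphi_i$ coincides with $\phi_i$ on a single ``core'' interval $K_i$ and is constant on each piece of the complement; explicitly, $K_1=[0,x_1+\delta]$, $K_i=[x_{i-1}-\delta,x_i+\delta]$ for $2\le i\le n-1$, and $K_n=[x_{n-1}-\delta,1]$. Since a constant map has zero derivative and $\phi_i$ is $\kappa$-Lipschitz, the chain rule yields, for almost every $x$,
\[
|D\varphi_i(x)|=|D\phi_i(x)|\,\mathbf{1}_{K_i}(x)\le \kappa\,\mathbf{1}_{K_i}(x).
\]

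The core of the argument is to bound the counting function $N(x):=\#\{i : x\in K_i\}$ by $2$. The definition of $\delta$ gives $x_j-x_{j-1}\ge 3\delta$ for every $j$, so whenever $|i-j|\ge 2$ we have
\[
x_{j-1}-\delta-(x_i+\delta)\ge (x_{j-1}-x_i)-2\delta\ge 3\delta-2\delta=\delta>0
\]
(in the relevant range of indices), which forces $K_i\cap K_j=\emptyset$ as soon as $|i-j|\ge 2$. Hence at every point $x$ at most two (necessarily consecutive) cores $K_i$ contain $x$, so $N(x)\le 2$.

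Combining the two steps gives
\[
\sum_{i=1}^n |D\varphi_i(x)|\le \kappa\,N(x)\le 2\kappa
\]
almost everywhere, and the hypothesis $\kappa<\tfrac12$ allows us to take $\rho=2\kappa<1$, confirming that $\Upsilon$ satisfies the defining inequality (\ref{rho}) for a highly contractive IFS.

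There is no serious obstacle here: the proof is essentially a verification that the truncation of $\phi_i$ to $K_i$ exchanges injectivity for a tightly controlled overlap pattern, and the factor $3$ built into the definition of $\delta$ is precisely the slack needed to guarantee $K_i\cap K_{i+2}=\emptyset$, which is the one place where the numerical constants must be matched.
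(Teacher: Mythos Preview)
Your proof is correct and follows essentially the same approach as the paper. The paper compresses the argument into the single displayed inequality
\[
\vert D\varphi_1(x)\vert+\cdots+\vert D\varphi_n(x)\vert\le \max_{1\le i\le n-1} \left(\vert D\phi_{i}(x)\vert+\vert D\phi_{i+1}(x)\vert\right)\le 2\kappa<1,
\]
leaving the ``at most two consecutive cores overlap'' observation implicit, whereas you make the intervals $K_i$ and the separation estimate $K_i\cap K_j=\emptyset$ for $|i-j|\ge 2$ explicit; the content is the same.
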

\begin{proof} 
It is clear that each $\varphi_i:[0,1]\to (0,1)$ is a Lipschitz contraction, therefore for almost every $x\in I$, $D\varphi_i(x)$ exists, and, by the definition of $\varphi_i$, 
$$
\vert D\varphi_1(x)\vert+\cdots+\vert D\varphi_n(x)\vert\le \max_{1\le i\le n-1} \left(\vert D\phi_{i}(x)\vert+\vert D\phi_{i+1}(x)\vert\right)\leq 2\kappa<1.
$$
Therefore, $\{\varphi_1,\ldots,\varphi_n\}$ is highly contractive which concludes the proof.
\end{proof}

In what follows, let $V=V(x_1,\ldots,x_{n-1})$ be as in (\ref{delta}).

\begin{lemma}\label{replace} 
For every $(y_1,\ldots,y_{n-1})\in V$,  
$$
f_{\varphi_1,\ldots,\varphi_n,y_1,\ldots,y_{n-1}}= f_{\phi_1,\ldots,\phi_n,y_1,\ldots,y_{n-1}}.
$$
\end{lemma}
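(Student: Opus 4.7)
The plan is to unfold the two definitions of the piecewise contraction and verify, piece by piece, that on each of the intervals $[y_{i-1},y_i)$ (with the convention $y_0=0$, $y_n=1$) the contractions $\phi_i$ and $\varphi_i$ coincide. Once this is done, the equality of the two PCs follows immediately from the definition $(\ref{mapf})$.

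First I would fix $(y_1,\dots,y_{n-1})\in V$ and record the inequalities that membership in $V$ gives us: by $(\ref{delta})$, $\vert y_i-x_i\vert<\delta$ for $1\le i\le n-1$, i.e.\ $x_i-\delta<y_i<x_i+\delta$. The bound $\delta\le (x_i-x_{i-1})/3$ also guarantees that $0<y_1<\dots<y_{n-1}<1$, so the partition $[y_{i-1},y_i)$ of $I$ is well defined.

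Next I would split into three cases matching the three cases in the definition of $\varphi_i$. For $i=1$, on $[0,y_1)$ we have $x\le y_1<x_1+\delta$, and $\varphi_1$ coincides with $\phi_1$ on $[0,x_1+\delta]$, so $\varphi_1(x)=\phi_1(x)$ there. For $2\le i\le n-1$, on $[y_{i-1},y_i)$ we have $x_{i-1}-\delta<y_{i-1}\le x<y_i<x_i+\delta$, so the interval is contained in $[x_{i-1}-\delta,x_i+\delta]$, where $\varphi_i$ equals $\phi_i$ by its middle piece. For $i=n$, on $[y_{n-1},1)$ we have $x_{n-1}-\delta<y_{n-1}\le x<1$, so $\varphi_n(x)=\phi_n(x)$.

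Combining the three cases, we obtain that for every $x\in I$, both maps take the same value, namely $\phi_i(x)=\varphi_i(x)$ on the unique piece $[y_{i-1},y_i)$ containing $x$. There is no real obstacle here: the lemma is essentially just a record of the fact that the modifications defining $\Upsilon$ take place \emph{outside} the pieces determined by any discontinuity vector $(y_1,\ldots,y_{n-1})\in V$, the open neighbourhood of $(x_1,\ldots,x_{n-1})$ chosen precisely to make this true. The only care needed is to check that the endpoint inequalities are strict in the right direction, which the choice of $\delta$ secures.
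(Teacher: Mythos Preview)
Your proof is correct and follows essentially the same approach as the paper: you show that each partition interval $K_i=[y_{i-1},y_i)$ lies inside the region where $\varphi_i$ was defined to agree with $\phi_i$, using the inequalities $x_{i}-\delta<y_i<x_i+\delta$ coming from membership in $V$. The paper's argument is the same, just written more tersely.
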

 \begin{proof} 
Let $(y_1,\ldots,y_{n-1})\in V$, $y_0=0$ and $y_n=1$. Set $K_i=[y_{i-1},y_i)$ for  $1\le i\le {n}$.  Since $\vert y_i-x_i\vert<\delta$, we have that
$$
K_1\subset [0,x_1+\delta],  K_n\subset [x_{n-1}-\delta,1]\,\,\textrm{and}\,\, K_i\subset [x_{i-1}-\delta,x_i+\delta],\quad 2\le i\le n-1.
$$ 
This together with the definition of $\varphi_i$ yields $\varphi_i\vert_{K_i}=\phi_i\vert_{K_i}$, for every $1\le i\le n$.
 \end{proof}
 
 Now we will apply the results of Section \ref{IFS} to the highly contractive IFS $\Upsilon=\{\varphi_1,\ldots,\varphi_n\}$ (see Lemma \ref{ispecial}). With respect to such IFS, 
 let $W_{\Upsilon}\subset I$ be the full set defined in equation (\ref{WW}). 
 Notice that all the claims in Section \ref{IFS} hold true for the IFS $\Upsilon$. In particular, we have that
 $V\cap W_{\Upsilon}^{n-1}=V$ almost surely. 
 
 \begin{theorem}\label{lmim} 
For every
 $(y_1,\ldots,y_{n-1})\in V\cap W_{\Upsilon}^{n-1}$, $f= f_{\phi_1,\ldots,\phi_n,y_1,\ldots,y_{n-1}}$ has an invariant quasi-partition and is asymptotically periodic.
 \end{theorem}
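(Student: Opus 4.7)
The plan is to reduce Theorem \ref{lmim} to the highly contractive case already established in Theorem \ref{pr1} via the local substitution provided by Lemma \ref{replace}. Since we have ensured that the hypothesis $(y_1,\ldots,y_{n-1})\in V$ puts every discontinuity point $y_i$ inside the region where $\varphi_i$ and $\phi_i$ coincide, the two piecewise contractions are literally the same map; this is the content of Lemma \ref{replace}. Consequently, any dynamical property proved for $f_{\varphi_1,\ldots,\varphi_n,y_1,\ldots,y_{n-1}}$ transfers verbatim to $f = f_{\phi_1,\ldots,\phi_n,y_1,\ldots,y_{n-1}}$.

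First, I would invoke Lemma \ref{ispecial} to record that $\Upsilon = \{\varphi_1,\ldots,\varphi_n\}$ is a highly contractive IFS on $I = [0,1)$, so the preparatory machinery of Section \ref{IFS} applies with $\Phi$ replaced by $\Upsilon$. In particular, the full set $W_{\Upsilon}$ defined in \eqref{WW} satisfies the hypotheses of Theorem \ref{Qifinite} and Theorem \ref{pr1} for the IFS $\Upsilon$. Since by assumption $(y_1,\ldots,y_{n-1})\in V \cap W_{\Upsilon}^{n-1}$, and $V\subset \Omega_{n-1}$ by \eqref{delta}, the tuple $(y_1,\ldots,y_{n-1})$ lies in $\Omega_{n-1}\cap W_{\Upsilon}^{n-1}$.

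Next I would apply Theorem \ref{Qifinite} to the PC $g := f_{\varphi_1,\ldots,\varphi_n,y_1,\ldots,y_{n-1}}$ to conclude that the corresponding set $Q_g = \bigcup_{i=1}^{n-1}\bigcup_{k\ge 0} g^{-k}(\{y_i\})$ is finite. By Lemma \ref{replace}, $g = f$, so the set $Q$ associated to $f$ in \eqref{tap} coincides with $Q_g$ and is therefore finite. Definition \ref{partition} then gives $f$ an invariant quasi-partition $\mathscr{P} = \{J_{\ell}\}_{\ell=1}^m$. Finally, applying Theorem \ref{pr1} to the highly contractive IFS $\Upsilon$ yields that $g$ is asymptotically periodic, and the equality $g = f$ gives the same conclusion for $f$.

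There is essentially no obstacle here beyond verifying that the two pieces (the replacement lemma and the highly contractive theorem) fit together correctly; the only detail worth double-checking is that $V \subset \Omega_{n-1}$, which is immediate from the choice of $\delta$ in \eqref{delta} that forces $0 < y_1 < \cdots < y_{n-1} < 1$ throughout the neighbourhood $V$. With that observation in hand, Theorem \ref{lmim} follows as a direct corollary of the results already assembled.
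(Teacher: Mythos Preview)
Your proposal is correct and follows essentially the same approach as the paper: use Lemma \ref{replace} to identify $f$ with $f_{\varphi_1,\ldots,\varphi_n,y_1,\ldots,y_{n-1}}$, invoke Lemma \ref{ispecial} to see that $\Upsilon$ is highly contractive, and then apply Theorems \ref{Qifinite} and \ref{pr1}. The paper's own proof is a two-line version of exactly this argument; your extra care in checking $V\subset\Omega_{n-1}$ and spelling out that $Q_g=Q$ is welcome but not logically different.
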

 \begin{proof} 
Let $(y_1,\ldots,y_{n-1})\in V\cap W_{\Upsilon}^{n-1}$.
 By Lemma \ref{replace}, $f=f_{\varphi_1,\ldots,\varphi_n,y_1,\ldots,y_{n-1}}$. By \mbox{Lemma \ref{ispecial}}, we have that $\Upsilon$
 fulfills the hypotheses of Theorems \ref{pr1} and  \ref{Qifinite}. Hence, $f$ has an  invariant quasi-partition and is asymptotically periodic.  
 \end{proof}

We stress that, in the previous results, $\delta=\min_{1\le i\le n} (x_{i}-x_{i-1})/{3}$,
the set $V$ and the IFS $\Upsilon$ depend on the point $(x_1,\ldots,x_{n-1})$. For this reason,
in the next proof, we replace $V$ and $W_{\Upsilon}$ by $V(x_1,\ldots,x_{n-1})$ and $W(x_1,\ldots,x_{n-1})$, respectively.
\medskip

\begin{proof}[Proof of Theorem \ref{mr1}]
Set $\Phi=\{\phi_1,\ldots,\phi_n\}$.
 First we show that
\begin{equation}\label{grande}
\bigcup_{(z_1,\ldots,z_{n-1}) \in \Omega_{n-1}\cap \mathbb{Q}^{n-1}} V(z_1,\ldots,z_{n-1}) = \Omega_{n-1}.
\end{equation}
 Let $(x_1,\ldots,x_{n-1}) \in \Omega_{n-1}$ and  $\delta_0=\delta(x_1,\ldots,x_{n-1})$. Let  $(z_1,\ldots,z_{n-1}) \in \Omega_{n-1}\cap \mathbb{Q}^{n-1}$ be such that  $\vert z_i- x_i \vert < \frac 12 {\delta_0}$ for every $1\le i \le n-1$.
 Set $z_0=0$ and $z_n=1$.
We have that for every $1\le j \le n$, $z_{j} - z_{j-1}> (x_j-\frac 12 {\delta_0}) - (x_{j-1}+\frac 12 {\delta_0})\ge 3 {\delta_0} - {\delta_0} =2 {\delta_0}$. 
In this way, $\delta(z_1,\ldots,z_{n-1})> \frac 23 \delta_0 > \frac 12 \delta_0$, thus $(x_1,\ldots,x_{n-1}) \in V(z_1,\ldots,z_{n-1})$. 
This proves (\ref{grande}).
 
Let $(z_1,\ldots,z_{n-1}) \in \Omega_{n-1}\cap \mathbb{Q}^{n-1}$ and let $W(z_1,\ldots,z_{n-1})$  be the full set in $I$ defined  by (\ref{WW}).  By Theorem \ref{lmim}, for every 
$(y_1,\ldots,y_{n-1}) \in V(z_1,\ldots,z_{n-1}) \cap W(z_1,\ldots,z_{n-1})^{n-1}$,
the map $ f_{\phi_1,\ldots,\phi_n,y_1,\ldots,y_{n-1}}$ has an  invariant quasi-partition and is asymptotically periodic. The denumerable intersection
\begin{equation}\label{w}
W_{\Phi}=\bigcap_{(z_1,\ldots,z_{n-1}) \in \Omega_{n-1}\cap \mathbb{Q}^{n-1}} W(z_1,\ldots,z_{n-1})
\end{equation}
is a full set
 and, for every $(y_1,\ldots,y_{n-1}) \in V(z_1,\ldots,z_{n-1})\cap W_{\Phi}^{n-1}$, the map $ f_{\phi_1,\ldots,\phi_n,y_1,\ldots,y_{n-1}}$ has an  invariant quasi-partition and is asymptotically periodic.
This together with (\ref{grande}) concludes the proof.
\end{proof}

\section{Asymptotic periodicity: the general case}\label{gc}

In this section we prove the following improvement of Theorem \ref{mr1}.

\begin{theorem}\label{toconclude2} Let $\Phi=\{\phi_1,\ldots,\phi_n\}$ be an IFS formed by $\rho$-Lipschitz functions defined on $I=[0,1)$ with $0\le\rho<1$, then there exists a full set 
$W_{0}\subset I$ such that for every $(x_1,\ldots,x_{n-1})\in\Omega_{n-1}(I)\cap W_{0}^{n-1}$, the  PC $f_{\phi_1,\ldots,\phi_n,x_1,\ldots,x_{n-1}}$ has an  invariant quasi-partition and is asymptotically periodic.
\end{theorem}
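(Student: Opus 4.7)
The strategy is to reduce Theorem \ref{toconclude2} to Theorem \ref{mr1} by passing to a sufficiently high iterate of the map, so that each branch of the iterate is $\kappa$-Lipschitz with $\kappa<\tfrac12$. Choose $N\ge 1$ with $\rho^N<\tfrac12$ and let $\Psi=\{\phi_{i_N}\circ\cdots\circ\phi_{i_1}:(i_1,\ldots,i_N)\in\{1,\ldots,n\}^N\}$ be the IFS of $N$-fold compositions. Every element of $\Psi$ is $\rho^N$-Lipschitz and maps $\overline I$ into $\mathring I$, so every ordered sub-collection of $\Psi$ satisfies the hypothesis of Theorem \ref{mr1}.

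For every ordered tuple $\Psi'=(H_1,\ldots,H_{m+1})\in\Psi^{m+1}$ with $1\le m+1\le n^N$, Theorem \ref{mr1} supplies a full set $W_{\Psi'}\subset I$. Since there are only finitely many such tuples, the intersection $W^{\ast}=\bigcap_{\Psi'}W_{\Psi'}$ is still full. Define
\[
W_0=\Bigl\{x\in W^{\ast}:g^{-1}(\{x\})\text{ is finite and contained in }W^{\ast}, \ \forall\,g\in\textstyle\bigcup_{j=0}^{N-1}\mathscr{C}_j(\Phi)\Bigr\}.
\]
For each $g\in\bigcup_{j=0}^{N-1}\mathscr{C}_j(\Phi)$ (a finite collection), Lemma \ref{st2} ensures $g^{-1}(\{x\})$ is finite for almost every $x$, and the Lipschitz image $g((W^{\ast})^c)$ of a null set is null, so $g^{-1}(\{x\})\subset W^{\ast}$ for almost every $x$. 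Hence $W_0$ is a full set.

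Fix $(x_1,\ldots,x_{n-1})\in W_0^{n-1}\cap\Omega_{n-1}$ and let $f=f_{\phi_1,\ldots,\phi_n,x_1,\ldots,x_{n-1}}$. The discontinuity set $S_N=\bigcup_{j=0}^{N-1}f^{-j}(\{x_1,\ldots,x_{n-1}\})$ of $f^N$ is contained in $\bigcup_{i,j,g}g^{-1}(\{x_i\})$ with $g$ ranging over $\mathscr{C}_j(\Phi)$ for $0\le j\le N-1$; by the defining property of $W_0$, this is a finite subset of $W^{\ast}$. Enumerate $S_N=\{y_1<\cdots<y_m\}$; each connected component of $[0,1)\setminus S_N$ is a region of constant first-$N$ itinerary, on which $f^N$ coincides with some $H_\ell\in\Psi$. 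Therefore $f^N=f_{H_1,\ldots,H_{m+1},y_1,\ldots,y_m}$, and since $(y_1,\ldots,y_m)\in(W^{\ast})^m\subset W_{\Psi'}^m$ for the tuple $\Psi'=(H_1,\ldots,H_{m+1})$, Theorem \ref{mr1} asserts that $f^N$ has an invariant quasi-partition and is asymptotically periodic. A direct computation splitting exponents modulo $N$ shows that the set $Q$ in \eqref{tap} coincides for $f$ and $f^N$, so $f$ shares the same invariant quasi-partition; the asymptotic periodicity transfers because the periodic points of $f^N$ produced in the proof of Theorem \ref{pr1} lie in the interior of the quasi-partition cells, hence avoid the discontinuities of $f$.

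The main obstacle is the transfer of asymptotic periodicity from $f^N$ back to $f$ at the level of individual orbits: one must confirm that the $f$-forward orbit of any periodic point of $f^N$ is a genuine periodic orbit of $f$, which rests on the periodic point being interior to its quasi-partition cell and thus disjoint from $\{x_1,\ldots,x_{n-1}\}$. Identifying $f^N$ precisely with the piecewise contraction $f_{H_1,\ldots,H_{m+1},y_1,\ldots,y_m}$ and matching the right-continuity conventions at the breakpoints of $f^N$ require care but are ultimately routine bookkeeping once the finiteness of $S_N$ and the inclusion $S_N\subset W^{\ast}$ are established.
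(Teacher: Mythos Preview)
Your overall strategy---pass to $f^N$ with $\rho^N<\tfrac12$, identify $f^N$ as a piecewise contraction built from the IFS $\Psi=\mathscr{C}_N(\Phi)$, and invoke Theorem~\ref{mr1}---is exactly the route the paper takes. The construction of $W^\ast$ and of $W_0$ matches the paper's Corollary~\ref{cmr1} and Lemma~\ref{toconclude1}, and your computation that the two sets $Q$ coincide is correct.

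There is, however, a genuine gap in the identification $f^N=f_{H_1,\ldots,H_{m+1},y_1,\ldots,y_m}$, and it is not ``routine bookkeeping''. The map $f$ is right-continuous by construction, but compositions of right-continuous maps need not be right-continuous: if some branch $\phi_i$ is orientation-reversing and an intermediate iterate lands exactly on a discontinuity $x_j$, then $f^N$ may take the value of the \emph{left} limit at the corresponding breakpoint $y_\ell$; worse, if the orbit $y_\ell,f(y_\ell),\ldots,f^{N-1}(y_\ell)$ hits \emph{two} discontinuity points, $f^N(y_\ell)$ can equal $H(y_\ell)$ for some $H\in\Psi$ that is neither the left nor the right branch at $y_\ell$, so $f^N$ is not even of the form covered by an ``either convention'' extension of Theorem~\ref{mr1}. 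The paper resolves this with two ingredients you are missing: it restricts further to the full-measure set $\Omega_{n-1}'$ of~(\ref{ind}), which forces at most one discontinuity to be hit along the first $N$ iterates (Lemma~\ref{oneofu}) and hence guarantees that $f^N$ is left- \emph{or} right-continuous at each $y_\ell$; and it proves an explicit variant of Theorem~\ref{mr1} (Corollary~\ref{sml}, resting on Corollary~\ref{Rem1}) valid for any map $\tilde g$ satisfying (P1)--(P2). Without the $\Omega_{n-1}'$ restriction your argument does not go through.

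A smaller point: your bound $m+1\le n^N$ on the number of branches of $f^N$ is false when the $\phi_i$ are non-injective (a Lipschitz map can have arbitrarily many preimages of a point). The fix is painless---take $W^\ast$ as the intersection over tuples of \emph{all} finite lengths, which is still a countable intersection---but you should not state the bound. Finally, your transfer of asymptotic periodicity from $f^N$ to $f$ is essentially correct; the paper packages the stability argument you sketch as Lemma~\ref{sta}, again using $\Omega_{n-1}'$ to ensure the periodic orbit avoids $\{x_1,\ldots,x_{n-1}\}$.
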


Throughout this section, let $0\le\rho<1$ and $\phi_1,\ldots,\phi_n:[0,1]\to (0,1)$ be $\rho$-Lipschitz contractions. Let $k\ge 1$ be such that $\rho^k<\frac12$. 
 By the Chain rule for Lipschitz maps, $\mathscr{C}_k=\mathscr{C}_k(\phi_1,\ldots,\phi_n)$ \big(see (\ref{ckak})\big) is a collection of at most $n^k$  $\rho^k$-Lipschitz contractions.
 
 For each $r\ge 2$, let $\mathscr{I}_r$ denote the collection of all IFS $\{\psi_1,\ldots,\psi_r\}$, where
 each $\psi_j$, $1\le j\le r$, belongs to  $\mathscr{C}_k(\phi_1,\ldots,\phi_n)$. The collection $\mathscr{I}_r$ consists of at most  $\dfrac{n^k!}{(n^k-r)!}$ IFS. Notice that in an IFS, the order in which the maps are listed matters. The fact that $\rho^k<\frac12$ implies that any IFS in $\cup_{r\ge 2}\mathscr{I}_r$ satisfies the hypothesis of Theorem \ref{mr1}. 
 
 In the statement of Theorem \ref{mr1}, the set $W_{\Phi}$ depends  on the IFS
 $\Phi=\{\phi_1,\ldots,\phi_n\}$. In the next corollary, the set $W$ does not depend on the IFS $\Psi=\{\psi_1,\ldots,\psi_r\}$, provided $\Psi$ is chosen within the denumerable collection $\cup_{r\ge 2}\mathscr{I}_r$.
    
 \begin{corollary}\label{cmr1} There exists a full set $W\subset I$ such that for every
 $r\ge 2$, $\{\psi_1,\ldots,\psi_r\}\in\mathscr{I}_r$ and $(y_1,\ldots,y_{r-1})\in \Omega_{r-1}\cap W^{r-1}$, the
 $r$-interval PC $f_{\psi_1,\ldots,\psi_r,y_1,\ldots,y_{r-1}}$ has an  invariant quasi-partition and is asymptotically periodic.
\end{corollary}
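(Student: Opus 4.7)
The plan is to exploit the fact that $\bigcup_{r\ge 2}\mathscr{I}_r$ is a countable collection (it is a countable union of finite sets, since each $\mathscr{I}_r$ has cardinality at most $n^k!/(n^k-r)!$ and is empty once $r>n^k$), and then to take the intersection of the countably many full sets furnished by Theorem \ref{mr1}.

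More precisely, first I would verify that Theorem \ref{mr1} applies to every $\Psi=\{\psi_1,\ldots,\psi_r\}\in\bigcup_{r\ge 2}\mathscr{I}_r$. Indeed, by construction each $\psi_j$ is a $k$-fold composition of maps from $\{\phi_1,\ldots,\phi_n\}$, hence by the Chain rule for Lipschitz maps is $\rho^k$-Lipschitz, and the integer $k$ was chosen so that $\rho^k<\tfrac12$. Thus Theorem \ref{mr1} yields a full set $W_{\Psi}\subset I$ with the property that for every $(y_1,\ldots,y_{r-1})\in\Omega_{r-1}\cap W_{\Psi}^{r-1}$, the PC $f_{\psi_1,\ldots,\psi_r,y_1,\ldots,y_{r-1}}$ has an invariant quasi-partition and is asymptotically periodic.

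Next I would simply set
$$
W=\bigcap_{r\ge 2}\ \bigcap_{\Psi\in \mathscr{I}_r} W_{\Psi}.
$$
Since this is a countable intersection of full subsets of $I$, $W$ itself is a full subset of $I$. Given any $r\ge 2$, any $\Psi=\{\psi_1,\ldots,\psi_r\}\in\mathscr{I}_r$ and any $(y_1,\ldots,y_{r-1})\in\Omega_{r-1}\cap W^{r-1}$, the inclusion $W\subset W_{\Psi}$ guarantees that $(y_1,\ldots,y_{r-1})\in\Omega_{r-1}\cap W_{\Psi}^{r-1}$, so the conclusion follows from the choice of $W_{\Psi}$.

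There is really no significant obstacle in this argument; its entire content is the observation that the collection $\bigcup_{r\ge 2}\mathscr{I}_r$ is countable, so that one may pass from the one-IFS statement of Theorem \ref{mr1} to a uniform statement valid simultaneously over all IFS built from $k$-fold compositions of the $\phi_i$. The only point worth checking carefully is that the Lipschitz constants of elements of $\mathscr{C}_k$ are bounded by $\rho^k<\tfrac12$, which is exactly the hypothesis imposed when $k$ was fixed at the beginning of the section.
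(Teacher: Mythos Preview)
Your proof is correct and follows essentially the same approach as the paper: apply Theorem \ref{mr1} to each IFS $\Psi\in\mathscr{I}=\bigcup_{r\ge 2}\mathscr{I}_r$ to obtain a full set $W_\Psi$, then set $W=\bigcap_{\Psi\in\mathscr{I}}W_\Psi$, which is full because $\mathscr{I}$ is countable. The only (harmless) difference is that you spell out explicitly why $\mathscr{I}$ is countable and why the Lipschitz hypothesis of Theorem \ref{mr1} is met, which the paper leaves to the surrounding discussion.
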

\begin{proof}  
Let $\mathscr{I}=\cup_{r\ge 2}\mathscr{I}_r$. By \mbox{Theorem \ref{mr1}},  for each
IFS $\Psi=\{\psi_1,\ldots,\psi_r\}\in\mathscr{I}$, there exists a full set $W_{\Psi}\subset I$ such that the following holds: 
for every $(y_1,\ldots,y_{r-1})\in \Omega_{r-1}\cap W_{\Psi}^{r-1}$, the $r$-interval PC
$g=f_{\psi_1,\ldots,\psi_{r},y_1,\ldots,y_{r-1}}$ has an  invariant quasi-partition and is asymptotically periodic. The proof is concluded by taking $W=\cap_{\Psi\in\mathscr{I}} W_{\Psi}$. Since $\mathscr{I}$ is denumerable, we have that $W$ is a full subset of $I$.
\end{proof}

 \begin{corollary}\label{sml} 
 Let $\{\psi_1,\ldots,\psi_r\}\in\mathscr{I}_r$ and $(y_1,\ldots,y_{r-1})\in \Omega_{r-1}\cap W^{r-1}$, where  $r\ge 2$,  satisfy the hypothesis of Corollary \ref{cmr1}.
Let $g=f_{\psi_1,\ldots,\psi_{r},y_1,\ldots,y_{r-1}}$ and   $\tilde{g}:I \to I$ be any map having the following properties:
\begin{itemize}
 \item [(P1)] $\tilde{g}(y)=g(y)$ for every $y\in (0,1)\setminus \{y_1,\ldots,y_{r-1}\}$;
 \item [(P2)] $\tilde{g}(y_j)\in \{\lim_{y\to y_{j}-} g(y),\lim_{y\to y_{j}+} g(y)\}$ for every $1\le j\le r-1$.
 \end{itemize}
 Then the map $\tilde{g}$ has an invariant quasi-partition and is asymptotically periodic.
\end{corollary}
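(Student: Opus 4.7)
The plan is to reduce Corollary \ref{sml} to Corollary \ref{Rem1}, which gives the required finiteness of $\tilde{Q}$ but only under the high-contractivity assumption. The IFS $\Psi=\{\psi_1,\ldots,\psi_r\}$ at hand consists of $\rho^k$-Lipschitz maps with $\rho^k<1/2$, so it satisfies the hypothesis of Theorem \ref{mr1} but is not a priori highly contractive. The bridge will be the localization construction of Section \ref{epi}.

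Concretely, using (\ref{grande}), I would first pick a rational point $(z_1,\ldots,z_{r-1})\in \Omega_{r-1}\cap\mathbb{Q}^{r-1}$ with $(y_1,\ldots,y_{r-1})\in V(z_1,\ldots,z_{r-1})$, and build from $\Psi$ and $(z_1,\ldots,z_{r-1})$ the IFS $\Upsilon=\{\varphi_1,\ldots,\varphi_r\}$ exactly as at the beginning of Section \ref{epi}. By Lemma \ref{ispecial}, $\Upsilon$ is highly contractive, and by Lemma \ref{replace}, $g=f_{\varphi_1,\ldots,\varphi_r,y_1,\ldots,y_{r-1}}$ pointwise on $I$. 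Hence properties (P1) and (P2), as satisfied by $\tilde{g}$ relative to $g$, hold verbatim relative to $f_{\varphi_1,\ldots,\varphi_r,y_1,\ldots,y_{r-1}}$. The hypothesis $(y_1,\ldots,y_{r-1})\in W^{r-1}$ of Corollary \ref{cmr1}, combined with the definition of $W$ through the countable intersection in (\ref{w}) associated to every rational perturbation of $\Psi$, guarantees $(y_1,\ldots,y_{r-1})\in W_{\Upsilon}^{r-1}$.

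With $\Upsilon$ highly contractive and $(y_1,\ldots,y_{r-1})\in V(z_1,\ldots,z_{r-1})\cap W_{\Upsilon}^{r-1}$, Corollary \ref{Rem1} applies to $\tilde{g}$ and yields that $\tilde{Q}=\bigcup_{j=1}^{r-1}\bigcup_{m\ge 0}\tilde{g}^{-m}(\{y_j\})$ is finite. Tracing through the proof of Corollary \ref{Rem1} and Proposition \ref{mainar} also gives the stronger inclusion $\tilde{Q}\subset I\setminus\bigcap_{k\ge 0} A_k(\varphi_1,\ldots,\varphi_r)$. The finiteness of $\tilde{Q}$ endows $\tilde{g}$ with an invariant quasi-partition $\tilde{\mathscr{P}}$ in the sense of Definition \ref{partition}, and this quasi-partition is invariant under $\tilde{g}$ by Lemma \ref{tmt}.

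For asymptotic periodicity, I would then repeat the final argument of the proof of Theorem \ref{pr1} verbatim, with $\tilde{g}$ in place of $f$ and $\Upsilon$ in place of $\Phi$. Lemma \ref{ipartition}, whose proof only uses the existence of an invariant quasi-partition, gives eventual periodicity of the itineraries of $\tilde{g}$; the subsequent contraction argument on an eventually periodic interval $(c,d)\in\tilde{\mathscr{P}}$ of period $p$ produces a unique attracting periodic point, and it depends precisely on $c,d\notin\bigcap_{k\ge 0}A_k(\varphi_1,\ldots,\varphi_r)$, which is exactly what we established. The main obstacle is bookkeeping: carefully tracking the two IFSs $\Psi$ and $\Upsilon$ together with their associated full sets, and verifying that the pointwise equality $g=f_{\varphi_1,\ldots,\varphi_r,y_1,\ldots,y_{r-1}}$ allows (P1)--(P2) to transfer cleanly. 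Once this is done, every remaining step is a direct transplantation from Sections \ref{IFS} and \ref{epi}.
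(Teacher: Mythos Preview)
Your proposal is correct. For the invariant quasi-partition it is exactly the paper's argument made explicit: the paper simply asserts that ``the claims of Lemma~\ref{replace}, Theorems~\ref{lmim} and~\ref{mr1} and therefore Corollary~\ref{cmr1} hold if we replace in (\ref{mapf}) the partition $[x_{0},x_1),\ldots,[x_{n-1},x_n)$ by any partition $I_1,\ldots,I_n$'' with the same endpoints, and unwinding that meta-statement is precisely your localization to the highly contractive IFS $\Upsilon$ followed by Corollary~\ref{Rem1}. One bookkeeping remark: to obtain $(y_1,\ldots,y_{r-1})\in W_{\Upsilon}^{r-1}$ you are using two nested inclusions, first $W\subset W_{\Psi}$ from the definition of $W$ in the proof of Corollary~\ref{cmr1}, and then $W_{\Psi}\subset W(z_1,\ldots,z_{r-1})=W_{\Upsilon}$ from~(\ref{w}); both are available, so this step is fine.

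For asymptotic periodicity the paper takes a shorter route than yours. Rather than re-running the proof of Theorem~\ref{pr1}, it argues directly from (P1): once the $\tilde g$-orbit of $x$ stops visiting the finite set $\{y_1,\ldots,y_{r-1}\}$, it coincides with a $g$-orbit, and $g$ is already known to be asymptotically periodic by Corollary~\ref{cmr1}; if instead the orbit keeps returning to that finite set, pigeonhole makes it eventually periodic outright. Your path through Theorem~\ref{pr1} is valid and self-contained, but the paper's comparison-to-$g$ argument avoids tracking $\bigcap_{k\ge 0} A_k(\varphi_1,\ldots,\varphi_r)$ and the interval-contraction estimate altogether.
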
 
\begin{proof} Let $x\in I$. If $O_{\tilde{g}}(x)\subset  \{0\}\cup\{y_1,\ldots,y_{r-1}\}$, then $O_{\tilde{g}}(x)$ is finite. Otherwise, there exists $\ell\ge 0$ such that $O_{\tilde{g}}\left({\tilde g}^{\ell}(x)\right)\subset (0,1)\setminus \{y_1,\ldots,y_{r-1}\}$. In this case, by (P2), we have that $O_{\tilde{g}}\left(\tilde{g}^{\ell}(x)\right)=O_g\left(\tilde{g}^{\ell}(x)\right)$, which is finite by \mbox{Corollary \ref{cmr1}}. This proves that $\tilde{g}$ is asymptotically periodic.

It remains to be shown that the set $\tilde{Q}=\cup_{j=1}^{r-1}\cup_{k\ge 0} {\tilde{g}}^{-k}(\{y_j\})$ is finite. By proceeding as in the proof of Corollary \ref{Rem1}, it can be proved that the claims of
Lemma \ref{replace}, \mbox{Theorems \ref{lmim}} and \ref{mr1} and therefore Corollary \ref{cmr1}
 hold if we replace in (\ref{mapf}) the partition $[x_{0},x_1)$, \ldots, $[x_{n-1},x_n)$ by any partition $I_1,\ldots,I_n$  where each interval $I_i$ has endpoints 
$x_{i-1}$ and $x_i$. This means that in Corollary \ref{cmr1} we can replace the map $g$ by the map $\tilde{g}$ and conclude that the set $\tilde{Q}$ is finite. Hence, $\tilde{g}$ has an invariant quasi-partition. 
 \end{proof} 

Corollary \ref{cmr1} and Corollary \ref{sml} will be used later on this section. Now let us come back to the original IFS $\{\phi_1,\ldots,\phi_n\}$.
 
We denote by $\Omega_{n-1}'$  the set  
\begin{equation}\label{ind}
\Omega_{n-1}'=\Omega  \setminus \bigcup_{i= 0}^{n-1} \bigcup_{j= 1}^{n-1}\bigcup_{\ell\ge 1}\bigcup_{h \in \mathscr{C}_\ell} 
\{(x_1,\ldots,x_{n-1})\in\Omega_{n-1} : x_j=h(x_i)\},
\end{equation}
which will be used in the forthcoming results.

\begin{lemma}\label{inde} 
$\Omega_{n-1}'=\Omega_{n-1}$ almost surely.
\end{lemma}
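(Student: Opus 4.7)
The plan is to show that the bad set $\Omega_{n-1}\setminus\Omega_{n-1}'$, given by the large union in (\ref{ind}), is a null set in $\R^{n-1}$. The first step is a counting observation: by the recursion (\ref{ckak}) one has $|\mathscr{C}_\ell|\le n^\ell$, so $\bigcup_{\ell\ge 1}\mathscr{C}_\ell$ is a countable collection of maps. Combined with the finiteness of the indices $i\in\{0,1,\ldots,n-1\}$ and $j\in\{1,\ldots,n-1\}$, the bad set is a countable union of slices of the form
\[
B_{i,j,h}=\{(x_1,\ldots,x_{n-1})\in\Omega_{n-1}:x_j=h(x_i)\},
\]
where the convention $x_0=0$ (already used in (\ref{delta})) is in force. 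It thus suffices to show that each $B_{i,j,h}$ has $(n-1)$-dimensional Lebesgue measure zero.

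Next, I would split the argument into three cases. If $i=0$, then $h(x_0)=h(0)$ is a constant and $B_{0,j,h}$ is contained in the hyperplane $\{x_j=h(0)\}$, which is null in $\R^{n-1}$. If $1\le i\ne j\le n-1$, then $B_{i,j,h}$ lies on the graph of the continuous function $x_i\mapsto h(x_i)$ with the remaining $n-3$ coordinates free; a direct application of Fubini shows that this graph has measure zero in $\R^{n-1}$. Finally, if $i=j\ge 1$, then $B_{j,j,h}\subset\{x_j\in\mathrm{Fix}(h)\}$; since $h$ is a composition of $\ell\ge 1$ $\rho$-Lipschitz maps, the Chain rule for Lipschitz functions shows that $h$ is $\rho^\ell$-Lipschitz with $\rho^\ell<1$, so by the Banach contraction principle $h$ has a unique fixed point in $[0,1]$, and $B_{j,j,h}$ lies in a single hyperplane of $\R^{n-1}$.

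Assembling the three cases, each slice $B_{i,j,h}$ has $(n-1)$-dimensional Lebesgue measure zero, and their countable union is therefore also null. This yields $\mu(\Omega_{n-1}\setminus\Omega_{n-1}')=0$, i.e.~$\Omega_{n-1}'=\Omega_{n-1}$ almost surely. I do not anticipate any substantive obstacle: the argument is essentially measure-theoretic bookkeeping, and the only point worth stressing is that the countability of $\bigcup_{\ell\ge 1}\mathscr{C}_\ell$, together with the strict contraction property inherited from $\rho<1$ (needed to handle the diagonal case $i=j$), is what makes everything fit inside countable additivity.
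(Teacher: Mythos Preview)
Your proof is correct and follows the same approach as the paper's: the bad set is a countable union of slices $B_{i,j,h}$, each of which is null in $\R^{n-1}$. Your case analysis is in fact more careful than the paper's brief argument, which simply asserts that ``being the graph of a function, each such set is a null set'' without separately handling the diagonal case $i=j$; your invocation of the Banach fixed point theorem there is a welcome clarification.
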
 
\begin{proof} 
There are only denumerably many sets of the form 
 $\{(x_1,\ldots,x_{n-1})\in\Omega_{n-1}: x_j=h(x_i)\}$, where 
$0\le i\le n-1$, $1\le j\le n-1$ and $h\in \bigcup_{\ell\ge 1} \mathscr{C}_{\ell}$.
Being the graph of a function, each such set is a null set.
Therefore,  $\Omega_{n-1}'$ equals $\Omega_{n-1}$ up to a null set. 
\end{proof}

\begin{lemma}\label{sta} 
Let $(x_1,\ldots,x_{n-1})\in\Omega_{n-1}'$  and $f=f_{\phi_1,\ldots,\phi_n,x_1,\ldots,x_{n-1}}$. Let   $\gamma$ be a periodic orbit of $f$, then there exists a neighborhood $U\subset I$ of $\gamma$ such that $f(U)\subset U$ and $\gamma=\cap_{\ell\ge 0} f^{\ell}(U)$. Moreover, $\omega_f(x)=\gamma$ for every $x\in U$.
\end{lemma}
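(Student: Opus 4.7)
The plan is to first show that $\gamma$ avoids the discontinuity set $\{0,x_1,\ldots,x_{n-1}\}$, so that a sufficiently thin open $\epsilon$-neighborhood $U$ of $\gamma$ sits inside the continuity pieces of $f$, where $f$ is a $\rho$-Lipschitz contraction on each piece, and then to close the argument by a standard Banach-type contraction estimate on $f^k$.

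Write $\gamma=\{p_0,\ldots,p_{k-1}\}$ with $f(p_j)=p_{(j+1)\bmod k}$, and let $i_j\in\{1,\ldots,n\}$ be such that $p_j\in I_{i_j}$. Then the composition
\[
h_j:=\phi_{i_{j-1}}\circ\phi_{i_{j-2}}\circ\cdots\circ\phi_{i_{j+1}}\circ\phi_{i_j}\in\mathscr{C}_k
\]
(indices mod $k$) satisfies $h_j(p_j)=p_j$. I would first rule out the case $p_j\in\{0,x_1,\ldots,x_{n-1}\}$. If $p_j=x_i$ for some $i\in\{1,\ldots,n-1\}$, then $x_i=h_j(x_i)$; since $i\in\{0,\ldots,n-1\}\cap\{1,\ldots,n-1\}$ and $h_j\in\mathscr{C}_k$ with $k\ge1$, this contradicts the definition of $\Omega_{n-1}'$ in $(\ref{ind})$. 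If $p_j=0$, then $h_j(0)=0$; but $\phi_m(\overline{I})\subset\mathring{I}$ for every $m$ forces $h_j(0)\in\mathring{I}=(0,1)$, also a contradiction. Hence $\gamma\subset(0,1)\setminus\{x_1,\ldots,x_{n-1}\}$.

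Using the finiteness of $\gamma$ and its disjointness from the discontinuities, I would then choose $\epsilon>0$ small enough that each open interval $B_\epsilon(p_j):=(p_j-\epsilon,p_j+\epsilon)$ lies inside the interior of $I_{i_j}$. On $B_\epsilon(p_j)$, $f$ coincides with the $\rho$-Lipschitz contraction $\phi_{i_j}$, so
\[
f(B_\epsilon(p_j))\subset B_{\rho\epsilon}(p_{(j+1)\bmod k})\subset B_\epsilon(p_{(j+1)\bmod k}).
\]
Setting $U:=\bigcup_{j=0}^{k-1}B_\epsilon(p_j)$ yields an open neighborhood of $\gamma$ in $I$ with $f(U)\subset U$, and by induction $f^{mk}(B_\epsilon(p_j))\subset B_{\rho^{mk}\epsilon}(p_j)$ for every $m\ge 0$.

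Since $f(U)\subset U$ the sequence $\{f^\ell(U)\}_{\ell\ge 0}$ is decreasing, so
\[
\bigcap_{\ell\ge 0}f^\ell(U)=\bigcap_{m\ge 0}f^{mk}(U)\subset\bigcap_{m\ge 0}\bigcup_{j=0}^{k-1}B_{\rho^{mk}\epsilon}(p_j)=\gamma,
\]
and the reverse inclusion follows from $f^\ell(\gamma)=\gamma\subset U$. Finally, for any $x\in B_\epsilon(p_j)\subset U$, the inclusion $f^{mk+s}(x)\in B_{\rho^{mk}\epsilon}(p_{(j+s)\bmod k})$ forces $f^{mk+s}(x)\to p_{(j+s)\bmod k}$ as $m\to\infty$, yielding $\omega_f(x)=\gamma$. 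The main obstacle is the first step: once $\gamma$ is shown to avoid the breakpoints, the rest is a routine trapping-neighborhood argument for a contractive cycle.
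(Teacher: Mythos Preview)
Your proof is correct and follows essentially the same approach as the paper: show $\gamma\cap\{x_0,\ldots,x_{n-1}\}=\emptyset$ using the definition of $\Omega_{n-1}'$ (for $x_1,\ldots,x_{n-1}$) and the fact that $f(I)\subset(0,1)$ (for $x_0=0$), then take $U$ to be a union of small $\epsilon$-balls around the periodic points and use the Lipschitz contraction on each piece. Your version is in fact more detailed than the paper's, which simply asserts $f(U)\subset U$ and $\gamma=\cap_{\ell\ge 0}f^\ell(U)$ without spelling out the $\rho^{mk}\epsilon$ shrinking or the $\omega_f(x)=\gamma$ conclusion.
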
 
\begin{proof} 
Let $\gamma$ be a periodic orbit of $f$. 
As $(x_1,\ldots,x_{n-1})\in\Omega_{n-1}'$ and
$f(I)\subset (0,1)$, we have that
$\gamma\cap \{x_0,\ldots,x_{n-1}\}=\emptyset$. Let $\epsilon=\frac 12 \min\{\vert x-x_i\vert: x\in \gamma, 0\le i\le n\}$ and set $U:=\cup_{x\in \gamma}\left( x-\epsilon,x+\epsilon\right)$, in particular $U\subset I\setminus \{x_0,\ldots,x_{n-1}\}$. This together with the fact that $f\vert_{[x_{i-1},x_i)}$ is a Lipschitz contraction implies that $f(U)\subset U$, thus $\gamma=\cap_{\ell\ge 0} f^{\ell}(U)$.
\end{proof}

\begin{lemma}\label{oneofu} Let $(x_1,\ldots,x_{n-1})\in\Omega_{n-1}'$  and $f=f_{\phi_1,\ldots,\phi_n,x_1,\ldots,x_{n-1}}$, then $f^k$ is left-continuous or right-continuous at each point of $I$.
\end{lemma}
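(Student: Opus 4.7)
I would argue by induction on $k$. The base case $k=1$ is direct from the piecewise definition \eqref{mapf}: on each open interval $(x_{i-1},x_i)$, $f$ coincides with the continuous map $\phi_i$, and at each left endpoint $x_i$ of a piece (including $x_0=0$) we have $f(x_i)=\phi_{i+1}(x_i)=\lim_{x\to x_i^+}\phi_{i+1}(x)$ by continuity of $\phi_{i+1}$, so $f$ is right-continuous at every point of $I$.

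For the inductive step, assume $f^k$ is left- or right-continuous at every point of $I$, fix $y\in I$, and decompose $f^{k+1}(y)=f(f^k(y))$. If $f^k(y)\notin\{x_1,\ldots,x_{n-1}\}$, then $f$ is continuous at $f^k(y)$, and composing with the one-sided continuity of $f^k$ at $y$ (from the inductive hypothesis) yields the same-sided continuity of $f^{k+1}$ at $y$. If instead $f^k(y)=x_i$ for some $i$, the condition $(x_1,\ldots,x_{n-1})\in\Omega_{n-1}'$ in \eqref{ind} forces $y\notin\{x_0,\ldots,x_{n-1}\}$: should $y=x_m$, the itinerary of $x_m$ would start deterministically in the piece $[x_m,x_{m+1})$, giving $f^k(x_m)=h(x_m)$ for a specific $h\in\mathscr{C}_k$, which by definition of $\Omega_{n-1}'$ cannot equal $x_i$. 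Consequently $y$ lies in the interior $(x_{m-1},x_m)$ of some piece, and $f$ agrees with the continuous map $\phi_m$ on a full neighborhood of $y$. Using the alternative factorization $f^{k+1}=f^k\circ f$, the inductive hypothesis at $f(y)=\phi_m(y)$ supplies a side $s\in\{-,+\}$ of continuity of $f^k$ there, and one transports this to a one-sided neighborhood of $y$ through the continuity of $\phi_m$.

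\textbf{Main obstacle.} The delicate point is the final matching step: the side of continuity of $f^k$ at $\phi_m(y)$ must be compatible with a one-sided neighborhood of $y$ along which $\phi_m(x)$ approaches $\phi_m(y)$ from side $s$. When $\phi_m$ is monotone near $y$, as in the $\lambda$-affine setting of Theorem \ref{abcd}, the two sides of $y$ have opposite signs for $\phi_m(x)-\phi_m(y)$ and a compatible side exists automatically. For general Lipschitz $\phi_m$ a finer local sign analysis of $\phi_m(x)-\phi_m(y)$ near $y$ is required; the hypothesis $\Omega_{n-1}'$ assists by preventing any forward iterate of an $x_m$ from landing back in $\{x_1,\ldots,x_{n-1}\}$, which keeps the discontinuity structure of $f^k$ sufficiently simple that the one-sided continuity propagates cleanly through the composition.
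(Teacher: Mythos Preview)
The paper's argument is not inductive: it fixes $y$, considers the orbit segment $S_y=\{y,f(y),\ldots,f^{k-1}(y)\}$, and observes that the definition of $\Omega_{n-1}'$ forces $S_y\cap\{x_0,\ldots,x_{n-1}\}$ to contain at most one point. When the intersection is empty $f$ is continuous along the whole segment and $f^k$ is continuous at $y$; when there is a single point $y'$, the paper records that $f$ is continuous on $S_y\setminus\{y'\}$ and one-sidedly continuous at $y'$, and concludes ``Accordingly, $f^k$ is either left-continuous or right-continuous at $y$.'' Your induction unwinds to the same core observation, just organised differently.

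The obstacle you flag in your second paragraph is genuine, and the paper's proof faces it too: the word ``accordingly'' is exactly where the side-matching problem sits. When each $\phi_i$ is monotone (in particular in the $\lambda$-affine setting of Theorem~\ref{abcd}) the argument goes through, since the continuous factor $f^j$ carries one side of $y$ to one side of $f^j(y)=x_i$. For general Lipschitz $\phi_i$ the conclusion can fail: take $n=2$, let $\phi_1$ be a tent-shaped contraction with apex $\phi_1(y_0)=x_1$ at an interior point $y_0\in(0,x_1)$ and $\phi_1(x)<x_1$ for $x\neq y_0$, and let $\phi_2$ satisfy $\phi_2(x_1)\neq\phi_1(x_1)$. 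Then $f(x)<x_1$ on both one-sided neighbourhoods of $y_0$, so $\lim_{x\to y_0^{\pm}}f^2(x)=\phi_1(x_1)\neq\phi_2(x_1)=f^2(y_0)$, and $f^2$ is continuous from neither side at $y_0$. The condition $(x_1)\in\Omega_1'$ only forbids $x_1=h(x_i)$ for $i\in\{0,1\}$ and $h\in\bigcup_{\ell\ge 1}\mathscr{C}_\ell$; it says nothing about $\phi_1^{-1}(\{x_1\})$ containing a point $y_0\notin\{x_0,x_1\}$. Your closing suggestion that $\Omega_{n-1}'$ by itself ``keeps the discontinuity structure sufficiently simple'' to push the argument through is therefore not correct; some form of local monotonicity of the $\phi_i$ (or an additional genericity hypothesis on the preimages) is what is actually doing the work.
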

\begin{proof} Let $y\in I$ and $S_y=\{y,f(y),\ldots,f^{k-1}(y)\}$. The fact that $(x_1,\ldots,x_{n-1})\in\Omega_{n-1}'$  assures that $S_y\cap \{x_0,\ldots,x_{n-1}\}$ is either empty or an one-point set. In the former case,
we have that $f$ is continuous on $S_y$, hence $f^k$ is continuous at $y$. In the latter case, there exists
$y'\in S_y$ such that $f$ is continuous at each point of $S_y\setminus\{ y'\}$ and $f$ is left-continuous or right-continuous at $y'$. Accordingly, $f^k$ is either left-continuous or right-continuous at $y$.
\end{proof}

For the next result, let $W$ be the full set in the statement of Corollary \ref{cmr1}.

\begin{lemma}\label{toconclude1} 
There exists a full  set $W_0\subset W$ such that 
if $(x_1,\ldots,x_{n-1})\in \Omega_{n-1}'\cap W_0^{n-1}$ and $f=f_{\phi_1,\ldots,\phi_n,x_1,\ldots,x_{n-1}}$, then 
$$
f^k\vert_{(y_{j-1},y_j)}=\psi_j\vert_{(y_{j-1},y_j)},\,\,1\le j\le r,
$$
for some $r\ge 2$, $(y_1,\ldots,y_{r-1})\in\Omega_{r-1}\cap W^{r-1}$, and $\psi_1,\ldots, \psi_{r}\in \mathscr{C}_k$. Moreover, $f^k$ is left-continuous or right-continuous at each point of $I$. 
\end{lemma}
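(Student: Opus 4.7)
The plan is to construct $W_0$ so that, for parameters in $\Omega_{n-1}' \cap W_0^{n-1}$, the set of discontinuities of $f^k$ inside $(0,1)$ is finite and lies entirely in $W$. The underlying idea is that on any maximal subinterval of $(0,1)$ over which no iterate $f^j$, $0 \le j \le k-1$, crosses a breakpoint $x_i$, the length-$k$ itinerary is constant, so $f^k$ there coincides with a single element of $\mathscr{C}_k$. The discontinuities of $f^k$ are therefore contained in the finite union of preimages $\bigcup_{j,i} f^{-j}(\{x_i\})$, and the real task is to arrange that these preimages avoid the null obstruction $I \setminus W$.

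For the construction, I would start from the full set $W$ supplied by Corollary \ref{cmr1} and refine it twice. Let $\mathscr{H} = \bigcup_{j=0}^{k-1} \mathscr{C}_j$, a finite collection of Lipschitz maps on $\bar{I}$. By the argument of Lemma \ref{st2}, there is a full set $W_2^{\ast} \subset I$ such that $h^{-1}(\{x\})$ is finite for every $x \in W_2^{\ast}$ and every $h \in \mathscr{H}$. Moreover, since each $h \in \mathscr{H}$ is Lipschitz and $\bar{I} \setminus W$ has measure zero, the set $B := \bigcup_{h \in \mathscr{H}} h(\bar{I} \setminus W)$ is a finite union of null sets, hence null. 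I then set $W_0 := (W \cap W_2^{\ast}) \setminus B$, still a full subset of $I$.

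To finish, fix $(x_1, \ldots, x_{n-1}) \in \Omega_{n-1}' \cap W_0^{n-1}$. Using the inclusion $f^{-j}(\{x_i\}) \subset \bigcup_{h \in \mathscr{C}_j} h^{-1}(\{x_i\})$ (which follows from the piecewise definition of $f$, since the itinerary at a point selects a unique composition) together with $x_i \in W_2^{\ast}$, the set $D := \bigcup_{j=0}^{k-1}\bigcup_{i=1}^{n-1} f^{-j}(\{x_i\})$ is finite. Enumerate $D \cap (0,1) = \{y_1 < \cdots < y_{r-1}\}$ and put $y_0 = 0$, $y_r = 1$; note $r \ge 2$ because $x_1 \in D \cap (0,1)$. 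On each open interval $(y_{j-1}, y_j)$ the length-$k$ itinerary is constant, so $f^k$ there agrees with a fixed $\psi_j \in \mathscr{C}_k$. To verify $y_j \in W$: if $y_j = x_i$ for some $i$ then $y_j \in W_0 \subset W$ immediately; otherwise $y_j \in h^{-1}(\{x_i\})$ for some $h \in \mathscr{H} \setminus \{Id\}$, and $y_j \notin W$ would force $x_i = h(y_j) \in B$, contradicting $x_i \in W_0$. The final one-sided continuity claim is then nothing but Lemma \ref{oneofu}, since $(x_1,\ldots,x_{n-1}) \in \Omega_{n-1}'$.

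The main obstacle I expect is precisely this last verification that every $y_j$ lies in $W$, not merely in $I$. The set $W$ from Corollary \ref{cmr1} is not preserved under preimages by elements of $\mathscr{C}_j$, so one must preemptively excise from $W$ the null set $B$ of Lipschitz images of $\bar{I}\setminus W$; without this refinement the partition points $y_j$ would only be known to lie in $I$, which would not suffice later for reapplying Corollary \ref{cmr1} to the iterate $f^k$ in the proof of Theorem \ref{toconclude2}.
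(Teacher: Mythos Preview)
Your proof is correct and follows essentially the same route as the paper: both constructions remove from $W$ the null set of Lipschitz images of $I\setminus W$ under $\bigcup_{j=0}^{k-1}\mathscr{C}_j$ (the paper calls this set $M$, you call it $B$), intersect with the full set from Lemma~\ref{st2} to guarantee finiteness of the preimage set, list those preimages as the $y_j$'s, and then observe that the length-$k$ itinerary is constant on each $(y_{j-1},y_j)$; the final clause is exactly Lemma~\ref{oneofu}. Your explicit case analysis for $y_j\in W$ is just the contrapositive of the paper's direct appeal to $x_i\notin M$.
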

\begin{proof} 
Let $M=\Big\{x\in W\,\big | \cup_{\ell=0}^{k-1}\cup_{h\in\mathscr{C_{\ell}}}h^{-1}(\{x\})\not\subset W \Big\}$. Notice that $M\subset \cup_{\ell=0}^{k-1}\cup_{h\in\mathscr{C}_{\ell}} h(I\setminus W)$, where $I\setminus W$ is a null set, therefore $M$ is a null set. 
By Lemma \ref{st2}, there exists a full  set $W_0\subset W\setminus M$ such that 
 $\bigcup_{\ell=0}^{k-1}f^{-\ell}(\{x\})\subset \bigcup_{\ell=0}^{k-1}\bigcup_{h\in\mathscr{C}_{\ell}}h^{-1}(\{x\})$ is a finite subset of $W$ for every $x\in W_0$. 
Now let $(x_1,\ldots,x_{n-1})\in \Omega_{n-1}'\cap W_0^{n-1}$, thus $\bigcup_{\ell=0}^{k-1}f^{-\ell}(\{x_1,\ldots,x_{n-1}\})$ is a finite subset of $W\setminus\{0\}$ and we may list its elements in ascending order $0<y_1<\cdots <y_{r-1}<1$. In this way, $(y_1,\ldots,y_{r-1})\in \Omega_{r-1}\cap W^{r-1}$.
 Let us analyze how $f^k$ acts on the the intervals
 $E_1=(y_0,y_1)$, \ldots, $E_{r}=(y_{r-1},y_r)$. Fix $1\le j\le r$.
 Since $\{y_1,\ldots,y_{r-1}\}=\bigcup_{\ell=0}^{k-1}f^{-\ell}(\{x_1,\ldots,x_{n-1}\})$, we have that
 for each $0\le \ell\le k-1$,  there exists a unique $1\le i_{\ell}\le n$ such that
 $f^{\ell}(E_j)\subset (x_{i_{\ell}-1},x_{i_{\ell}})$. This together with the fact that $f\vert_{[x_{i_{\ell}-1},x_{i_{\ell}})}=\phi_{i_{\ell}}$ yields $f^k\vert_{E_j}=\psi_j\vert_{E_j}$, where
 $\psi_j=\phi_{{i_{k}}}\circ\cdots\circ \phi_{i_{1}}\in\mathscr{C}_k$. The claim that $f^k$ is left-continuous or right-continuous at each point of $I$ follows from \mbox{Lemma \ref{oneofu}}.
 \end{proof}

\begin{proof}[Proof of Theorem \ref{toconclude2}] Let $(x_1,\ldots,x_{n-1})\in \Omega_{n-1}'\cap W^{n-1}_0$ and $f=f_{\phi_1,\ldots,\phi_{n},x_1,\ldots,x_{n-1}}$.
By Lemma \ref{toconclude1}, there exist $r\ge 2$, $(y_1,\ldots,y_{r-1})\in\Omega_{r-1}\cap W^{r-1}$,
and $\psi_1,\ldots,\psi_r\in\mathscr{C}_k$ such that 
\begin{equation}\label{fkk}
f^k\vert_{(y_{j-1},y_j)}=\psi_j\vert_{(y_{j-1},y_j)},\,\,1\le j\le r.
\end{equation}
Let $g=f_{\psi_1,\ldots,\psi_n,y_1,\ldots,y_{r-1}}$ and $\tilde{g}=f^k$. We claim that $\tilde{g}$ satisfies (P1) and (P2) in Corollary \ref{sml}. The property (P1) follows automatically from the equation (\ref{fkk}). The property (P2) follows from (P1) together with the fact that $f^k$ is left-continuous or right-continuous at each point of $I$, as assured by Lemma \ref{toconclude1}. By Corollary \ref{sml}, the map $\tilde{g}=f^k$ has an invariant quasi-partition, that is to say, the set
$$\tilde{Q}=\cup_{j=1}^{r-1}\cup_{s\ge 0} \tilde{g}^{-s}(\{y_j\})=\cup_{j=1}^{r-1}\cup_{s\ge 0} f^{-sk}(\{y_j\})$$
is finite,
implying that the set $Q'=\cup_{j=1}^{r-1}\cup_{s\ge 0} f^{-s}(\{y_j\})$ is finite. By the proof of \mbox{Lemma \ref{toconclude1}}, we have that $\{x_1,\ldots,x_{n-1}\}\subset \{y_1,\ldots,y_{r-1}\}$.
In this way, 
$$Q:=\cup_{i=1}^{n-1}\cup_{s\ge 0} f^{-s}(\{x_i\})\subset \cup_{j=1}^{r-1}\cup_{t\ge 0} f^{-t}(\{y_j\}),$$
and $Q$ is therefore finite. This proves that $f$ has an invariant quasi-partition.

By Corollary \ref{sml}, the map $\tilde{g}=f^k$ is asymptotically periodic. We claim that $f$ is also asymptotically periodic. Let $x\in I$, then there exists a periodic orbit $\gamma_k$ of $f^k$ such that
$\omega_{f^k}(x)=\gamma_k$. Let $p\in \gamma_k$. Notice that $p$ is a periodic point of $f$, thus there exists a periodic orbit $\gamma$ of $f$ that contains $p$ and $\gamma_k$. Let $U$ be a neighborhood of $\gamma$ given by Lemma \ref{sta}. Since $\omega_{f^k}(x)=\gamma_k\subset \gamma$,  there exists
an integer $\eta\ge 1$ such that $f^{\eta k}(x)\in U$. By Lemma \ref{sta}, $\omega_f(x)=\omega_f\left( f^{\eta k}(x)\right)=\gamma$ which proves the claim. Hence, $f$ is asymptotically periodic.
\end{proof}

 \section{An upper bound for the number of periodic orbits}\label{upperb}

  Throughout this section, let $\phi_1,\ldots,\phi_{n}:[0,1]\to (0,1)$ be Lipschitz contractions,  $W_0$ be the full set in the statement of Theorem \ref{toconclude2} and $\Omega_{n-1}'$ be the set defined in (\ref{ind}). Let
  $(x_1,\ldots,x_{n-1})\in\Omega_{n-1}'\cap W_0^{n-1}$ and $f=f_{\phi_1,\ldots,\phi_{n},x_1,\ldots,x_{n-1}}$. By Theorem \ref{toconclude2}, $f$ has an invariant  quasi-partition $\mathscr{P}=\cup_{\ell=1}^m J_{\ell}$ with endpoints in $\{0,1\}\cup\cup_{i=1}^{n-1} Q_i$, where each set $Q_i=\cup_{k\ge 0}f^{-k}(\{x_i\})$ is finite. 
  
  Here we prove the following result.

\begin{theorem}\label{IP1}  The $n$-interval PC $f$ has at most $n$ periodic orbits. 
\end{theorem}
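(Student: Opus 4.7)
The plan is to convert the question into counting cycles of the successor map $\tau:\{1,\ldots,m\}\to\{1,\ldots,m\}$ on the invariant quasi-partition $\mathscr{P}=\{J_\ell\}_{\ell=1}^{m}$, where $f(J_\ell)\subset J_{\tau(\ell)}$ by Lemma \ref{tmt}, and then bound that count by $n$. Throughout, $\eta:\{1,\ldots,m\}\to\{1,\ldots,n\}$ denotes the map with $J_\ell\subset I_{\eta(\ell)}$.

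First, I would show that every periodic orbit $\gamma$ of $f$ corresponds to a unique cycle $C(\gamma)\subset\{1,\ldots,m\}$ of $\tau$ with $|C(\gamma)|=|\gamma|$, via the containment $\gamma\subset\bigcup_\ell J_\ell$ and the fact that $|\gamma\cap J_\ell|\le 1$ for every $\ell$. To rule out $\{0\}\cup Q$ from $\gamma$, I would argue in three parts: (a) the point $0$ is not periodic because $f(I)\subset\bigcup_i\phi_i([0,1])\subset(0,1)$; (b) no discontinuity $x_i$ is periodic, since periodicity would yield $x_i=f^p(x_i)=h(x_i)$ for some $h\in\mathscr{C}_p$ with $p\ge 1$, directly contradicting $(x_1,\ldots,x_{n-1})\in\Omega_{n-1}'$ in~(\ref{ind}); (c) any other $q\in Q$ has an orbit that eventually reaches some $x_i$, so periodicity of $q$ would force $x_i$ to be periodic, contradicting (b). The uniqueness $|\gamma\cap J_\ell|\le 1$ follows because $\phi_{\eta(\ell)}$ is a contraction, so two periodic points in $J_\ell$ would have their distance strictly contracted along the orbit yet preserved by periodicity.

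The remaining task is to bound the number of cycles of $\tau$ by $n$, which I plan to do by induction on $n$. The base case $n=1$ is immediate: $f=\phi_1$ is a contraction with a unique fixed point in $[0,1)$. For the inductive step, I would examine the rightmost interval $I_n=[x_{n-1},1)$ and split into two cases. If $f(I_n)\subset I_n$, the fixed point of $\phi_n$ in $I_n$ accounts for exactly one periodic orbit, and all other periodic orbits are contained in $[0,x_{n-1})$; the induced $\tau$-dynamics on cycles contained in $\eta^{-1}(\{1,\ldots,n-1\})$ corresponds to an $(n-1)$-piece PC setup to which the inductive hypothesis applies, giving at most $n-1$ further orbits. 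If $f(I_n)\not\subset I_n$, I would construct an auxiliary $(n-1)$-piece PC $\tilde f$ by modifying $\phi_n|_{I_n}$---for instance, merging $I_n$ with $I_{n-1}$ via an appropriate composition---so that periodic orbits of $\tilde f$ are in bijection with those of $f$, and then apply induction.

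The principal obstacle is the case $f(I_n)\not\subset I_n$: one cannot simply excise a forward-invariant piece, and constructing the reduced PC $\tilde f$ must be done so as to preserve the periodic orbit structure, strictly reduce the number of pieces, and maintain the genericity hypotheses $(x_1,\ldots,x_{n-1})\in\Omega_{n-1}'\cap W_0^{n-1}$ required for the inductive step. A successful treatment of this case, combined with the bijection from the first step, yields the claimed bound.
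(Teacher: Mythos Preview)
Your first reduction---showing that periodic orbits are disjoint from $\{0\}\cup Q$ and hence correspond injectively to cycles of $\tau$---is correct and matches the spirit of the paper's strategy. The difficulty lies entirely in bounding the number of $\tau$-cycles by $n$, and here the inductive scheme you propose does not go through.

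The case $f(I_n)\subset I_n$ is already problematic, not only the second one. You assert that the $\tau$-dynamics on cycles contained in $\eta^{-1}(\{1,\ldots,n-1\})$ ``corresponds to an $(n-1)$-piece PC setup,'' but no such correspondence is available: the restriction $f|_{[0,x_{n-1})}$ is in general not a self-map of $[0,x_{n-1})$, since points of $[0,x_{n-1})$ may be sent into $I_n$ and never return. Any redefinition of $f$ on those points to force a self-map can create spurious periodic orbits, and there is no reason the resulting $(n-1)$-piece map should satisfy the genericity hypotheses $\Omega_{n-2}'\cap W_0^{n-2}$ needed to invoke the inductive hypothesis. The second case, as you acknowledge, is entirely open; ``merging $I_n$ with $I_{n-1}$ via an appropriate composition'' is not a construction, and there is no evident way to collapse two pieces while preserving the periodic-orbit count.

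The paper avoids induction altogether via a direct combinatorial argument on $\mathscr{P}$. For each discontinuity $x_i$, let $F_i,G_i\in\mathscr{P}$ be the two intervals having $x_i$ as an endpoint, and set $\mathscr{P}'=\{F_1,G_1,\ldots,F_{n-1},G_{n-1}\}$. One first shows (Lemma~\ref{P0}) that every $J\in\mathscr{P}$ eventually lands in some $F_i\cup G_i$, so every $\tau$-cycle meets $\mathscr{P}'$. One then declares $C_1\equiv C_2$ on $\mathscr{P}'$ whenever some iterates $f^\ell(C_1)$ and $f^k(C_2)$ lie in a common $C\in\mathscr{P}'$; this is an equivalence relation, and distinct periodic orbits determine distinct classes. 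The crucial step (Lemmas~\ref{P1}--\ref{P2}, Corollary~\ref{P3}) is to produce a permutation $i_1,\ldots,i_{n-1}$ of the discontinuities such that, for every $k\ge 2$, one of $F_{i_k},G_{i_k}$ is $\equiv$-equivalent to one of $F_{i_j},G_{i_j}$ with $j<k$. This linking is obtained by locating, for each $k$, an interval of $\mathscr{P}$ with one endpoint in $Q_{i_k}$ and the other in $Q_{i_1}\cup\cdots\cup Q_{i_{k-1}}$. It forces the number of equivalence classes to grow by at most one at each step, yielding at most $n$ classes and hence at most $n$ periodic orbits.
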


We would like to distinguish some intervals in  $ \mathscr{P}$, first those having $x_0=0$ and $x_n=1$ as endpoints. We denote them by $F_0$  and $G_n$, where  $x_0\in \overline{F_0}$ and $x_n\in \overline{G_n}$. 
For every $1\leq i\leq n-1$, let $F_i=(a,x_i)$ and $G_i=(x_i,b)$ be the two intervals in $\mathscr{P}$ which have $x_i$ as an endpoint. We may have $G_i=F_{i+1}$ for some $1\leq i \leq n-2$. Among the intervals $F_1, G_1,\ldots,F_{n-1},G_{n-1}$, there are at least $n$  and at most $2(n-1)$ pairwise distinct intervals. 
We will prove that among them there are $1\leq r \leq n$  pairwise distinct intervals, say $C_1,\ldots,C_r$, which satisfy the following: for every $J  \in \mathscr{P}$, there exist $k\ge 0$ and $1\le i \le r$ such that  
$$
f^k(J) \subset  C_i.
$$
This implies that the asymptotical behavior of any interval $J  \in \mathscr{P}$ coincides  with the asymptotical behavior of an interval $C_i$.

Let $J,J_1,J_2\in  \mathscr{P}$ and $k \ge 0$. We remark that $ f^{k}(J)\subset J_1\cup J_2$ if, and only if, $ f^{k}(J)\subset J_1$ or $ f^{k}(J)\subset J_2$.

\begin{lemma}\label{P1}
 Let $(a,b)\in \mathscr{P}$ with $a\in Q_i$ and $b\in Q_j$, where $1\leq i, j \leq n-1$ and $i\neq j$. Then there exists $\ell\ge 0$ such that $($at least$)$ one of the following statements holds
 \begin{itemize}
 \item [(i)] $f^{\ell}(F_i)\subset F_j\cup G_j$ or $f^{\ell}(G_i)\subset F_j\cup G_j$;
 \item [(ii)] $f^{\ell}(F_j)\subset F_i\cup G_i$ or $f^{\ell}(G_j)\subset F_i\cup G_i$.
 \end{itemize} 
\end{lemma}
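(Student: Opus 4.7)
The plan is to iterate the interval $(a,b)$ through the invariant quasi-partition $\mathscr{P}$, locate two times $k_a$ and $k_b$ at which the iterate acquires $x_i$ and respectively $x_j$ as an endpoint, and then translate these (via Lemma~\ref{tmt}) into the desired inclusions for $F_i$, $G_i$, $F_j$, $G_j$.

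Applying Lemma~\ref{tmt} iteratively, I would set $J_0 = (a,b)$ and $J_{t+1}$ to be the unique element of $\mathscr{P}$ containing $f(J_t)$. Using $(x_1,\ldots,x_{n-1}) \in \Omega_{n-1}'$, I would observe that either $a = x_i$ (in which case $J_0 = G_i$ and one sets $k_a = 0$), or $a \notin \{x_1,\ldots,x_{n-1}\}$ and the forward orbit $a, f(a), \ldots, f^{k_a - 1}(a)$ avoids the partition points entirely: indeed any occurrence $f^t(a) = x_l$ with $0 \le t < k_a$ would yield $x_i = h(x_l)$ for some $h \in \mathscr{C}_{k_a-t}$ with $k_a - t \ge 1$, contradicting the definition (\ref{ind}) of $\Omega_{n-1}'$. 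Hence $f^{k_a}$ is continuous on a sufficiently small right-neighborhood $(a, a+\epsilon) \subset (a,b)$, so $f^{k_a}\bigl((a, a+\epsilon)\bigr)$ is a connected subset of $J_{k_a}$ accumulating at $x_i = f^{k_a}(a)$; since $x_i \in Q$ and $J_{k_a} \cap Q = \emptyset$, this forces $x_i \in \partial J_{k_a}$, and thus $J_{k_a} \in \{F_i, G_i\}$. The symmetric argument applied to the left-neighborhood $(b - \epsilon, b)$ produces $k_b \ge 0$ with $J_{k_b} \in \{F_j, G_j\}$. Without loss of generality $k_a \le k_b$; iterating Lemma~\ref{tmt} from $J_{k_a}$ then gives $f^{k_b - k_a}(J_{k_a}) \subset J_{k_b} \subset F_j \cup G_j$, which is statement~(i) with $\ell = k_b - k_a$. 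The case $k_b < k_a$ yields statement~(ii) by the same reasoning.

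The main obstacle lies in the second step above: one must ensure that the right-continuous orbit of $a$ (and the analogous orbit of $b$) does not hit any of the discontinuity points $\{x_1,\ldots,x_{n-1}\}$ strictly before time $k_a$ (respectively $k_b$), so that $f^{k_a}$ is continuous on a right-neighborhood of $a$. The exclusion set $\Omega_{n-1}'$, together with the full set $W_0$, is introduced precisely to rule out such accidental coincidences; without them the right-continuous orbit of the endpoint and the interval iteration could diverge at a discontinuity, and it would not follow that $x_i$ is a boundary point of $J_{k_a}$.
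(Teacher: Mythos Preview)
Your proposal is correct and follows essentially the same route as the paper's proof. The paper defines $\ell_i,\ell_j$ exactly as your $k_a,k_b$, uses $\Omega_{n-1}'$ to guarantee that $f^k(a)\notin\{x_1,\ldots,x_{n-1}\}$ for $k\neq \ell_i$ (and similarly for $b$), and then asserts directly that $f^{\ell_i}(J)\subset F_i\cup G_i$ and $f^{\ell_j}(J)\subset F_j\cup G_j$; you have simply spelled out the continuity argument behind that assertion, which the paper leaves implicit.
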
 
\begin{proof} The hypotheses that $a\in Q_i$, $b\in Q_j$ and $(x_1,\ldots,x_{n-1}) \in\Omega_{n-1}'$ \big(see (\ref{ind})\big) 
  imply that there exist unique integers $\ell_i,\ell_j\ge 0$ such
 that $f^{\ell_i}(a)=x_i$ and $f^{\ell_j}(b)=x_j$. Moreover, $f^k(a)\not\in \{x_1,\ldots,x_{n-1}\}$ for every $k\neq\ell_i$, and $f^m(b)\not\in \{x_1,\ldots,x_{n-1}\}$ for every $m\neq \ell_j$. Let $J=(a,b)$, then
 $f^{\ell_i}(J)\subset F_i\cup G_i$ and $f^{\ell_j}(J)\subset F_j\cup G_j$. Now it is clear that the claim (i) happens if $\ell_i\le \ell_j$ (then we set $\ell=\ell_j-\ell_i$) and the claim (ii) occurs if  $\ell_i\ge \ell_j$ (then we set $\ell=\ell_i-\ell_j$).
 \end{proof}
 
 \begin{lemma}\label{P0}
Let $J\in  \mathscr{P}$, then there exist $1\leq i\leq n-1$ and  $k \ge 0$ such that $ f^{k}(J)\subset F_i\cup   G_i$.
\end{lemma}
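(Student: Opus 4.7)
The plan is to analyze the endpoints of $J$ and exploit the definition of $Q$. Write $J = (c, d)$; by the definition of $\mathscr{P}$ as the collection of connected components of $(0,1) \setminus Q$, both $c$ and $d$ lie in $\{0, 1\} \cup Q$. Since $x_1 \in Q \cap (0,1)$ is nonempty, we cannot have $J = (0,1)$, so at least one of $c, d$ belongs to $Q$. By symmetry I will assume $c \in Q$ (the case $d \in Q$ is analogous, using one-sided limits at $d$). Let $m \ge 0$ be the smallest integer such that $f^m(c) = x_i$ for some $i \in \{1, \ldots, n-1\}$.

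If $m = 0$, then $c = x_i$, so $J = G_i \subset F_i \cup G_i$ and the statement holds with $k = 0$. Suppose henceforth $m \ge 1$. The heart of the argument is to show that
\[
\lim_{x \to c^+} f^m(x) = x_i.
\]
This is what I expect to be the main subtle point, because $f^m$ is in general discontinuous, so the value $f^m(c) = x_i$ does not automatically match the one-sided limit. The key observation is that the minimality of $m$ forces $c, f(c), \ldots, f^{m-1}(c) \notin \{x_1, \ldots, x_{n-1}\}$, and since the only discontinuities of $f$ occur at the points $x_1, \ldots, x_{n-1}$, the map $f$ is continuous at each iterate of $c$ up to step $m-1$. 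A straightforward induction on $k$ then shows that $f^k$ is right-continuous at $c$ for every $0 \le k \le m$, and the displayed identity follows.

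Once the limit is established, I apply Lemma \ref{tmt} iteratively to obtain $J' \in \mathscr{P}$ with $f^m(J) \subset J'$. Because $x_i$ is the right-limit at $c$ of $f^m|_{(c,d)}$, we have $x_i \in \overline{f^m(J)} \subset \overline{J'}$; but $x_i \in Q$ whereas $J' \subset (0,1) \setminus Q$, so $x_i \notin J'$. Therefore $x_i$ is an endpoint of $J'$, forcing $J' = F_i$ or $J' = G_i$. We conclude that $f^m(J) \subset F_i \cup G_i$, which is the desired statement with $k = m$.
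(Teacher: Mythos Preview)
Your proof is correct and follows essentially the same approach as the paper. The paper's own proof of Lemma~\ref{P0} simply points to the argument in Lemma~\ref{P1}, which is precisely what you have written out: pick an endpoint of $J$ lying in $Q$, use that the iterates up to the first hitting time of $\{x_1,\ldots,x_{n-1}\}$ avoid the discontinuity set (so that $f^m$ is continuous at that endpoint), and conclude that $x_i$ is an endpoint of the interval $J'\in\mathscr{P}$ containing $f^m(J)$. Your version is in fact more explicit about the continuity step than the paper's sketch.
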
 
\begin{proof} It follows by the arguments used in the proof of Lemma \ref{P1}.
\end{proof}

\begin{lemma}\label{P2}
There exists a permutation $i_1,\ldots,i_{n-1}$ of $1,\ldots,n-1$ and intervals
$(a_{k-1},b_k)\in \mathscr{P}$ with
$a_{k-1}\in Q_{i_1}\cup \ldots \cup Q_{i_{k-1}}$ and $b_k\in Q_{i_k}$ for every $2\le k\le n-1$.
\end{lemma}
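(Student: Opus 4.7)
The plan is to build the permutation $i_1, \ldots, i_{n-1}$ and the intervals $(a_{k-1}, b_k)$ inductively by a leftmost-point procedure, exploiting that $\mathscr{P}$ is the set of gaps between consecutive points of a finite ordered set on $[0,1]$.

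First I would record the structural facts. The sets $Q_1, \ldots, Q_{n-1}$ are pairwise disjoint: if $y \in Q_i \cap Q_j$ with $i \ne j$, then $f^k(y) = x_i$ and $f^m(y) = x_j$ for some $k, m \ge 0$, and cancelling common iterates gives either $x_i = x_j$ (if $k=m$) or $x_j = f^{m-k}(x_i) = h(x_i)$ for a specific $h \in \mathscr{C}_{m-k}$ with $m-k \ge 1$, read off from the itinerary of $x_i$; both options contradict $(x_1,\ldots,x_{n-1}) \in \Omega_{n-1}'$. Each $Q_i$ is nonempty because $x_i \in Q_i$, and $Q = \bigcup_{i=1}^{n-1} Q_i$ is finite because $f$ has an invariant quasi-partition. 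The elements of $\mathscr{P}$ are precisely the open intervals between consecutive points in the ascending enumeration $0 = p_0 < p_1 < \cdots < p_m = 1$ of $\{0,1\} \cup Q$.

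I would then choose $i_1$ as the unique index with $p_1 \in Q_{i_1}$, so that $Q_{i_1}$ contains the leftmost element of $Q$. For the inductive step, assume $i_1,\ldots,i_{k-1}$ have been chosen with $2 \le k \le n-1$. Set $S = Q_{i_1}\cup\cdots\cup Q_{i_{k-1}}$ and $T = \bigcup_{j \notin \{i_1,\ldots,i_{k-1}\}} Q_j$, the latter nonempty because $k-1 < n-1$. Let $b_k$ be the leftmost point of $T$ and let $i_k$ be the unique index with $b_k \in Q_{i_k}$. Then $b_k = p_j$ for some index $j$, and $j \ge 2$ because $p_1 \in Q_{i_1} \subset S$ forces $p_1 \notin T$. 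Setting $a_{k-1} := p_{j-1}$, one has $a_{k-1} \in Q$ (since $j-1 \ge 1$, so $p_{j-1} \ne 0$) and $a_{k-1} \notin T$ by minimality of $b_k$, hence $a_{k-1} \in S = Q_{i_1}\cup\cdots\cup Q_{i_{k-1}}$. Since $p_{j-1}$ and $p_j$ are consecutive in the enumeration of $\{0,1\}\cup Q$, the interval $(a_{k-1}, b_k) = (p_{j-1}, p_j)$ is a connected component of $(0,1)\setminus Q$ and therefore belongs to $\mathscr{P}$.

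The only delicate point is ensuring $b_k$ has a $Q$-predecessor (rather than having $0$ as its immediate predecessor in $\{0,1\}\cup Q$), which is exactly what the choice of $i_1$ as the color of the leftmost element of $Q$ secures. Beyond this, the construction is purely combinatorial, and iterating up to $k = n-1$ exhausts the permutation.
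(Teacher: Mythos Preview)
Your proof is correct and follows essentially the same approach as the paper. The paper defines the permutation all at once by sorting the numbers $y_i=\min Q_i$ in increasing order (so $b_k=y_{i_k}$) and then takes $a_{k-1}=\max\big((0,b_k)\cap Q\big)$; your inductive ``leftmost point of the remaining colours'' rule produces exactly the same $i_k$, $b_k$, and $a_{k-1}$, since the leftmost point of $\bigcup_{j\notin\{i_1,\dots,i_{k-1}\}}Q_j$ is precisely $\min_{j\notin\{i_1,\dots,i_{k-1}\}}\min Q_j$. Your explicit justification of the pairwise disjointness of the $Q_i$ via $\Omega_{n-1}'$ is a welcome addition that the paper simply asserts.
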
 
\begin{proof} 
Since $Q_1,\ldots , Q_{n-1}$ are pairwise disjoint finite subsets of the interval $(0,1)$, the numbers $y_i=\min Q_i$, $1\le i\le n-1$ are pairwise distinct numbers. Hence, there exists a permutation $i_1,\ldots,i_{n-1}$ of $1,\ldots,n-1$ such that $y_{i_1}<\cdots<y_{i_{n-1}}$. Set $b_k=y_{i_k}\in Q_{i_k}$, $1\le k\le n-1$, therefore
$0\le b_1<b_2<\cdots<b_{n-1}<1$. Notice that 
\begin{equation}\label{simpm}
(0,b_{k})\cap \big(Q_{i_{k}}\cup\cdots\cup Q_{i_{n-1}}\big)=\emptyset\quad\textrm{for every}\quad
2\le k\le n-1.
\end{equation}
For every $2\le k\le n-1$, let $S_k=(0,b_k)\cap \big( Q_1\cup \cdots\cup Q_{n-1}\big)$. Since $b_{k-1}\in  (0,b_k)\cap Q_{i_{k-1}}$, we have that $S_k\neq\emptyset$. Set $a_{k-1}=\max S_k$, then $a_{k-1}<b_k$ and $(a_{k-1},b_k)\in \mathscr{P}$. 
By (\ref{simpm}), $a_{k-1}\in Q_{i_{1}}\cup\cdots\cup Q_{i_{k-1}}$, which concludes the proof.
\end{proof}

Using the permutation $i_1,\ldots,i_{n-1}$ defined in Lemma \ref{P2}, for simplicity, set $F'_k=F_{i_{k}}$ and $G'_k=G_{i_{k}}$, for $1\leq k \leq n-1$.

\begin{corollary}\label{P3}
Let $2\leq k  \leq n-1$, then there exist $1\leq j <k$ and $\ell\geq 0$  such that $($at least$)$ one of the following statements holds$:$\\
$(i)$ $f^{\ell}(F'_j)\subset F'_k \cup G'_k$ or $f^{\ell}(G'_j)\subset F'_{k} \cup G'_{k}$, \\
$(ii)$ $f^{\ell}(F'_k)\subset F'_{j} \cup G'_{j}$ or $f^{\ell}(G'_{k})\subset F'_j \cup G'_j$.
\end{corollary}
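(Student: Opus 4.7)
My plan is to show that Corollary \ref{P3} is essentially immediate from Lemma \ref{P1} once we feed it the interval produced by Lemma \ref{P2}. The construction in Lemma \ref{P2} was designed precisely so that each newly introduced $b_k \in Q_{i_k}$ sits on the right of an interval $(a_{k-1},b_k)\in\mathscr{P}$ whose left endpoint lies in the union $Q_{i_1}\cup\cdots\cup Q_{i_{k-1}}$, so the hypothesis of Lemma \ref{P1} is handed to us on a plate.

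More concretely, fix $2\le k\le n-1$ and invoke Lemma \ref{P2} to obtain the interval $(a_{k-1},b_k)\in\mathscr{P}$ with $b_k\in Q_{i_k}$ and $a_{k-1}\in Q_{i_1}\cup\cdots\cup Q_{i_{k-1}}$. Pick any index $j\in\{1,\ldots,k-1\}$ with $a_{k-1}\in Q_{i_j}$; such $j$ exists by Lemma \ref{P2}, and is moreover unique because the sets $Q_1,\ldots,Q_{n-1}$ are pairwise disjoint under the hypothesis $(x_1,\ldots,x_{n-1})\in\Omega_{n-1}'$. (Indeed, a common point of $Q_i$ and $Q_{j'}$ would yield, after iterating $f$ an appropriate number of times, an equality $x_{j'}=h(x_i)$ for some $h\in\mathscr{C}_\ell$ with $\ell\ge 1$, contradicting \eqref{ind}.) Since $i_1,\ldots,i_{n-1}$ is a permutation of $1,\ldots,n-1$ and $j\ne k$, we have $i_j\ne i_k$, so $(a_{k-1},b_k)\in\mathscr{P}$ with distinct endpoint-indices $i_j\in\{1,\ldots,n-1\}$ and $i_k\in\{1,\ldots,n-1\}$ meets the hypothesis of Lemma \ref{P1}.

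Applying Lemma \ref{P1} with $(i,j')=(i_j,i_k)$ then produces an integer $\ell\ge 0$ such that at least one of the inclusions
\[
f^\ell(F_{i_j})\subset F_{i_k}\cup G_{i_k},\quad f^\ell(G_{i_j})\subset F_{i_k}\cup G_{i_k},
\]
\[
f^\ell(F_{i_k})\subset F_{i_j}\cup G_{i_j},\quad f^\ell(G_{i_k})\subset F_{i_j}\cup G_{i_j}
\]
holds. Translating via $F'_j=F_{i_j}$, $G'_j=G_{i_j}$, $F'_k=F_{i_k}$, $G'_k=G_{i_k}$ delivers precisely the alternative in the statement of Corollary \ref{P3}. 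There is no real obstacle here; the whole point of ordering the minima $y_{i_1}<\cdots<y_{i_{n-1}}$ in the proof of Lemma \ref{P2} was to guarantee, inductively in $k$, an interval of $\mathscr{P}$ of the mixed form needed to apply Lemma \ref{P1}, and Corollary \ref{P3} is the direct readout of that pairing.
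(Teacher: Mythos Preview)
Your proof is correct and follows essentially the same route as the paper: invoke Lemma~\ref{P2} to obtain $(a_{k-1},b_k)\in\mathscr{P}$ with $a_{k-1}\in Q_{i_j}$ for some $1\le j<k$ and $b_k\in Q_{i_k}$, then feed this interval into Lemma~\ref{P1} and translate back via $F'_j=F_{i_j}$, $G'_j=G_{i_j}$. Your additional remark on the uniqueness of $j$ (via pairwise disjointness of the $Q_i$) is correct but not needed for the argument; existence alone suffices.
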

\begin{proof} Let $i_1,\ldots,i_{n-1}$ be the permutation  of $1,\ldots,n-1$ given by  Lemma \ref{P2},
then for every $2\le k\le n-1$, there exist $1\le j<k$ and $(a,b)\in\mathscr{P}$ with
$a\in Q_{i_j}$ and $b\in Q_{i_k}$. The interval $(a,b)$ fulfills the hypothesis of Lemma \ref{P1}. 
The proof is finished by making the following substitutions in the claim of Lemma \ref{P1}:
$i=i_j$, $j=i_k$, $F_i=F_{i_j}=F_j'$ and $F_j=F_{i_k}=F_k'$.
\end{proof}

Next we introduce an equivalence relation in the family of intervals $\mathscr{P'}$ listed as
$$ \mathscr{P}'=\{F_1,G_1,\ldots,F_{n-1},G_{n-1}\}=\{F_1',G_1',\ldots,F_{n-1}',G_{n-1}'\}.$$

\begin{definition}
Let  $C_1,C_2 \in  \mathscr{P}'$. We say that $C_1$ and $C_2$ are {\it equivalent} if  there exists $C \in  \mathscr{P}'$ such that $ f^{\ell}(C_1)\cup  f^{k}(C_2)\subset C$ for some  $\ell,k\geq 0$. If $C_1$ and $C_2$ are equivalent, we write $C_1 \equiv C_2$.
\end{definition}

\begin{lemma}\label{P4}
The relation  $\equiv$ is an equivalence relation with at most $n$ equivalence classes.
\end{lemma}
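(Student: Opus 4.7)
The plan is to verify the three axioms of an equivalence relation and then bound the number of classes inductively from Corollary~\ref{P3}. Reflexivity is immediate: for any $C\in\mathscr{P}'$, take $\ell=k=0$ with witness $C$ itself, so $f^{0}(C)\cup f^{0}(C)=C\subset C$. Symmetry is built into the definition, which is symmetric in $(C_{1},C_{2})$ and $(\ell,k)$.

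The substantive step is transitivity. Suppose $C_{1}\equiv C_{2}$ with witness $D\in\mathscr{P}'$ and exponents $\ell_{1},k_{1}\ge0$, so that $f^{\ell_{1}}(C_{1})\cup f^{k_{1}}(C_{2})\subset D$; and $C_{2}\equiv C_{3}$ with witness $D'\in\mathscr{P}'$ and exponents $\ell_{2},k_{2}\ge0$, so that $f^{\ell_{2}}(C_{2})\cup f^{k_{2}}(C_{3})\subset D'$. The idea is to transport one witness onto the other by iterating $f$. Assume without loss of generality that $k_{1}\le\ell_{2}$. Then
$$f^{\ell_{2}-k_{1}}\bigl(f^{k_{1}}(C_{2})\bigr)=f^{\ell_{2}}(C_{2})\subset D'.$$
By Lemma~\ref{tmt} (iterated), $f^{\ell_{2}-k_{1}}(D)$ is contained in some $E\in\mathscr{P}$; since $E$ meets $D'$ (both contain the nonempty set $f^{\ell_{2}}(C_{2})$) and distinct elements of $\mathscr{P}$ are disjoint, $E=D'$. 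Hence
$$f^{\ell_{1}+(\ell_{2}-k_{1})}(C_{1})\subset f^{\ell_{2}-k_{1}}(D)\subset D',$$
which together with $f^{k_{2}}(C_{3})\subset D'$ shows $C_{1}\equiv C_{3}$ with witness $D'\in\mathscr{P}'$. The case $k_{1}\ge\ell_{2}$ is symmetric and yields witness $D$. The main obstacle lies precisely here: one must keep the witness inside $\mathscr{P}'$ after the transport, and this is what the invariance of $\mathscr{P}$ provided by Lemma~\ref{tmt} guarantees.

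For the upper bound on the number of classes, I will count inductively. Using the permutation from Lemma~\ref{P2}, set $\mathcal{A}_{k}=\bigcup_{i=1}^{k}\{F'_{i},G'_{i}\}$ and let $N_{k}$ be the number of distinct $\equiv$-classes in $\mathscr{P}'$ that contain at least one element of $\mathcal{A}_{k}$. Clearly $N_{1}\le2$. For $2\le k\le n-1$, Corollary~\ref{P3} produces $j<k$ and $\ell\ge0$ for which one of $f^{\ell}(F'_{j})$, $f^{\ell}(G'_{j})$ is contained in $F'_{k}\cup G'_{k}$ (or the symmetric statement). Because $F'_{k}$ and $G'_{k}$ are disjoint elements of $\mathscr{P}$ and the $f^{\ell}$-image of an element of $\mathscr{P}$ lies in a single element of $\mathscr{P}$ (remark before Lemma~\ref{P1}), such a containment collapses to $f^{\ell}(\cdot)\subset F'_{k}$ or $f^{\ell}(\cdot)\subset G'_{k}$. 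Either way, at least one member of $\{F'_{k},G'_{k}\}$ is $\equiv$-equivalent to some member of $\{F'_{j},G'_{j}\}\subset\mathcal{A}_{k-1}$, so that pair contributes at most one new class. Therefore $N_{k}\le N_{k-1}+1$, whence by induction $N_{n-1}\le 2+(n-2)=n$. Since $\mathscr{P}'=\mathcal{A}_{n-1}$, this gives at most $n$ equivalence classes, as required.
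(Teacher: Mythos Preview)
Your proof is correct and follows essentially the same approach as the paper: you verify the axioms of an equivalence relation (with transitivity established by pushing one witness forward via Lemma~\ref{tmt} until it lands in the other), and then bound the number of classes by the same induction based on Corollary~\ref{P3}, showing that each new pair $\{F_k',G_k'\}$ adds at most one class. The only cosmetic difference is that the paper pivots the transitivity argument on the common element $C_1$ in $C_1\equiv C_2$, $C_1\equiv C_3$, whereas you pivot on the middle element $C_2$ in $C_1\equiv C_2$, $C_2\equiv C_3$; the mechanism is identical.
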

\begin{proof}
It is clear that $\equiv$ is reflexive and symmetric. To prove that $\equiv$ is transitive, let 
$C_1,C_2,C_3 \in  \mathscr{P}'$ with $C_1\equiv C_2$ and $C_1\equiv C_3$. We will prove that $C_3\equiv C_2$.

There exist  $C,C' \in  \mathscr{P}'$ such that   $ f^{\ell}(C_1)\cup  f^k(C_2)\subset C$  and 
 $ f^{p}(C_1)\cup  f^q(C_3)\subset C'$ for some $\ell,k,p,q\geq 0$.
If $\ell \ge p$, then  $f^{\ell-p}(C') \subset C$, which means that $f^{q+\ell-p}(C_3) \subset C$ implying that  $C_3\equiv C_2$. Otherwise $\ell < p$,  then  $f^{p-\ell}(C) \subset C'$, which means that $f^{k+p-\ell}(C_2) \subset C'$ implying that  $C_3\equiv C_2$. We have proved that $\equiv$ is an equivalence relation

Denote by $[C]$ the equivalence class of the interval $C\in \mathscr{P}'$. Now we will prove that $\equiv$ has at most $n$ equivalence classes.

For each $1\le k \le n-1$, let $m_{k}\ge 1$ denote the number of pairwise distinct terms in the sequence
$[F_1'], [G_1'],\ldots, [F_{k}'],[G_{k}']$.
We have that $m_1\le 2$. By Corollary \ref{P3}, for each $2\le k \le n-1$, there exist $C_1\in \{F_1',G_1',\ldots,F_{k-1}',G_{k-1}'\}$ and $C_2\in \{F_{k}',G_{k}'\}$ such that $C_1\equiv C_2$. Hence, $m_{k}\le m_{k-1}+1$ for every $2\le k\le n-1$. By induction, $m_{k}\le k+1$ for every $1\le k \le n-1$.
The proof is finished by taking $k=n-1$.
\end{proof}

\noindent
{\bf Proof of Theorem \ref{IP1}}.
 The fact that $(x_1,\ldots,x_{n-1})\in \Omega'_{n-1}$ implies that the periodic orbits of $f$ are entirely contained in the union of the intervals of the quasi-partition $\mathscr{P}$. Moreover, each interval of $\mathscr{P}$ intersects at most one periodic orbit of $f$. By Lemma \ref{P0}, every orbit of $f$ intersects an interval of $\mathscr{P}'$.  The intervals of $\mathscr{P}'$ that intersect the same periodic orbit of $f$ belong to the same equivalence class. In this way, there exists an injective map that assigns to each periodic orbit of $f$ an equivalence class. By Lemma \ref{P4}, the number of equivalence classes  is at most $n$. As a result, the number of periodic orbits of $f$ is at most $n$. 
\cqd

\section{Proof of Theorem \ref{main}}\label{proofofmain}

We begin with a lemma which will be used to prove Theorem \ref{main} in the case $I=\R$.

\begin{lemma}\label{-rr} Let  $\phi_i:\R\to\R$, $1\le i\le n$, be $\rho$-Lipschitz contractions. Then there exists $r_0=r_0(\phi_1,\ldots,\phi_n)>0$ such that for every $r\ge r_0$, the following holds:
\begin{itemize}
\item [$(i)$] $\phi_i\left( [-r,r]\right)\subset (-r,r)$ for every $1\le i\le n$;
\item [$(ii)$] For every $x\in\R$, there exists $k=k(x)\ge 0$ such that $\vert h(x) \vert<r$, for every $h\in\mathscr{C}_k$.
\end{itemize}
\end{lemma}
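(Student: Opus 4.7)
The plan is to exploit the contraction inequality $|\phi_i(x)-\phi_i(0)|\le\rho|x|$, which yields a uniform \emph{a priori} bound on the compositions in $\mathscr{C}_k$ that works simultaneously for all elements of $\mathscr{C}_k$. Set
\[
M=\max_{1\le i\le n}|\phi_i(0)|,
\]
so that $|\phi_i(x)|\le \rho|x|+M$ for every $x\in\R$ and every $1\le i\le n$. I will then choose $r_0$ to be any number strictly larger than $M/(1-\rho)$ (for definiteness, $r_0=M/(1-\rho)+1$).

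For part $(i)$, suppose $r\ge r_0$ and $x\in[-r,r]$. Then $|\phi_i(x)|\le \rho r+M$, and the inequality $r>M/(1-\rho)$ is precisely equivalent to $\rho r+M<r$. Hence $\phi_i([-r,r])\subset(-r,r)$, for every $1\le i\le n$.

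For part $(ii)$, I would first establish by induction on $k$ that for every $h\in\mathscr{C}_k$ and every $x\in\R$,
\[
|h(x)|\le \rho^{k}|x|+M(1+\rho+\cdots+\rho^{k-1})\le \rho^{k}|x|+\frac{M}{1-\rho}.
\]
The inductive step is immediate: writing $h=\phi_i\circ h'$ with $h'\in\mathscr{C}_{k-1}$ and using $|\phi_i(y)|\le\rho|y|+M$ with $y=h'(x)$ gives the bound. Crucially, the right-hand side depends on $k$ only, not on the particular composition $h$. Given $x\in\R$ and $r\ge r_0$, the quantity $r-M/(1-\rho)$ is strictly positive, so one can choose $k=k(x)$ large enough that $\rho^{k}|x|<r-M/(1-\rho)$, which yields $|h(x)|<r$ simultaneously for every $h\in\mathscr{C}_k$.

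I do not anticipate a genuine obstacle: the only delicate point is recognising that the bound $|h(x)|\le \rho^{k}|x|+M/(1-\rho)$ is \emph{uniform} over $\mathscr{C}_k$, which is what makes the single exponent $k=k(x)$ work for every $h\in\mathscr{C}_k$ in $(ii)$. Minor care is needed in the degenerate cases $\rho=0$ (take $k=1$) and $x=0$ (take $k=0$), but these are immediate.
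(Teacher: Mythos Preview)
Your proof is correct and follows essentially the same approach as the paper: both set $M=\max_i|\phi_i(0)|$, use the basic estimate $|\phi_i(x)|\le\rho|x|+M$, iterate it to get $|h(x)|\le\rho^k|x|+M/(1-\rho)$ uniformly over $h\in\mathscr{C}_k$, and pick $r_0$ strictly larger than $M/(1-\rho)$ (the paper uses $r_0=2M/(1-\rho)$, you use $M/(1-\rho)+1$, a cosmetic difference).
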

\begin{proof} 
Let $c=\max_i \vert \phi_i(0)\vert$ and $1\le i\le n$, then, for every $x\in\R$, the following holds
$$
\vert \phi_i(x)\vert\le\vert \phi_i(x)-\phi_i(0)\vert+\vert\phi_i(0)\vert\le \rho \vert x\vert+ c.
\eqno{(12)}
$$
Set $r_0:=2c/(1-\rho)$ and let $r\ge r_0$. Note that 
$$
\vert x\vert \le r\implies \vert \phi_i(x)\vert\le \rho \vert x\vert+c\le \rho r +\frac{(1-\rho) r}{2}<r
$$
which proves $(i)$.

By $(12)$, let $h=\phi_{i_1}\circ \phi_{i_2}\circ \cdots\circ \phi_{i_k} \in\mathscr{C}_k$, where $1\le i_1,i_2,\ldots,i_k \le n$. We have that
$$ 
\vert h(x)\vert=\vert \phi_{i_1}\circ \phi_{i_2}\circ \cdots\circ \phi_{i_k}( x)\vert\le \rho^k\vert x\vert+(\rho^{k-1}+\ldots+\rho +1)c\le \rho^k\vert x\vert + \dfrac{c}{1-\rho}\le \rho^k \vert x\vert +\dfrac{r}{2}.
$$
Given $x\in\R$, let $k$ be so large that $\rho^k \vert x\vert<r/2$, then $\vert h(x) \vert<r$ which proves $(ii)$.
\end{proof}

\begin{proof}[Proof of Theorem \ref{main}] It follows straightforwardly from Theorems \ref{toconclude2} and  \ref{IP1} that Theorem \ref{main} holds for $I=[0,1)$, thus the same claim holds if instead of $[0,1)$ we consider any bounded real interval of the form $[a,b)$.

Now we consider the case $I=\R$. Let $\phi_1,\ldots,\phi_n:\R\to\R$ be $\rho$-Lipschitz contractions and $r_0=r_0(\phi_1,\ldots,\phi_n)>0$ be given by Lemma \ref{-rr}. 
For every integer $k\ge 0$, set  $I_k=[-(r_{0}+k),r_{0}+k)$.
By the item $(i)$ of Lemma \ref{-rr} , $\phi_i\left(\overline{I}_k\right)\subset \mathring{I_k}$ for every $1\le i\le n$. 
Let  $\phi_i^{(k)}:=\phi_i\vert_{\overline{I}_k}$, then $\{\phi_1^{(k)},\ldots,\phi_n^{(k)}\}$ is an IFS consisting of $\rho$-Lipschitz contractions defined on $\overline{I}_k$. 
Hence, by the first part of the proof, there exists a full subset $V_k$ of $I_k$ such that, for every
$(x_1,\ldots,x_{n-1})\in \Omega_{n-1}(I_k)\cap (V_k)^{n-1}$,  the map $f_{\phi_1^{(k)},\ldots,\phi_n^{(k)},x_1,\ldots,x_{n-1}}:I_k\to\mathring{I_k}$ is asymptotically periodic and has at most $n$ periodic orbits. 
By the items $(i)$ and $(ii)$  of Lemma \ref{-rr}, for every $k\ge 1$,
the maps $f_{\phi_1,\ldots,\phi_n,x_1,\ldots,x_{n-1}}:\R\to\R$ and 
$f_{\phi_1^{(k)},\ldots,\phi_n^{(k)},x_1,\ldots,x_{n-1}}:I_k\to\mathring{I_k}$
have the same asymptotic limits. Therefore $f_{\phi_1,\ldots,\phi_n,x_1,\ldots,x_{n-1}}$
 is also asymptotically periodic and has at most $n$ periodic orbits. 
 
 To conclude the proof, let $W_k= (-\infty, -(r_{0}+k))\cup V_k \cup ( r_{0}+k,\infty)$. Therefore $W_k$ is a full subset of $\R$ and the denumerable intersection
 $$
 W_{\Phi}= \cap_{k\ge 1} W_k
 $$
 is also a full subset of $\R$. Let $(x_1,\ldots,x_{n-1})\in \Omega_{n-1}(\R)\cap (W_{\Phi})^{n-1}$ and
  $k$ be an integer larger than $\max\{\vert x_1 \vert, \vert x_{n-1} \vert \}$. Thus, the point $(x_1,\ldots,x_{n-1})$ also belongs to the set $ \Omega_{n-1}(I_k)\cap (V_k)^{n-1}$, implying that 
 the map  $f_{\phi_1,\ldots,\phi_n,x_1,\ldots,x_{n-1}}$
 is  asymptotically periodic and has at most $n$ periodic orbits.
 This concludes the proof of the theorem.

 \end{proof}
 
 \section{Proof of Theorem \ref{abcd}}\label{proofofabcd}
 
 Throughout this section, let $-1<\lambda<1$ and
 $\phi_1,\ldots,\phi_n:\R\to\R$ be $\lambda$-affine maps  defined by
 $\phi_i(x)=\lambda x + b_i$, where  $b_1,\ldots,b_n\in\R$. Hereafter, to avoid misunderstanding,  
 whenever a piecewise $\lambda$-affine map is defined on the whole line, we use the notation ${\bar{f}}_{\phi_1,\ldots,\phi_n,x_1,\ldots,x_{n-1}}$ in  place of ${f}_{\phi_1,\ldots,\phi_n,x_1,\ldots,x_{n-1}}$.
 
\begin{lemma}[Reduction Lemma]\label{redl} Let $(c_1,\ldots,c_{n-1})\in\Omega_{n-1}(\R)$ and $\bar{f}={\bar f}_{\phi_1,\ldots,\phi_n,c_1,\ldots,c_{n-1}}$. Then, for every $\delta\in\R$, the map ${\bar{f}}_{\delta}:\R\to\R$ defined by 
${\bar f}_{\delta}={\bar f}+\delta$
   is topologically conjugate to the map ${\bar g}={\bar f}_{\phi_1,\ldots,\phi_n,x_1,\ldots,x_{n-1}}$, where 
   $x_i=c_i-\delta/(1-\lambda)$ for every $1\le i\le n-1$. 
   \end{lemma}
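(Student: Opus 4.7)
The plan is to exhibit an explicit conjugating homeomorphism and verify the conjugacy equation piece by piece. Heuristically, for a single affine map $\phi(x)=\lambda x+b$, the shifted map $\phi(x)+\delta=\lambda x+(b+\delta)$ has fixed point translated by $\delta/(1-\lambda)$; the translation $x\mapsto x-\delta/(1-\lambda)$ carries the new fixed point to the old and conjugates $\phi+\delta$ to $\phi$. Because every piece of $\bar f$ shares the \emph{same} slope $\lambda$, one and the same translation handles all $n$ branches at once; this is precisely why the argument demands constant slope, as remarked after Theorem \ref{abcd}.

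Concretely, I would set
\[
h:\R\to\R,\qquad h(x)=x-\tfrac{\delta}{1-\lambda},
\]
which is clearly a homeomorphism with inverse $h^{-1}(y)=y+\delta/(1-\lambda)$. First I would observe that $h$ sends the breakpoints of $\bar f_{\delta}$ to the breakpoints of $\bar g$: indeed $h(c_i)=c_i-\delta/(1-\lambda)=x_i$ for every $1\le i\le n-1$. Thus $h$ restricts to an order-preserving bijection from $[c_{i-1},c_i)$ onto $[x_{i-1},x_i)$ for each $1\le i\le n$ (with the conventions $c_0=x_0=-\infty$ and $c_n=x_n=+\infty$), so the piecewise structures correspond.

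Finally, I would verify the conjugacy equation $h\circ\bar f_{\delta}=\bar g\circ h$ on each piece. For $x\in[c_{i-1},c_i)$, one has $\bar f_{\delta}(x)=\lambda x+b_i+\delta$ and $h(x)\in[x_{i-1},x_i)$, so
\[
h\bigl(\bar f_{\delta}(x)\bigr)=\lambda x+b_i+\delta-\tfrac{\delta}{1-\lambda}=\lambda x+b_i-\tfrac{\lambda\delta}{1-\lambda},
\]
while
\[
\bar g\bigl(h(x)\bigr)=\lambda\Bigl(x-\tfrac{\delta}{1-\lambda}\Bigr)+b_i=\lambda x+b_i-\tfrac{\lambda\delta}{1-\lambda}.
\]
The two expressions agree, so $h$ conjugates $\bar f_{\delta}$ to $\bar g$ on every piece, hence globally on $\R$. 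No step presents any real obstacle; the only care needed is to match left/right endpoints of the half-open intervals correctly and to keep track of the identity $\delta-\delta/(1-\lambda)=-\lambda\delta/(1-\lambda)$.
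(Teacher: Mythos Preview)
Your proof is correct and follows essentially the same approach as the paper: both exhibit the conjugacy via the translation by $\delta/(1-\lambda)$ and verify the intertwining relation branch by branch using the identity $\delta-\delta/(1-\lambda)=-\lambda\delta/(1-\lambda)$. The only cosmetic difference is that the paper works with $h(x)=x+\delta/(1-\lambda)$ and checks $h\circ\bar g=\bar f_\delta\circ h$, whereas you use the inverse translation and check the relation in the opposite direction.
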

 \begin{proof} 
 Let $\delta\in\R$ be fixed. Set $x_i=c_i-\delta/(1-\lambda)$ and
  ${\bar g}={\bar f}_{\phi_1,\ldots,\phi_n,x_1,\ldots,x_{n-1}}$. 
 Let $h:\R\to\R$ be defined by $h(x)=x+\delta/(1-\lambda)$. 
 We claim that $h\circ {\bar g}={\bar f}_{\delta}\circ h$. To show this, let $I_1=(-\infty,x_1), I_n=[x_{n-1},\infty)$ and
 $I_j=[x_{j-1},x_j)$, $2\le j\le n-1$. 
 By the definition of $h$ and $x_i$, we have that $h(I_1)=(-\infty,c_1)$, $h(I_n)=[c_{n-1},\infty)$ and $h(I_j)=[c_{j-1},c_j)$ for every $2\le j \le n-1$. Moreover, by $(\ref{mapf})$,
  for every $x\in I_i$, $1\le i \le n$, we have that 
 $$h\left({\bar g}(x)\right)=h\left(\phi_i(x)\right)=\phi_i(x)+\dfrac{\lambda\delta}{1-\lambda}+\delta=\phi_i\left(x+\dfrac{\delta}{1-\lambda} \right)+\delta=\phi_i\left(h(x)\right)+\delta={\bar f}_{\delta}\left(h(x)\right).$$
This proves the claim.
\end{proof}
      
\begin{proof}[Proof of Theorem \ref{abcd}] Let $I=[0,1)$ and $f:I\to \R$ be an $n$-interval piecewise $\lambda$-affine contraction, then there exist $\lambda$-affine contractions $\phi_1,\ldots,\phi_n:\R\to\R$ and points
$0=d_0<d_1<\ldots <d_{n-1}<d_n=1$ such that
      $f(x)=\phi_i(x)$ for every $x\in [d_{i-1},d_i)$. Let $F$ be the denumerable set 
      $$  F=\bigcup_{i=1}^n\bigcup_{j=0}^1\left\{\delta\in\R: \phi_i(d_{i-j})+\delta=0\,({\rm mod}\,1)\right\},$$
      then for each $\delta_0\in \R\setminus F$, there exist $\epsilon>0$, $n\le m\le 2n$ and $(c_1,\ldots,c_{m-1})\in \Omega_{m-1}(I)$ such that 
      \begin{equation}\label{v4}
      f_{\delta}=f+\delta\,({\rm mod}\, 1)=f_{\phi_1,\ldots,\phi_{m},c_1,\ldots,c_{m-1}}+\delta\quad\textrm{for every}\quad \delta\in (\delta_0-\epsilon,\delta_0+\epsilon).
  \end{equation}    
       Let $\bar{f}=\bar{f}_{\phi_1,\ldots,\phi_m,c_1,\ldots,c_{m-1}}:\R\to\R$. 
By Lemma \ref{redl}, for every $\delta\in\R$, the map ${\bar f}_\delta=\overline{f}+\delta$ is topologically conjugate to the map ${\bar f}_{\phi_1,\ldots,\phi_m,x_1,\ldots,x_{m-1}}:\R\to\R$, where $x_i=c_i-\delta/(1-\lambda)$ for every $1\le i\le m-1$. Let $W_{\Phi}$ be the full set given by Theorem \ref{main}. Let $\delta$ belong to the full set $\bigcap_{i=1}^n  (1-\lambda)\left(c_i-W_{\Phi}\right)$, then $(x_1,\ldots,x_{m-1})=\left(c_1-\dfrac{\delta}{1-\lambda},\ldots,c_{m-1}-\dfrac{\delta}{1-\lambda} \right)\in \Omega_{m-1}(\R)\cap W_{\Phi}^{m-1}$, therefore, by Theroem \ref{main}, the map
   ${\bar f}_{\phi_1,\ldots,\phi_m,x_1,\ldots,x_{m-1}}$ is asymptotically periodic and has at most $m\le 2n$ periodic orbits. 
  The map $\bar{f}_{\delta}$ inherits from ${\bar f}_{\phi_1,\ldots,\phi_m,x_1,\ldots,x_{m-1}}$ the same asymptotic properties.  
By $(\ref{v4})$, $f_{\delta}(x)={\bar f}_\delta(x)$ for every $x\in I$ and  $\delta\in (\delta_0-\epsilon,\delta_0+\epsilon)$.   
In this way, $f_{\delta}$ is asymptotically periodic and has  at most $m\le 2n$ periodic orbits for almost every $\delta\in (\delta_0-\epsilon,\delta_0+\epsilon)$.  
\end{proof}

\end{document}